\newtheorem{theorem}{Theorem}[section]
\newtheorem{thm}[theorem]{Theorem}
\newtheorem{lemma}[theorem]{Lemma}
\newtheorem{lem}[theorem]{Lemma}
\newtheorem{cor}[theorem]{Corollary}
\newtheorem{proposition}[theorem]{Proposition}
\newtheorem{prp}[theorem]{Proposition}
\newtheorem{observation}[theorem]{Observation}
\newtheorem{maintheorem}{Theorem}
\def\N{\mathbb{N}}
\def\P{\mathbb{P}}
\def\Z{\mathbb{Z}}
\def\R{\mathbb{R}}
\def\E{\mathbb{E}}
\newcommand{\cP}{\mathcal{P}}
\newcommand{\ce}{\mathcal{E}}
\newcommand{\cg}{\mathcal{G}}
\newcommand{\cU}{\mathcal{U}}
\newcommand{\cL}{\mathcal{L}}
\newcommand{\tr}{\mathrm{tr}}
\newcommand{\e}{\varepsilon}
\newcommand{\bn}{\mathbf{n}}
\begin{document}

\title[Large Deviation for Polymers]{Connecting Eigenvalue Rigidity with Polymer Geometry: Diffusive Transversal Fluctuations under Large Deviation}
\date{}

\author[R. Basu]{Riddhipratim Basu}
\address{R. Basu\\
  International Centre for Theoretical Sciences\\
  Tata Institute of Fundamental Research\\
  Bangalore, India
  }
  \email{rbasu@icts.res.in}

\author[S. Ganguly]{Shirshendu Ganguly}
\address{S. Ganguly\\
  Department of Statistics\\
 U.C. Berkeley \\
  Evans Hall \\
  Berkeley, CA, 94720-3840 \\
  U.S.A.}
  \email{sganguly@berkeley.edu}

\maketitle
\begin{abstract}
We consider the exactly solvable model of exponential directed last passage percolation on $\Z^2$ in the large deviation regime. Conditional on the upper tail large deviation event $\cU_{\delta}:=\{T_{n}\geq (4+\delta)n\}$ where $T_{n}$ denotes the last passage time from $(1,1)$ to $(n,n)$, we study the geometry of the polymer/geodesic $\Gamma_{n}$, i.e., the optimal path attaining $T_{n}$. We show that conditioning on $\cU_{\delta}$ changes the transversal fluctuation exponent from the characteristic $2/3$ of the KPZ universality class to $1/2$, i.e., conditionally, the smallest strip around the diagonal that contains $\Gamma_{n}$ has width $n^{1/2+o(1)}$ with high probability. This sharpens a result of Deuschel and Zeitouni (1999) \cite{DZ1} who proved a $o(n)$ bound on the transversal fluctuation in the context of Poissonian last passage percolation, and complements \cite{BGS17A}, where the transversal fluctuation was shown to be $\Theta(n)$ in the lower tail large deviation event. Our proof exploits the correspondence between last passage times in the exponential LPP model and the largest eigenvalue of the Laguerre Unitary Ensemble (LUE) together with the determinantal structure of the spectrum of the latter. A key ingredient in our proof is a sharp refinement of the large deviation result for the largest eigenvalue \cite{Jo99, sep98}, using rigidity properties of the spectrum, which could be of independent interest.\end{abstract}
\tableofcontents
\section{Introduction}
Last passage percolation (LPP) models on $\Z^2$ are paradigm examples of models believed to exhibit the features of the Kardar-Parisi-Zhang universality class. In such models, one studies the weight and geometry of the maximum weight directed path (called henceforth a polymer or a geodesic) between two far away points in a field of i.i.d.\ weights on vertices of $\Z^2$. Starting with the breakthrough work of Baik, Deift and Johansson \cite{BDJ99}, last two decades have seen  an explosion of results in studying the so-called exactly solvable models in this class; where using some remarkable bijections leading to exact formulae have been used to obtain the typical longitudinal and transversal fluctuation exponents of $1/3$ and $2/3$, and also to establish universal scaling limits that appear in random matrix theory. 
Historically, progress was made on understanding the large deviation of the maximal weight  (henceforth called last passage time) around the late nineties \cite{VerKer77, LogShep77, DZ2, sepLDP} before the typical fluctuation was rigorously understood. However, finer results about how the geometry of the polymer changes under the large deviation regime was not available until very recently. {With }Allan Sly the authors  recently showed in \cite{BGS17A} that for a large class of models, conditioning on the lower tail large deviation, i.e., on the event that the last passage time is macroscopically smaller than its typical value (see precise definitions below) the transversal fluctuation exponent changes to $1$; that this the polymer from $\mathbf{1}$ to $\mathbf{n}$ (for $r\in \Z$, the point $(r,r)$ will be denoted by $\mathbf{r}$) is unlikely to be contained in any strip of width $o(n)$. In this paper we continue the program of understanding the geometry of polymers under large deviation and focus on upper tail large deviation regime; i.e., the event where the last passage time is macroscopically larger than typical. One might heuristically expect that conditioning on the upper tail large deviation event will reduce the transversal fluctuation exponent (from the typical value $2/3$). For the exactly solvable model of LPP with i.i.d.\ exponential passage times, we show that it is indeed the case, and obtain the exact value of the transversal fluctuation exponent to be $1/2$. 

Unlike the approach in \cite{BGS17A}, our approach here crucially uses the exactly solvable nature of exponential LPP. Indeed, our proof is based on the fact that the last passage time between two vertices in exponential LPP has the same distribution as the largest eigenvalue of a certain Wishart matrix, whose eigenvalues form a determinantal point process (LUE). Using recent breakthroughs in understanding of the rigidity of eigenvalues we obtain sharp asymptotics of large deviation probabilities for the largest eigenvalue of LUE, improving earlier results. This in turn, together with the above correspondence leads to sharp estimates on the transversal fluctuation of the polymer under upper tail large deviation. We now move towards precise definition and statement of main results.

\subsection{Definitions and Main Results}\label{setting}

We consider the following last passage percolation (LPP) model on $\Z^2$: let $\{X_{v}:v\in \Z^2\}$ denote a field of i.i.d.\ $\mbox{Exp}(1)$ random variables. For any up/right path $\gamma$, the weight of the path, denoted $\ell(\gamma)$ is given by the sum of the weights on $\gamma$, i.e., 
$$\ell (\gamma):= \sum_{v\in \gamma} X_{v}.$$ 
Let $\preceq$ denote the usual partial order on $\Z^2$, i.e., $u\preceq v$ if $u$ is coordinate-wise smaller than $v$. For any two points $u$ and $v$ with $u\preceq v$, the last passage time from $u$ to $v$, denoted $T_{u,v}$ is defined by
$$T_{u,v}:=\max_{\gamma} \ell(\gamma)$$
where the maximum is taken over all up/right paths $\gamma$ from $u$ to $v$. The almost surely unique maximizing path, denoted $\Gamma_{u,v}$, will be called a polymer or a geodesic. 
As already mentioned above, it is a paradigm example of an exactly solvable growth model in the KPZ universality class, and as such has been subject to extensive study. For notational convenience let us denote $T_{\mathbf{1}, \mathbf{n}}$ by $T_{n}$, and the corresponding geodesic by $\Gamma=\Gamma_{n}$. Using connection between exponential LPP and Totally Asymmetric Simple Exclusion Process (TASEP) and the characterization of the invariant measures for the latter, Rost \cite{Ro81} showed that $T_{n}\sim 4n$, and it was shown by Johansson \cite{Jo99} that $n^{-1/3}(T_{n}-4n)$ converges weakly to a scalar multiple of the GUE Tracy-Widom distribution. 

Together with the weight of the polymer, one is also interested in studying its geometry, in particular, how closely the polymer sticks to the straight line joining the endpoints of the path. This is typically measured via the transversal fluctuation of the polymer $\Gamma_{n}$, defined as follows. For $t=0,1,2,\ldots 2n$, let $\Gamma_{n}(t)=(x(t), y(t))$ denote the (unique) vertex of $\Gamma_{n}$ that lies on the anti-diagonal $\{x+y=t\}$. Transversal fluctuation of $\Gamma_{n}$ at $t$, denoted $D_{n}(t)$ is define by
\begin{equation}\label{transversal1}
D_{n}(t):=|x(t)-y(t)|
\end{equation}
and the transversal fluctuation of $\Gamma_{n}$, denoted $D_{n}$ is defined by $D_n:=\max_{t} D_n(t)$. It is well-known \cite{J00, BCS06} that $D_n = n^{2/3+o(1)}$ (see also \cite{BSS14, BGH17, BG18} for more quantitative results), $2/3$ being the characteristic transversal fluctuation exponent for polymers in the KPZ universality class. 
Our focus in this paper is to study how the transversal fluctuation exponent changes in the large deviation regime for $T_{n}$, i.e., when $T_{n}$ is macroscopically smaller or larger compared to the typical value. More precisely, for $\delta>0$, let $\cU_{\delta}=\cU_{\delta}(n)$ denote the upper tail large deviation event
$$\cU_{\delta}(n):=\{T_n\geq (4+\delta)n\}.$$ 
Similarly, for $\delta\in (0,4)$, we define the lower tail large deviation event 
$$\cL_{\delta}(n):=\{T_n\leq (4-\delta)n\}.$$
It is known since the work of Kesten \cite{Kes86} that the large deviation speed for the upper tail is $n$ whereas that of the lower tail is $n^2$. Johansson \cite{Jo99} showed that log-probabilities $\log \P(\cU_{\delta})$ and $\log \P(\cL_{\delta})$ scaled by $n$ and $n^2$ converge to explicit large deviation rate functions.  The upper tail large deviation was also established in \cite{sepLDP}. 

As mentioned above, we are interested in studying the distribution of $D_{n}$ conditional on the large deviation events $\cL_{\delta}$ or $\cU_{\delta}$. In a recent work \cite{BGS17A}, with Allan Sly, we showed that conditional on $\cL_{\delta}$ the geodesic $\Gamma$ is delocalized with high probability, i.e., the transversal fluctuation exponent is $1$. The argument there was based on the fact that the speed of large deviation for the lower tail is $n^2$ and did not use integrability of the model, and hence could also be extended to a large class of other LPP models. For the upper tail large deviation, in the related exactly solvable model of Poissonian LPP \cite{DZ1} showed that the transversal fluctuation in $o(n)$ with high probability conditional on the upper tail large deviation regime, but no finer information was obtained. {However, heuristically one would believe that the optimal way of achieving the upper tail large deviation event is to increase the passage times near the diagonal, and hence the transversal fluctuation exponent should not exceed  $2/3$ conditional on $\cU_{\delta}$}. Our main result in this paper is to identify the transversal fluctuation exponent in the upper tail large deviation regime. We now move towards a precise statement. 

There are two usual ways to define a transversal fluctuation exponent, an upper exponent and a lower exponent which often coincide. For a fixed $\delta>0$, let us define the upper transversal fluctuation exponent $\bar{\xi}=\bar{\xi_{\delta}}$ under the large deviation event $\cU_{\delta}$ as follows: 

\begin{equation}\label{utf}
 \bar{\xi}:=\inf\{\xi'\leq 1: \limsup_{n\to \infty} \P(D_{n}\geq n^{\xi'}\mid \cU_{\delta})=0\}.
 \end{equation}
Similarly we define the lower transversal fluctuation exponent $\underline{\xi}=\underline{\xi_{\delta}}$ under the large deviation event $\cU_{\delta}$ as follows: 
\begin{equation}\label{ltf}
\underline{\xi}:=\sup\{\xi'\geq 0: \liminf_{n\to \infty} \P(D_{n}\geq n^{\xi'}\mid \cU_{\delta})=1\}.
\end{equation}

If $\bar{\xi}=\underline{\xi}$, we say that the transversal fluctuation exponent under $\cU_{\delta}$ exists and denote the common value by $\xi=\xi_{\delta}$. The following main theorem in this paper shows that $\xi_{\delta}$ exists and is independent of $\delta$. 
\begin{figure}[h]
\centering
\includegraphics[scale=.6]{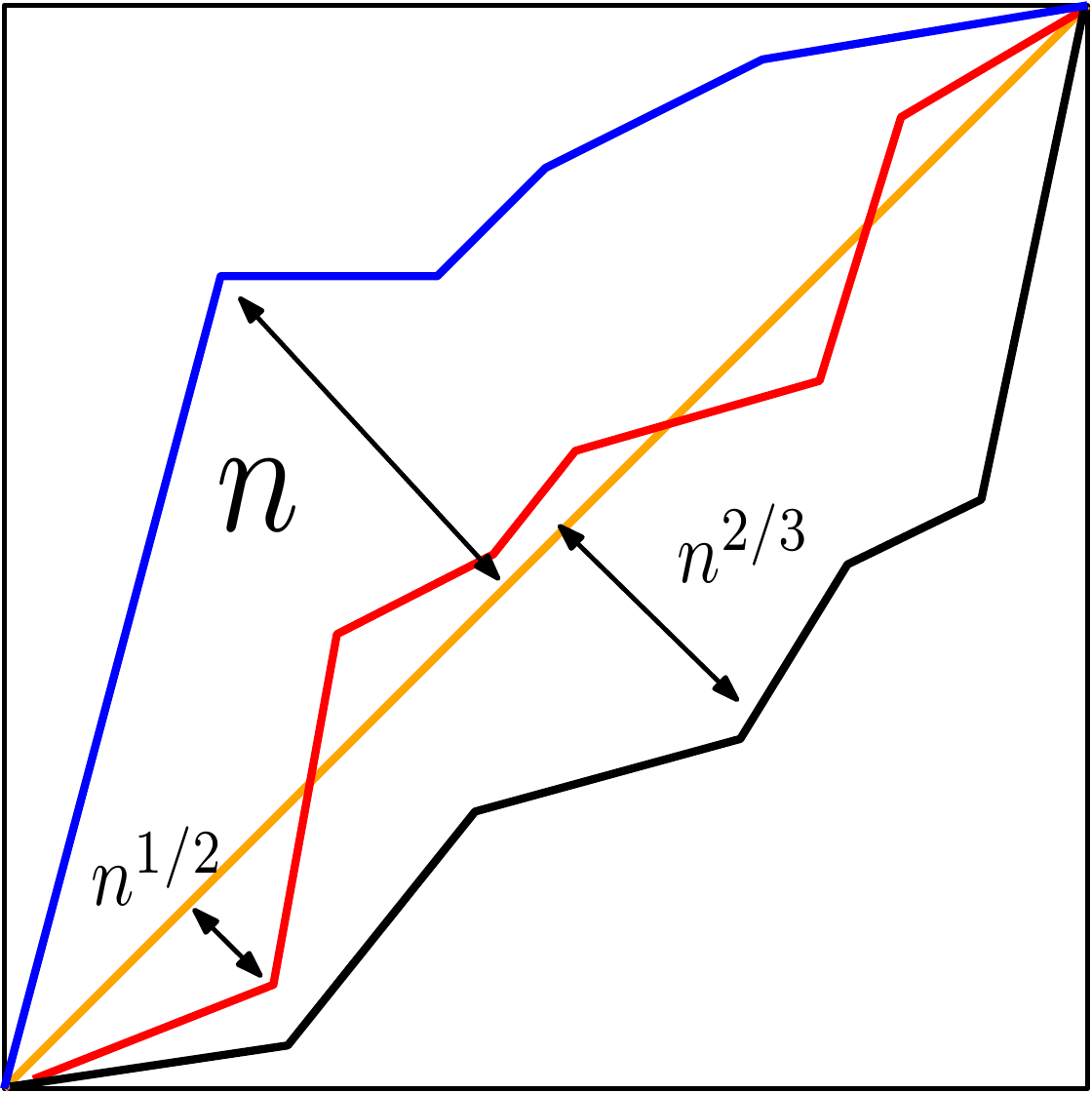}
\caption{The blue, red and black paths denote the polymers in the lower tail, upper tail, and typical conditions. The transversal fluctuation exponents in the three regimes are $1, 1/2$ and $2/3$ respectively. 
}
\end{figure}
\begin{maintheorem}
\label{t:main}
For each $\delta>0$, $\xi_{\delta}$ exists and is equal to $\frac{1}{2}$.
\end{maintheorem}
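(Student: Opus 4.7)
The overall strategy is to reduce both $\bar{\xi}\leq 1/2$ and $\underline{\xi}\geq 1/2$ to a sharp large deviation estimate for the largest eigenvalue of LUE, and more generally for the off-diagonal LPP $T_{\mathbf{1},(a,b)}$ with $a,b\asymp n$, which is again an LUE largest eigenvalue by the Johansson correspondence. The central technical input (the new LD result foreshadowed by the abstract) should be an asymptotic of the form $\P(\cU_{\delta})\asymp n^{-\rho}\exp(-n I(\delta))$ with uniform control of polynomial prefactors as $(a,b)$ varies. This sharpens the log-level LDPs of \cite{Jo99,sep98} and can be extracted from the explicit LUE joint eigenvalue density together with rigidity of the bulk spectrum conditional on $\lambda_{\max}$ being atypically large. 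Taking this sharp LD as a black box, both bounds on $\xi_{\delta}$ then follow from a midpoint decomposition of the polymer.

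For $\bar{\xi}\leq 1/2$, write $\Gamma_{n}(n+1)=(k,n+1-k)$ and $m:=k-(n+1)/2$. By the two-halves reconstruction, $T_{n}=T_{1}+T_{2}$ where $T_{1}=T_{\mathbf{1},(k,n+1-k)}$ and $T_{2}=T_{(k,n+1-k),\mathbf{n}}$ are independent, each of typical value $(\sqrt{k}+\sqrt{n+1-k})^{2}\approx 2n-2m^{2}/n$, so their typical sum is $4n-4m^{2}/n$. On $\cU_{\delta}$ the pair must exceed its typical sum by at least $\delta n+4m^{2}/n$; distributing this excess optimally between the halves (the symmetric split is optimal by convexity) and applying the sharp LD to each gives
$$\P\bigl(\Gamma_{n}\ni(k,n+1-k),\ \cU_{\delta}\bigr)\leq\exp\!\bigl(-n I(\delta+4m^{2}/n^{2})(1+o(1))\bigr),$$
using the identity $I(\delta)=2J(\delta/2)$ linking the full- and half-LPP rate functions (forced by the $m=0$ case, where $J$ denotes the half-LPP rate function). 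Since $I'(\delta)>0$, each $|m|\geq n^{1/2+\e}$ costs an extra $\Omega(n^{2\e})$ in the exponent, and summing over the $O(n)$ possible midpoint offsets gives $o(\P(\cU_{\delta}))$. A parallel union bound over every antidiagonal $s\in\{2,\ldots,2n\}$, using the general typical value $4n-4m^{2}n/(s(2n-s))$ for an offset-$m$ crossing of antidiagonal $s$, promotes the midpoint statement to the full transversal fluctuation $D_{n}$.

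For $\underline{\xi}\geq 1/2$, I would argue that the conditional law of $X:=k-(n+1)/2$ under $\cU_{\delta}$ has pointwise mass $\leq C/\sqrt{n}$; this immediately gives $\P(|X|\leq n^{1/2-\e}\mid\cU_{\delta})\leq O(n^{-\e})\to 0$. The bound $\P(X=m,\cU_{\delta})\leq C n^{-1/2}\P(\cU_{\delta})$ for $|m|\lesssim\sqrt{n}$ should follow from a two-sided sharp LD for the convolution $T_{1}+T_{2}$: the Gaussian $n^{-1/2}$ Jacobian arises from the saddle-point of the convolution of the two half-LPP densities at the optimal split $\alpha_{1}=\alpha_{2}=\delta/2$, and a matching sharp lower bound on $\P(\cU_{\delta})$ cancels the overall $\exp(-n I(\delta))$ factor. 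An analogous computation for $|m|\gtrsim\sqrt{n}$ produces a Gaussian tail in $m^{2}/n$, consistent with the upper bound and controlling the contribution of larger midpoints.

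The principal obstacle is the sharp LD input itself: Johansson and Seppäläinen give only $\log\P(\cU_{\delta})=-n I(\delta)(1+o(1))$, while the polymer geometry argument is sensitive to polynomial-in-$n$ prefactors and demands these uniformly across the whole family of off-diagonal LPPs that appear when the midpoint wanders, together with a two-sided version for the lower bound. Extracting these refined asymptotics from the LUE density via rigidity of the bulk eigenvalues is where the bulk of the technical work sits; once it is in hand, the geometric conclusion $\xi_{\delta}=1/2$ follows by the two midpoint arguments above.
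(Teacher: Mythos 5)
Your outline tracks the paper's strategy closely (antidiagonal/midpoint decomposition, sharp LUE large-deviation asymptotics via rigidity, a Gaussian $m^2/n$ cost for off-diagonal crossings, and a saddle-point $n^{-1/2}$ for the conditional midpoint law), but two of the inputs you take as a black box are not actually available in the form your argument needs, and one missing idea is what makes the paper's proof close. For the lower bound you need the pointwise bound $\P(X=m,\,\cU_{\delta})\le Cn^{-1/2}\P(\cU_{\delta})$ for \emph{all} $|m|\lesssim \sqrt n$, and your route to it requires two-sided large-deviation asymptotics with constant-level (polynomial-prefactor) precision for \emph{off-diagonal} passage times, i.e.\ for rectangular Wishart matrices with $M-N\asymp\sqrt n$. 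The rigidity input that is actually available there (the extension of Bourgade--Yau--Yin used in the paper) only yields an error of size $(\log N)^{c\log\log N}$ in the exponent, which completely obscures any $n^{-1/2}$ or $n^{-1}$ prefactor; the constant-precision statement is proved only in the square case $M=N$ (where the G\"otze--Tikhomirov Kolmogorov--Smirnov bound gives an $O(1/N)$ approximation of the spectral measure), and the paper explicitly states that the analogous precision in the rectangular case is expected but not proved. The paper bridges this gap with a separate ingredient absent from your proposal: a stochastic domination $T_{(1,1),(M+1,N-1)}\preceq T_{(1,1),(M,N)}$, obtained by coupling the two LUE ensembles via Lyons' theorem on determinantal processes with ordered projection kernels. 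This lets one bound every off-diagonal crossing point on the antidiagonal by the on-diagonal midpoint, where the sharp $-\log N+O(1)$ asymptotic applies; without it (or without proving the sharp rectangular asymptotic yourself) your lower-bound argument does not close.

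For the upper bound, the estimate you write, $\exp\bigl(-nI(\delta+4m^2/n^2)(1+o(1))\bigr)$, is too weak as stated: the $o(1)$ multiplies $nI(\delta)$, so the error in the exponent can be of order $o(n)$, which swamps the gain $\Theta(m^2/n)=\Theta(n^{2\e})$ you are trying to exploit for $|m|\ge n^{1/2+\e}$. What is needed (and what the paper proves) is a uniform rectangular large-deviation estimate with \emph{sub-polynomial} additive error in the exponent, together with a genuine comparison of the rate functions $N_1I_y(\delta)\ge \tfrac n2 I(\delta)+\Theta(m^2/n)$; this comparison is not the one-line ``shift the argument of $I$ by the loss in the mean'' computation you suggest -- the correct Gaussian coefficient $\beta_{\delta}$ (computed in the paper's appendix) does not equal $4I'(\delta)$ (e.g.\ $\beta_{\delta}\in[4,5]$ for all $\delta$, while $4I'(\delta)$ is small near $\delta=0$), although both are positive so the order of the cost is right. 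Two smaller points also need care: the two halves through a crossing vertex are not independent (they share that vertex's weight, handled in the paper by resampling/stochastic domination), and for antidiagonals near the corners one half is short, so the optimal-split convexity argument must handle arbitrarily large per-unit deviations where $I''$ degenerates, which requires a uniform-in-$\delta$ subadditivity bound rather than plain strict convexity.
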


{A few remarks are in order: 
Observe that it is plausible that the transversal fluctuation in the upper tail large deviation regime should be smaller than $2/3$. One might try to heuristically predict the exponent $1/2$ in the large deviation regime using the KPZ relations between the longitudinal and transversal exponents (see \cite{Cha-KPZ}) and predicting that the longitudinal fluctuation is $O(1)$ in the large deviation regime unlike the typical $O(n^{1/3})$. We will see later that this is indeed the case.

Note that $n^{1/2}$ is the scale where the entropy of paths is maximized, and it turns out that the best strategy to increase $T_{n}$ is to increase weights in that strip (i.e., in the coupling between the typical field and the field conditioned on $\cU_{\delta}$ given by the FKG inequality the weights differ only in the $n^{1/2}$-width strip around the diagonal).  Even though one expects that the exponent $1/2$ is universal for a large class of passage time distribution, our proof will fully exploit the specific integrability properties of exponential LPP. In contrast with the result of \cite{BGS17A}, which did not require exact solvability, and the recent results in \cite{BSS14, BSS17B, BG18, BHS18} which only required the moderate deviation estimates for the last passage time (that are available for a number of other exactly solvable models of last passage percolation), the proof of Theorem \ref{t:main} will be specific to the exponential case. Indeed, we use the correspondence of the last passage time with the largest eigenvalue of a certain random matrix ensemble (LUE) obtained in \cite{Jo99}. We can also carry out the argument for Brownian LPP where there is a similar correspondence to GUE (see Section \ref{fd}); one believes that the calculations might be doable in all the models where there is a determinantal structure. 

The proof of Theorem \ref{t:main} is divided into two parts, separately showing $\bar{\xi}\leq \frac{1}{2}$ and $\underline{\xi}\geq \frac{1}{2}$. As a matter of fact for both the directions we shall prove quantitative estimates stated below.

Let us start with upper bounding $\bar{\xi}$.
\begin{maintheorem}
\label{t:mainu}
There exists a fixed constant $C_0$ such that for any fixed $\delta>0$, 
$$\limsup_{n\to \infty }\P(D_n \geq n^{1/2} (\log n)^{C_0\log \log n}\mid \cU_{\delta}) \to 0.$$

\end{maintheorem}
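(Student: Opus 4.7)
The plan is a union bound over possible far-from-diagonal vertices of the geodesic $\Gamma_n$, combined with the sharp LUE large deviation estimate highlighted as the main technical input of the paper. If $D_n\ge R$ then $\Gamma_n$ passes through some vertex $v=(i,j)$ with $|i-j|\ge R$, and on this event $T_n = T_{\mathbf{1},v} + T_{v,\mathbf{n}} - X_v$ with $T_{\mathbf{1},v}$ and $T_{v,\mathbf{n}}$ decomposable into independent pieces depending on disjoint weights. A union bound thus reduces the question to showing
\begin{equation*}
\sum_{v\,:\,|i-j|\ge R}\P\bigl(T_{\mathbf{1},v}+T_{v,\mathbf{n}}\ge (4+\delta)n\bigr) \;=\; o\bigl(\P(\cU_\delta)\bigr),
\end{equation*}
the sum ranging over $O(n^2)$ vertices.

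The core estimate is a shifted large deviation bound for each summand. For $v=(i,j)$ with $i+j=t$ and $|i-j|=s$ and $t$ comparable to $n$, a Taylor expansion of the shape function gives
\begin{equation*}
\E[T_{\mathbf{1},v}]+\E[T_{v,\mathbf{n}}] \;=\; (\sqrt{i}+\sqrt{j})^2 + (\sqrt{n-i}+\sqrt{n-j})^2 + O(1) \;=\; 4n - \Theta\!\left(\tfrac{s^2}{n}\right),
\end{equation*}
so the two halves must jointly undergo upward deviation of $\delta n + \Theta(s^2/n)$. Both rectangles have aspect ratio close to $1$ when $s\ll n$, with smaller dimensions roughly $(t-s)/2$ and $(2n-t-s)/2$ summing to $\approx n$. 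Applying the sharp LUE upper-tail LD estimate to each and optimizing the excess split between the two halves---which by convexity of the rate function $F$ balances to a per-unit excess of $\delta+\Theta(s^2/n^2)$---yields
\begin{equation*}
\P\bigl(T_{\mathbf{1},v}+T_{v,\mathbf{n}}\ge (4+\delta)n\bigr) \;\le\; \exp\!\left(-n\,F\!\left(\delta+\Theta\!\left(\tfrac{s^2}{n^2}\right)\right)+\text{subleading}\right).
\end{equation*}
A first-order Taylor expansion $F(\delta+\epsilon)=F(\delta)+F'(\delta)\epsilon+O(\epsilon^2)$ produces an additional factor $\exp(-c(\delta)\,s^2/n)$ beyond the matching sharp lower bound $\P(\cU_\delta)\ge\exp(-nF(\delta)+\text{subleading})$, with $c(\delta)=\Theta(F'(\delta))>0$. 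The resulting bound on $\P(D_n\ge R\mid\cU_\delta)$ is of order $n^2\exp(-c(\delta)R^2/n)$, which is $o(1)$ as soon as $R^2/n\gg\log n$; the looser threshold $R=n^{1/2}(\log n)^{C_0\log\log n}$ in the statement is chosen to absorb polylog losses below.

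The main obstacle is the sharp LD input itself: the upper bound on each summand and the matching lower bound on $\P(\cU_\delta)$ must agree up to errors strictly smaller than the shift gain $c(\delta)s^2/n$, and this has to hold uniformly in the aspect ratio $i/j$ and deviation level arising in the sum. This is precisely where the eigenvalue rigidity for LUE enters, and it is the crucial tool flagged by the authors in the introduction. Secondary complications to dispatch separately are: the endpoint regime $t\approx 0$ or $t\approx 2n$, where the Taylor estimate for the typical values degrades and is replaced by the observation that the geodesic cannot reach a corner too quickly; vertices $v$ with $s$ comparable to $t\wedge(2n-t)$, where $F$ must be replaced by the aspect-ratio-dependent rate function $F_\alpha$; and the polylog losses from invoking the sharp LD estimate dyadically across scales of $s$ and $t$, which plausibly account for the iterated-logarithm factor $(\log n)^{C_0\log\log n}$ in the statement in place of the cleaner $\sqrt{\log n}$ predicted by the naive union bound.
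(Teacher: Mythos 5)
Your overall strategy matches the paper's: a union bound over off-diagonal vertices, splitting $T_n$ at $v$ into (essentially) independent halves, sharp upper-tail asymptotics obtained via eigenvalue rigidity, a curvature gain of order $s^2/n$ in the exponent, and a comparison against a lower bound on $\P(\cU_\delta)$ (the paper's Theorem \ref{t:ldpm=n}). The genuine gap is in how you propose to execute this for vertices with $s$ comparable to $t\wedge(2n-t)$ and for very unbalanced splits of the excess: you implicitly require the sharp LD estimate and the curvature comparison to hold \emph{uniformly} in the aspect ratio and in the deviation level, and that input is not available. Theorem \ref{t:ldpgen} is proved only for $N\le M\le 1.1N$ and $\delta$ in a compact range $(\delta_0,L)$, and the rate-function comparison of Proposition \ref{curvature} is perturbative, with error $O(c^3/n^2)$, valid only when the off-diagonal displacement is $o(n)$. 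The paper never needs more because of the stochastic domination result Theorem \ref{t:dom} (Lyons' comparison of determinantal kernels): on each antidiagonal $\mathbb{L}_{2t}$, the weight through \emph{any} vertex of $\mathcal{R}_n(2t)$ is dominated by the weight through the extreme near-diagonal vertex $v_0$ at distance exactly $n^{1/2}h(n)$, so the sharp rectangular estimates are only ever applied to near-square rectangles (Observation \ref{o:dom2} and Proposition \ref{p:line2}). Your alternative of "replacing $F$ by the aspect-ratio-dependent $F_\alpha$" would require proving rigidity-precision LD asymptotics uniformly over aspect ratios, which is substantially more than what the paper establishes; without Theorem \ref{t:dom} or a substitute, the summands far from the diagonal are not controlled.

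A second, related gap concerns small $t$: there the excess forced on the short half can be arbitrarily large, so plain convexity plus a first-order Taylor expansion of the rate function is insufficient, since $I$ is not uniformly strongly convex ($I''(\delta)\to 0$ as $\delta\to\infty$). The paper handles this regime with the uniform-in-$\delta$ subadditivity bound (Proposition \ref{t:diaguniform}) combined with the tailored convexity estimate (Lemma \ref{strongconvexity}) and Lemma \ref{est98}; your sketch does not identify this issue. Finally, your claim that $R^2/n\gg\log n$ suffices understates the required gain: the rigidity input costs $(\log N)^{c\log\log N}$ in the exponent, so the curvature gain must beat that, which is precisely why the threshold is $n^{1/2}(\log n)^{C_0\log\log n}$; since you acknowledge this in your closing paragraph, that point is bookkeeping rather than a flaw, but the two gaps above would need the paper's monotonicity and subadditivity machinery (or genuinely new uniform estimates) to close.
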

The above bound is not optimal and we expect $D_{n}$ to be a tight random variable at scale $n^{1/2}.$ Further elaboration on this point and the reason for the $(\log n)^{C_0\log \log n}$ term above is discussed later in the article. 
The statement for the lower bound is more straightforward. 

\begin{maintheorem}
\label{t:mainl}
Fix $\delta>0$. 
$$\limsup_{n\to \infty}\P(D_n \leq  hn^{1/2}\mid \cU_{\delta}) \to 0 $$
as $h\to 0$.
\end{maintheorem}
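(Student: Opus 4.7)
The plan begins with the natural reduction. Write $w := hn^{1/2}$, $S_w := \{(i,j): 1 \leq i,j \leq n, |i-j| \leq w\}$, and let $T_n^{S_w}$ denote the maximal weight among up/right paths from $\mathbf{1}$ to $\mathbf{n}$ contained in $S_w$. On $\{D_n \leq w\}$ the geodesic lies in $S_w$, so $T_n = T_n^{S_w}$, yielding
\[
\P(D_n \leq hn^{1/2} \mid \cU_\delta) \;\leq\; \frac{\P(T_n^{S_w} \geq (4+\delta)n)}{\P(\cU_\delta)},
\]
and it suffices to show this ratio tends to $0$ as $h \to 0$, uniformly in $n$ large.

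I would estimate the numerator by a multi-scale decomposition feeding into the sharp LUE large deviation input developed earlier in the paper. Fix an intermediate scale $m = n^{1/2}(\log n)^{C}$ for large $C$, set $N = n/m$, and note that every path $\gamma \subset S_w$ crosses the antidiagonals $\{x+y = 2km\}$ at points $P_k = (km + \alpha_k, km - \alpha_k)$ with $|\alpha_k| \leq w$ for all $k$; by concatenation of LPP,
\[
T_n^{S_w} = \max_{(\alpha_k) \in [-w,w]^{N-1}} \sum_{k=0}^{N-1} T_{P_k, P_{k+1}}.
\]
For each fixed crossing sequence the sub-LPPs $T_{P_k, P_{k+1}}$ are mutually independent (disjoint sub-rectangles), each distributed (by Rost--Johansson) as the top eigenvalue of an LUE matrix on a nearly balanced rectangle of dimensions $(m+\beta_k) \times (m-\beta_k)$, with $|\beta_k| \leq 2w \ll m$. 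The sharp LD estimate of the paper, applied uniformly over admissible shapes, gives an explicit upper bound of the form $C(\delta') m^{-\kappa} e^{-m r(\delta')}$ on each sub-LPP tail; an exponential Chebyshev estimate at the uniform-deviation optimizer (unique by strict convexity of $r$) then bounds the probability of the sum exceeding $(4+\delta)n$ by $C_\delta \cdot \Pi(n) \cdot e^{-n r(\delta)}$ for each $(\alpha_k)$, where $\Pi(n)$ is a polynomial factor arising from the $N$ sub-LPP prefactors together with the Gaussian correction at the tilt. A union bound over $(\alpha_k)$ contributes a sub-exponential factor $(2w+1)^{N-1} = e^{o(n)}$.

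Comparison with the sharp asymptotic $\P(\cU_\delta) = (1+o(1))\, C(\delta)\, n^{-\kappa}\, e^{-n r(\delta)}$ (again from the paper's main LUE LD input) cancels the exponential rates, and the surviving ratio is a quotient of polynomial factors modulated by $h$-dependence entering through the shape uniformity of the sub-LPP tails and through the reduction in the admissible crossing configurations as $w \to 0$. Careful bookkeeping then produces a bound $\varepsilon(h) \to 0$ as $h \to 0$. The main obstacle lies precisely here: since the exponential rates in the numerator and denominator coincide, the entire conclusion rests on precise control of polynomial prefactors and sub-exponential corrections, uniformly in $h$. This is exactly the regime where the rigidity-based sharp LUE LD of the paper---and, crucially, its uniformity over the range of nearly-balanced rectangle shapes arising here---becomes indispensable; making the Gaussian correction to the independent-sum LD precise enough to beat the single-LPP sharp prefactor is the technical heart of the argument.
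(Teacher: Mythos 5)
Your opening reduction to bounding $\P(T_n^{S_w}\geq(4+\delta)n)/\P(\cU_\delta)$ is valid, but the multi-scale block decomposition you build on top of it does not close, and the gap sits exactly where you defer to ``careful bookkeeping.'' For a fixed crossing sequence, convolving the $N=n/m$ block tails gains only a polynomial factor: each block tail carries a prefactor of order $m^{-1}$ (and for the non-square blocks Theorem \ref{t:ldpgen} only controls it up to $e^{O(g(m))}$), while summing over the allocation of the excess among the blocks gives back a factor of order $(\sqrt m)^{N-1}$, so the net gain over $e^{-nI(\delta)}$ is roughly $m^{-(N+1)/2}$, up to $C^N e^{O(Ng(m))}$. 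Against this you must pay the union bound $(2w+1)^{N-1}$ over crossing sequences. With your choice $m=n^{1/2}(\log n)^{C}$ and $w=hn^{1/2}$, the ratio to $\P(\cU_\delta)\asymp n^{-1}e^{-nI(\delta)}$ is of order $n\,m^{-1}\,(w/\sqrt m)^{\,N-1}$ times $C^Ne^{O(Ng(m))}$, and $w/\sqrt m= h\,n^{1/4}(\log n)^{-C/2}\to\infty$ while $e^{O(Ng(m))}=e^{n^{1/2+o(1)}}$: the bound diverges rather than tending to $0$ with $h$. The entropy of the crossing configurations and the accumulated per-block errors swamp the polynomial prefactor gain, and no intermediate scale $m=o(n)$ fixes this; the computation only survives if the number of blocks is $O(1)$.

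That is what the paper does. Since $D_n\le hn^{1/2}$ forces $D_n(n)\le hn^{1/2}$, it suffices to control the crossing point on the single antidiagonal $x+y=n$: for each of the $\approx 2hn^{1/2}$ admissible vertices $v$ there, $\P(A_v\mid\cU_\delta)\le \P(T_{\mathbf 1,v}+T'_{v,\mathbf n}\ge(4+\delta)n)/\P(\cU_\delta)$, and by the stochastic domination of Theorem \ref{t:dom} this is largest at the midpoint $v_*=(n/2,n/2)$. For $v_*$ both halves are square, so the sharp estimate of Theorem \ref{t:ldpm=n} (a prefactor $e^{-\log (n/2)+O(1)}$ from each half), combined with a Gaussian summation over how the excess splits between the two halves, gives a ratio $O(n^{-1/2})$ (Proposition \ref{p:midpoint}); summing over the $\approx 2hn^{1/2}$ vertices yields $O(h)\to 0$. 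Your proposal never actually produces this factor of $h$, and it bypasses the domination input (Theorem \ref{t:dom}) that lets a single on-diagonal computation control all off-diagonal crossing points; as written it has a genuine quantitative gap and would not prove the theorem.
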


Clearly Theorem \ref{t:main} follows from the above two results.
\subsection{Large deviation background}
Large deviations for random growth models has been studied classically starting from the work of Kesten \cite{Kes86}. Under general assumptions on the passage time distribution, it possible to show using Kesten's argument that the speed of large deviation in $n$ for the upper tail and $n^2$ for the lower tail. (Kesten's original argument was for first passage percolation for which the tail behaviors are reversed).
For integrable models of last passage percolation explicit large deviation rate functions was derived by Deuschel and Zeitouni \cite{DZ1} and Sepp{\"a}l{\"a}inen \cite{sepLDP} for Poissonian last passage percolation using RSK correspondence/ Young Tableaux combinatorics and connections to Hammersley process respectively. For LPP with exponential and geometric passage times Johansson \cite{Jo99} obtained the large deviation rate functions using connection to generalized permutations, random matrices and orthogonal polynomial ensembles.  Sepp{\"a}l{\"a}inen \cite{sep98} had also obtained the rate function for the upper tail for exponential LPP using coupling to the totally asymmetric exclusion process (TASEP).
The precise result of Johansson establishing the upper tail large deviation principle in exponential LPP is recorded next. 
\begin{theorem}[\cite{Jo99}]
\label{t:ldp}
For $\delta>0$, 
$$\lim_{n\to \infty} \frac{1}{n} \log \P (\cU_{\delta}(n))=-I(\delta)$$
where 
\begin{equation}
\label{e:i}
I(\delta)=-2+(4+\delta)-2\int_{0}^{4} \log (4+\delta-x) \frac{\sqrt{x(4-x)}}{2\pi x}~dx.
\end{equation}
\end{theorem}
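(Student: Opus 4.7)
The proof plan starts from Johansson's identity $T_n \stackrel{d}{=} \lambda_{\max}$, where $\lambda_{\max}$ is the top eigenvalue of an $n\times n$ square LUE matrix (equivalently a complex Wishart matrix $XX^*$ with $X$ having i.i.d.\ standard complex Gaussian entries). Under this correspondence the joint eigenvalue density is
\[
p_n(\lambda_1,\dots,\lambda_n) = \frac{1}{Z_n}\prod_{i<j}(\lambda_i-\lambda_j)^2 \prod_i e^{-\lambda_i}, \qquad \lambda_i \geq 0,
\]
and the rescaled empirical measure $\frac{1}{n}\sum_i \delta_{\lambda_i/n}$ concentrates on the Marchenko--Pastur law $d\mu_{MP}(y) = \frac{\sqrt{y(4-y)}}{2\pi y}\mathbf{1}_{[0,4]}(y)\,dy$. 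Since this support ends at $4$, the event $\{\lambda_{\max}\geq (4+\delta)n\}$ is atypical; its speed must be $n$ (not $n^2$) because only one eigenvalue has to be displaced while the bulk remains near equilibrium.

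Using permutation symmetry I would split off the displaced eigenvalue as
\[
\P(\cU_\delta(n)) = n\int_{(4+\delta)n}^{\infty}\!\!e^{-x}\,\frac{Z_{n-1}(x)}{Z_n}\,dx,
\]
where $Z_{n-1}(x) = \int_{[0,\infty)^{n-1}} \prod_{i<j\leq n-1}(\lambda_i-\lambda_j)^2 \prod_{i\leq n-1}(x-\lambda_i)^2 e^{-\lambda_i}\,d\lambda$ is the partition function for an $(n-1)$-particle Coulomb gas in the external potential $V_x(\mu)=\mu-2\log|x-\mu|$, the extra logarithm encoding repulsion from the outlier at $x$. Writing $x=yn$ with $y>4$, the Ben Arous--Guionnet type large deviation principle for the empirical measure yields
\[
\tfrac{1}{n}\log Z_{n-1}(yn) = \kappa + 2\int_0^4 \log(y-z)\,d\mu_{MP}(z) + o(1)
\]
for a $\delta$-independent constant $\kappa$, since the perturbation induced by the outlier is $O(1/n)$ at the potential level and does not shift the equilibrium measure to leading order. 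Combining with $e^{-yn}$ and the bulk asymptotics of $Z_n$, the $\delta$-independent pieces assemble into the additive constant $+2$, giving an integrand of the form $\exp(-nI(y)+o(n))$. Since $I'(y)=1-2\int(y-z)^{-1}d\mu_{MP}(z)$ vanishes at $y=4$ and is strictly positive for $y>4$ (a Stieltjes-transform check), $I$ is strictly increasing on $(4,\infty)$, and a Laplace method at the lower endpoint $y=4+\delta$ of the outer integral produces $\tfrac{1}{n}\log \P(\cU_\delta(n)) \to -I(\delta)$.

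The main technical hurdle is making the Coulomb-gas asymptotics rigorous with matching upper and lower bounds. The upper bound follows from a soft LDP upper bound for the empirical measure together with integrability of $\log(y-\cdot)$ against $\mu_{MP}$ for $y>4$; the lower bound uses a quadrature configuration placing $n-1$ eigenvalues near the quantiles of $\mu_{MP}$ and one at $(4+\delta)n$, whose contribution to the density is estimated directly from the product formula. Pinning down the additive constant $-2$ amounts to the Euler--Lagrange identity $z-2\int\log|z-y|\,d\mu_{MP}(y) = 2$ on $[0,4]$ for the MP equilibrium, equivalently the normalization $I(0)=0$ forced by $\lambda_{\max}/n\to 4$ in probability. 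As an alternative to the Coulomb-gas route, one may represent $\P(\lambda_{\max}\leq s)$ as a Fredholm determinant built from the Laguerre polynomial kernel and carry out steepest descent on the double-contour integral representation of the kernel in the large deviation regime $s\sim(4+\delta)n$; this is essentially Johansson's original route, and it has the benefit of simultaneously yielding sharper one-point asymptotics that will be needed later.
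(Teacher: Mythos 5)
Your Coulomb-gas plan --- splitting off the displaced eigenvalue, evaluating the constrained $(n-1)$-particle partition function via concentration of the empirical measure around Marchenko--Pastur, handling the ratio $Z_{N-1,N-1}/Z_{N,N}$ by Stirling, and pinning the additive constant through the Euler--Lagrange identity for $\mathsf{MP}$ (equivalently the normalization $I(0)=0$) --- is correct and is essentially the paper's own route: it is the sketch given in Section \ref{s:heuristic}, rigorized in Section \ref{s:sharp} to yield the sharper Theorem \ref{t:ldpm=n}, while the statement itself is quoted from Johansson, whose original orthogonal-polynomial/steepest-descent argument is the alternative you mention at the end. The only cosmetic caveat is that your displayed decomposition is an identity for the expected number of eigenvalues above $(4+\delta)n$ rather than for $\P(\cU_{\delta}(n))$ itself, which is harmless at speed $n$.
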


One can verify that $I(0)=0$ and further just by differentiating under the integral sign that $I(\delta)$ is a convex function. However notice that $I(\delta)$ is not uniformly strongly convex but $I'(\delta), I''(\delta)$ converge to $1$ and zero respectively as $\delta$ goes to infinity. 
As a matter of fact Johansson obtain a different expression for $I(\delta)$. The expression above was first obtained by Majumdar and Vergassola \cite{MV09}. 

A key ingredient in our proofs is a non-asymptotic quantitative version of Theorem \ref{t:ldp} (see Theorem \ref{t:ldpm=n}) below. 
A significant part of the work in this paper goes into proving this using connections to Random matrix theory and rigidity properties of the spectrum.

\subsection{Key Ingredients}
The proof of Theorem \ref{t:main} combines a number of different ingredients, and as by products of this proof we also get a number of other results that are of independent interest. The starting point of the analysis is the well-known remarkable correspondence between the last passage time in the Exponential LPP model and the largest eigenvalue of a certain complex Wishart matrix. We recall this correspondence below.

\subsubsection{Correspondence to LUE}
Let $X_{M\times N}$ denote an $M\times N$ matrix with standard complex Gaussian entries, where $M\geq N$. Let $W=W_{N\times N}:=X^*X$ denote the complex Wishart matrix, and let $\widehat{\lambda_1}\geq \widehat{\lambda_2} \geq \cdots \geq \widehat{\lambda_{N}}$ denote the eigenvalues of $W$. Recall that $T_{(1,1), (M,N)}$ denotes the last passage time from $(1,1)$ to $(M,N)$. The following fundamental correspondence between the last passage time and the eigenvalues of Wishart matrix was established by Johansson \cite{Jo99}.

\begin{proposition}[\cite{Jo99}]
\label{p:eigen}
In the above notation, we have 
\begin{equation}\label{input}
\widehat{\lambda_1} \stackrel{d}{=} T_{(1,1), (M,N)}.
\end{equation}
\end{proposition}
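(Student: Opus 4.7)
The plan is to establish this classical distributional identity via the Robinson--Schensted--Knuth (RSK) correspondence, passing from geometric weights to the exponential case by a scaling limit. The starting observation is the well-known combinatorial fact that applying RSK to an $M \times N$ matrix of nonnegative integer entries produces a pair of semistandard Young tableaux of common shape $\lambda$, and (by Greene's theorem) that $\lambda_1$ coincides with the maximum over up/right paths from $(1,1)$ to $(M,N)$ of the sum of the entries traversed. Thus for any integer-valued i.i.d. environment on $\Z^2$, the law of the last passage time $T_{(1,1),(M,N)}$ equals the law of $\lambda_1$.

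First I would instantiate this for i.i.d. $\mathrm{Geom}(q)$ weights. Using the weight-generating definition of Schur polynomials, the pushforward of the RSK map is the Schur measure
\[
\P(\lambda) \;=\; \frac{1}{Z_q}\, s_\lambda\bigl(\underbrace{q^{1/2},\ldots,q^{1/2}}_{M}\bigr)\, s_\lambda\bigl(\underbrace{q^{1/2},\ldots,q^{1/2}}_{N}\bigr),
\]
with normalization supplied by the Cauchy identity. Recoordinatizing by $x_i := \lambda_i + N - i$ for $1 \le i \le N$ turns this into a determinantal point process on $\Z_{\ge 0}$, the Meixner ensemble, with explicit joint mass proportional to $\prod_{i<j}(x_i-x_j)^2 \prod_i w_q(x_i)$, where $w_q$ is the Meixner weight. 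In particular, the largest point $x_1$ equals $\lambda_1 + N - 1$, whose distribution therefore matches that of the geometric last passage time up to the deterministic shift.

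Next I would send $q \uparrow 1$ and rescale by $(1-q)$. Since $(1-q) \cdot \mathrm{Geom}(q) \Rightarrow \mathrm{Exp}(1)$, the rescaled LPP weights converge to i.i.d. $\mathrm{Exp}(1)$ variables, so $(1-q)\, T^{\mathrm{geom}}_{(1,1),(M,N)} \Rightarrow T_{(1,1),(M,N)}$. On the determinantal side, a standard calculation shows that the rescaled Meixner weight $(1-q)^{-1} w_q((1-q)^{-1} y)$ converges, after the natural normalization, to the Laguerre weight $y^{M-N} e^{-y}$, so the joint density of the rescaled points $(1-q) x_i$ converges to
\[
\frac{1}{Z}\prod_{i<j}(y_i-y_j)^2 \prod_i y_i^{M-N} e^{-y_i},
\]
which is precisely the joint eigenvalue density of the complex Wishart matrix $W = X^{*}X$. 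Hence $(1-q)x_1$ converges in distribution to $\widehat{\lambda}_1$, and combining with the two convergences above identifies the law of $T_{(1,1),(M,N)}$ with that of $\widehat{\lambda}_1$.

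The main technical obstacle is to upgrade weak convergence of the joint densities to genuine convergence in distribution of the largest coordinate, and to move the shift $(1-q)(N-1)$ through the limit cleanly. Both can be dispatched by a uniform tail estimate: the Meixner ensemble admits a classical exponential upper-tail bound for its largest point, so $\{(1-q) x_1\}_{q \uparrow 1}$ is uniformly integrable, and weak convergence of the bulk determinantal kernels (a straightforward exercise in asymptotics of Meixner orthogonal polynomials) promotes to convergence of the Fredholm determinants representing $\P((1-q) x_1 \le t)$. This closes the loop and yields the equality in distribution \eqref{input}.
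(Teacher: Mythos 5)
The paper does not prove this proposition but imports it from \cite{Jo99}, and your argument---RSK on geometric weights giving the Schur/Meixner ensemble, followed by the $q\uparrow 1$ scaling limit to exponential weights on one side and the Laguerre (Wishart) density on the other---is precisely Johansson's original route, correctly outlined. One minor simplification worth noting: since $N$ is fixed, the transfer of the limit is a finite-dimensional weak-convergence statement for the $N$-point law (pointwise convergence of the rescaled joint densities plus a tail bound, \`a la Scheff\'e, suffices), so the Fredholm-determinant and kernel-asymptotics machinery you invoke at the end is not needed.
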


There is a more general correspondence  between last passage times with other eigenvalues and the Wishart minor process \cite{adler} but we shall not need that. We shall use Proposition \ref{p:eigen} in the following way. The eigenvalues of a complex Wishart matrix form a determinantal point process on $\mathbb{R}$ \cite{Jo99} whose joint density can be explicitly written (see Section \ref{s:wishart}). We shall need a two-fold consequence of this information.

\subsubsection{Sharp Asymptotics of Large Deviation Probability}
\label{s:sharp1}
One can use the so called Coulomb gas techniques \cite{MV09} to obtain the upper tail large deviation rate function for $\widehat{\lambda_1}$ from the joint density of $(\widehat{\lambda_1}, \ldots, \widehat{\lambda_{N}})$. We shall use fine rigidity results available for the eigenvalues \cite{GT14, BYY14} to refine the calculation to obtain sharp asymptotics of log of the large deviations probabilities for $\widehat{\lambda_1}$ up to  sub-polynomial correction terms. 

For our convenience we shall work with scaled eigenvalues. Let ${\lambda_1}\geq {\lambda_2} \geq \cdots \geq {\lambda_{N}}$ denote the ordered eigenvalues of the matrix $\frac{1}{M} X^* X$. The reason for this scaling is that the empirical distribution of eigenvalues $\frac{1}{N}\sum \delta_{\lambda_i}$ converges to Marchenko-Pastur distribution of appropriate parameter (see Section \ref{s:wishart}) making certain calculations more transparent. We shall prove the following two results for large deviation of $\lambda_1$ that are of independent interest.

As we shall have the occasion to use different pairs of $(M,N)$ we shall denote the corresponding probability measures by $\P_{M,N}$. In the case $M=N$, i.e., where $X$ is a square matrix, we shall simply denote the probability by $\P_{N}$. In the square matrix case the log probability can be evaluated up to a constant, and that is our first result in this vein.

\begin{theorem}
\label{t:ldpm=n}
Let $M=N$ and and fix  $\delta_0>0$. Then for each $\delta\in (\delta_0,\infty)$
$$\log \P_{N}(\lambda_1 > 4+\delta)= -NI(\delta)-\log N+O(1)$$
as $N\to \infty$ where the $O(\cdot)$ term just depends on $\delta_0$ and $I(\delta)$ be defined by \eqref{e:i}.  
\end{theorem}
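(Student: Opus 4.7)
My plan is to derive a sharp pointwise asymptotic for the density $g_N$ of the largest eigenvalue $\lambda_1$ on $[4+\delta_0,\infty)$, and then recover the tail probability by a one-dimensional Laplace estimate.

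The joint density of the eigenvalues of $\tfrac{1}{N}X^{*}X$ at $M=N$ is
$$\frac{1}{Z_N}\prod_{1\leq i<j\leq N}(\lambda_i-\lambda_j)^{2}\prod_{i=1}^{N}e^{-N\lambda_i}\,\mathbf{1}_{\lambda_i\geq 0},$$
with $Z_N$ a Selberg-computable normalization. Integrating out $N-1$ coordinates and using symmetry,
$$g_N(x)=\frac{N\,e^{-Nx}}{Z_N}\int_{[0,x]^{N-1}}\prod_{i<j}(\mu_i-\mu_j)^{2}\prod_{i}(x-\mu_i)^{2}\,e^{-N\mu_i}\,d\mu.$$
The integrand is the exponential of an $(N{-}1)$-particle Coulomb-gas energy confined to $[0,x]$ and interacting with an extra unit charge at $x$. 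I would evaluate this integral by freezing the $\mu_k$ at the classical Marchenko-Pastur quantiles $\gamma_1<\cdots<\gamma_{N-1}$ of the density $\rho_{\mathrm{MP}}(y)=\sqrt{y(4-y)}/(2\pi y)$ on $[0,4]$, and controlling the error via the bulk and edge eigenvalue rigidity theorems of \cite{BYY14,GT14}, which provide $|\mu_k-\gamma_k|\leq N^{-1+o(1)}$ with overwhelming probability uniformly in $k$. This allows me to replace every Riemann sum of log terms by a Stieltjes integral against $\rho_{\mathrm{MP}}$ with only $O(1)$ additive error in log scale. Combined with the Selberg-type asymptotic of $\log Z_N$, this gives
$$\log g_N(x)=-N\Bigl[x-2\int_{0}^{4}\log(x-y)\,\rho_{\mathrm{MP}}(y)\,dy-2\Bigr]+O(1)=-N\,I(x-4)+O(1),$$
uniformly on bounded subsets of $[4+\delta_0,\infty)$.

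Next, I would verify by differentiating \eqref{e:i} that $I'$ is continuous, satisfies $I'(0)=0$ and $I'(\delta)\to 1$, and (since $I$ is convex) is therefore bounded below by a positive constant on $[\delta_0,\infty)$. Applying Laplace's method to
$$\P_N(\lambda_1>4+\delta)=\int_{4+\delta}^{\infty}g_N(x)\,dx,$$
together with a crude tail bound $g_N(x)\leq e^{-cNx}$ for large $x$ (which follows for instance from $\lambda_1\leq\tr\tfrac{1}{N}X^{*}X$ and concentration of the trace), yields
$$\log\P_N(\lambda_1>4+\delta)=-NI(\delta)-\log N+O(1),$$
where the $-\log N$ arises entirely from the $(NI'(\delta))^{-1}$ prefactor produced by Laplace.

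The principal technical obstacle is sharpening the density asymptotic from the $e^{o(N)}$ slack that a classical Coulomb-gas argument would provide to an actual $O(1)$ fluctuation correction. Upper bounds come from a union bound over the rigidity-failure event together with deterministic Riemann-sum comparison, but matching lower bounds require restricting the integral to a set of rigidity-good configurations and quantifying the Gaussian-like fluctuations around the classical positions using local log-concavity of the Coulomb free energy at its minimizer. An additional subtlety is the hard edge of $\rho_{\mathrm{MP}}$ at $0$: bounding $\sum_i \log(x-\mu_i)$ in a stable way for $\mu_i$ very close to $0$ calls for a slightly refined rigidity estimate at the left edge. Once these steps are in place, the argument assembles into the claimed two-term expansion with the constant absorbed into $O(1)$.
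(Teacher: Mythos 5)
Your overall architecture is the same as the paper's: fix the location of $\lambda_1$, estimate the remaining $(N-1)$-particle Coulomb integral by replacing the linear statistic $\sum_i\bigl(2\log(\lambda_1-\lambda_i)-\lambda_i\bigr)$ with $N\int\bigl(2\log(\lambda_1-x)-x\bigr)d\mathsf{MP}(x)$, and then extract the $-\log N$ from a one-dimensional Laplace estimate using that $I'$ is bounded below on $[\delta_0,\infty)$ (the paper does this by discretizing the $\lambda_1$-integral in steps of $1/N$). However, there is a genuine gap at the step you yourself flag as the principal obstacle: you claim that the rigidity theorems of \cite{BYY14,GT14} let you replace the Riemann sum by the $\mathsf{MP}$ integral ``with only $O(1)$ additive error in log scale.'' They do not. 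The rigidity bound $|\lambda_j-\gamma_j|\lesssim (\log N)^{c\log\log N}\min(j,N+1-j)^{-1/3}N^{-2/3}$ (not $N^{-1+o(1)}$ uniformly in $j$, by the way) yields, after summing the per-eigenvalue Lipschitz contributions, an error of size $(\log N)^{c\log\log N}=N^{o(1)}$ for the linear statistic --- this is exactly why the paper's general-$M$ result (Theorem \ref{t:ldpgen}) carries a $(\log N)^{c\log\log N}$ error rather than $O(1)$. Your proposed remedy (union bound over rigidity failure for the upper bound, ``local log-concavity of the Coulomb free energy'' for the lower bound) is not developed into an argument and, as stated, still leaves an $N^{o(1)}$ slack, which destroys the $-\log N+O(1)$ conclusion.

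What the paper actually uses in the $M=N$ case is different and is the crucial ingredient you are missing: (i) the Guionnet--Zeitouni concentration inequality for Lipschitz linear eigenvalue statistics (Theorem \ref{t:conc}), which gives $O(1)$ fluctuations of $\sum_i g_{\lambda_1}(\lambda_i)$ around $(N-1)\E_{\mathsf{ESM}}(g_{\lambda_1})$; and (ii) the G\"otze--Tikhomirov bound $d_{\rm KS}(\mathsf{ESM},\mathsf{MP})=O(1/N)$ (Theorem \ref{t:gotzeks}, exploited through Proposition \ref{l:diffcontrol}), which replaces the centering $\E_{\mathsf{ESM}}$ by $\int\cdot\,d\mathsf{MP}$ with error $O(1/N)$ per eigenvalue, hence $O(1)$ in total. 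To apply these one must first truncate the test function: the paper replaces $2\log(\lambda_1-x)-x$ by a majorant $g_{\lambda_1}$ that is constant beyond $4+\delta/2$, so that its Lipschitz norm depends only on $\delta_0$, uses $g_{\lambda_1}\geq 2\log(\lambda_1-x)-x$ for the upper bound, and restricts to the overwhelmingly likely event $\{\lambda_2\leq 4+\delta/2\}$ for the matching lower bound. Note also that your worry about the hard edge at $0$ is moot: for $x\geq 4+\delta_0$ the function $\log(x-\mu)$ is smooth and bounded for $\mu$ near $0$ (and at $M=N$ there is no $\prod_i\lambda_i^{M-N}$ factor); the real singularity is at the top, i.e.\ eigenvalues approaching $\lambda_1$, which is precisely what the truncation at $4+\delta/2$ handles. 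With these substitutions your Laplace step goes through and matches the paper's discretization argument.
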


A similar but less precise result is obtained in the case $M\neq N$. Before making a precise statement let us first explain the result. Recall that if $M,N\to \infty$ and $\frac{N}{M}\to y\in (0,1]$, then $\lambda_1 \to (1+\sqrt{y})^2$ almost surely, and one can also show that $\frac{1}{N}\log \P_{M,N}(\lambda_{1}>(1+\sqrt{y})^{2}+\varepsilon)$ converges to a precise large deviation rate function depending on $y$ and $\varepsilon$. However, for our applications, we will need to deal with the situation where $M=N+o(N)$ which is much more delicate.  
For $y\in (0,1]$ and {$\delta>0$}, let us define 

\begin{equation}
\label{e: rate1}
J_{y}(\delta):=\int_{(1-\sqrt{y})^2}^{(1+\sqrt{y})^2} \log (4+\delta-x) \frac{1}{2\pi x y} \sqrt{((1+\sqrt{y})^2-x)(x-(1-\sqrt{y})^2)}~dx;~\text{and}
\end{equation}

\begin{equation}
\label{e: rate2}
I_{y}(\delta):=- (2+y^{-1})+\log y +1+(4+\delta)y^{-1}-(y^{-1}-1)(\log (4+\delta))-2J_{y}(\delta).
\end{equation}
We have the following theorem. 
\begin{theorem}
\label{t:ldpgen}
There exists a universal constant $c>0$ such that for each $M,N$ with  $1.1N\ge M\ge N$, and each $\delta_0,L$ with $0<\delta_0<L$ and each $\delta\in (\delta_0,L)$ we have
$$\log \P_{M,N}(\lambda_1> (4+\delta))=-NI_{y}(\delta)+ O((\log N)^{c\log \log N}),$$ where $y=\frac{N}{M}\in (0,1]$ and the constant in the $O(\cdot)$ term is just a function of $\delta_0$ and $L$.
\end{theorem}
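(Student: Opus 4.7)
The plan is to analyze the tail probability by exploiting the explicit joint density of LUE eigenvalues, matching the leading order against the Marchenko-Pastur equilibrium calculation and controlling the error via modern rigidity estimates. Writing $\lambda_1>\cdots>\lambda_N$ for the ordered eigenvalues of $\tfrac{1}{M}X^*X$ under $\P_{M,N}$, the joint density is
$$\frac{1}{Z_{M,N}}\prod_{i<j}(\lambda_i-\lambda_j)^2\prod_j \lambda_j^{M-N}e^{-M\lambda_j},$$
so the marginal density of the top eigenvalue at $\lambda_1=s$ is
$$f_{\lambda_1}(s)=\frac{N\,s^{M-N}e^{-Ms}}{Z_{M,N}}\int_{\mu_{N-1}<\cdots<\mu_1<s}\prod_j(s-\mu_j)^2\prod_{i<j}(\mu_i-\mu_j)^2\prod_j\mu_j^{M-N}e^{-M\mu_j}\,d\mu_1\cdots d\mu_{N-1}.$$
Up to a harmless rescaling $\mu\mapsto\tfrac{M}{M-1}\mu$ the inner integral is the expectation under an LUE of size $N-1$ of the log-gas observable $\prod_j(s-\mu_j)^2\mathbf{1}_{\mu_1<s}$. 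The strategy is to take logarithms, evaluate the scalar pieces exactly, and replace the empirical statistic $\sum_j\log(s-\mu_j)$ by its Marchenko-Pastur mean $NJ_y(s-4)$.

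First I would compute $\log Z_{M,N}$ to $O(1)$ precision using the closed-form Selberg-Laguerre product (a ratio of gamma functions) together with Stirling; in the regime $y=N/M\in[1/1.1,1]$ this produces the constants $-(2+y^{-1})+\log y+1$ of $I_y(\delta)$, while the prefactor $(M-N)\log s-Ms$ at $s=4+\delta$ yields $(y^{-1}-1)\log(4+\delta)-(4+\delta)y^{-1}$. Next, and this is the heart of the argument, I would invoke the bulk and edge rigidity results for LUE from \cite{GT14,BYY14}: with probability at least $1-N^{-D}$ for any fixed $D$, the eigenvalues $\mu_j$ are within $N^{-1}(\log N)^{c\log\log N}$ of their classical Marchenko-Pastur locations $\gamma_j$ in the bulk, with the usual $k^{-1/3}N^{-2/3+o(1)}$ bounds near the edges. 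Since $s\in(4+\delta_0,4+L)$ is macroscopically separated from the upper edge $(1+\sqrt{y})^2\le 4$, the function $x\mapsto\log(s-x)$ is Lipschitz on the support, so
$$\sum_j\log(s-\mu_j)\;=\;N\int\log(s-x)\,\rho_{MP,y}(x)\,dx+O((\log N)^{c\log\log N})\;=\;N J_y(s-4)+O((\log N)^{c\log\log N})$$
on the rigidity event. Assembling these pieces yields $\log f_{\lambda_1}(s)=-NI_y(s-4)+O((\log N)^{c\log\log N})$ pointwise. The upper bound on $\log\P_{M,N}(\lambda_1\ge 4+\delta)$ then follows by integrating over $s\in[4+\delta,\infty)$, splitting at $s=4+L$ (the tail being negligible by a crude Laplace-transform bound together with monotonicity of $I_y$), and the matching lower bound by restricting the integration to a thin slab $s\in[4+\delta,4+\delta+N^{-1}]$.

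The main obstacle is implementing the rigidity step cleanly, where two subtleties appear. First, the $(N-1)$-particle ensemble inside the marginal formula is not a standard LUE but carries the extra external-charge factor $\prod_j(s-\mu_j)^2$; one must show either by a Radon-Nikodym change of measure (feasible because $\log(s-\mu_j)$ is bounded and smooth when $s$ is separated from the edge, making the density ratio of subexponential size) or by re-running the rigidity machinery on the perturbed log-gas that the classical-location approximation still holds at the $(\log N)^{c\log\log N}$ scale. Second, while bulk eigenvalues contribute only $N\cdot N^{-1+o(1)}$ in total, the extreme top $O(N^{1/3})$ eigenvalues sit within $N^{-2/3+o(1)}$ of the soft edge, and summing their individual corrections to $\log(s-\mu_j)$ requires the $k$-dependent edge rigidity estimates of \cite{BYY14}. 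It is this near-edge control that precludes an $O(1)$ bound in the general $M\neq N$ case and accounts for the quasi-polylogarithmic error in the statement, in contrast with the sharper $O(1)$ precision in the square case of Theorem \ref{t:ldpm=n} where Hankel-determinant asymptotics are available.
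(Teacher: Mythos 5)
Your overall skeleton matches the paper's (integrate out $\lambda_1$, view the remaining integral against the $(M-1,N-1)$ ensemble, Stirling for the partition-function ratio, replace the linear statistic $\sum_j\log(s-\mu_j)$ by $N\int\log(s-x)\,d\mathsf{MP}_y$, then integrate over $s$ using monotonicity of $I_y$ for the upper bound and a width-$1/N$ slab for the lower bound). But there is a genuine gap in the central estimate, namely the \emph{upper} bound on $\E_{M-1,N-1}\bigl[\prod_j(s-\mu_j)^2 e^{-\sum_j\mu_j}\ind_{\mu_1<s}\bigr]$. You propose to restrict to the rigidity event and conclude that the linear statistic equals its Marchenko--Pastur mean up to $O((\log N)^{c\log\log N})$. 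That gives the lower bound, but not the upper bound: the observable is the exponential of a linear statistic and can be as large as $e^{cN}$ above its typical value $e^{2NJ_y(s-4)}$ (since $\log s-\int\log(s-x)\,d\mathsf{MP}_y=\Theta(1)$), while the rigidity event of Theorem \ref{t:rigidity} fails only with probability $e^{-(\log N)^{c\log\log N}}$ (and $1-N^{-D}$, as you state it, is even weaker). Hence the bad-event contribution $e^{\Theta(N)}\cdot e^{-(\log N)^{c\log\log N}}$ swamps the main term, and the claimed pointwise estimate $\log f_{\lambda_1}(s)=-NI_y(s-4)+O((\log N)^{c\log\log N})$ does not follow from rigidity alone. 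Your "main obstacle" paragraph touches the related issue of the external-charge tilt, but the proposed fixes do not close this: the Radon--Nikodym ratio of the tilted gas to LUE is of \emph{exponential}, not subexponential, size, and re-running the rigidity machinery for the perturbed log-gas is a substantial separate undertaking that the statement's proof does not require.

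The paper resolves exactly this point with two ingredients you are missing. First, for $\lambda_1\in[4+\delta,L]$ the interaction $2\log(\lambda_1-x)-x$ is replaced by a surrogate $g_{\lambda_1}$ that agrees with it on $[0,4+\delta/2]$, is constant beyond, dominates it for all $x<\lambda_1$ (the function is decreasing), and is bounded and Lipschitz \emph{uniformly} in $\lambda_1$; this both removes the constraint $\mu_1<\lambda_1$ in the upper bound and makes the statistic globally Lipschitz. Second, the exponential moment of $\sum_j g_{\lambda_1}(\mu_j)$ under the \emph{unperturbed} $(M-1,N-1)$ ensemble is controlled by the Guionnet--Zeitouni log-Sobolev concentration (Theorem \ref{t:conc}), whose failure probabilities $e^{-C\eta^2(M+N)^2}$ are strong enough to integrate the tail; the BYY rigidity enters only to identify the centering, i.e.\ to show $\E_{\mathsf{ESM}}(g_{\lambda_1})=\int g_{\lambda_1}\,d\mathsf{MP}_y+O((\log N)^{c\log\log N}/N)$ via Lemmas \ref{l:approx} and \ref{approxsm} (for the lower bound one additionally restricts to $\{\lambda_2\le 4+\delta/2\}$, which has probability close to one). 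As a minor point, the sharper $O(1)$ error when $M=N$ comes in the paper from the Kolmogorov--Smirnov bound of Theorem \ref{t:gotzeks} replacing the rigidity input, not from Hankel-determinant asymptotics or near-edge eigenvalue effects as you suggest.
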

First, as a sanity check observe that for $y=1$, $I_{y}(\delta)=I(\delta)$ as is expected.
Further we would like to point here out that the above theorem and hence Theorem \ref{t:mainu} is not quantitatively optimal. The $(\log N)^{c\log \log N}$ term is an artefact of our proof that comes from using the universal rigidity result of \cite{BYY14} for all eigenvalues.
 Indeed, one expect the error term in Theorem \ref{t:ldpgen} to be also $-\log N+O(1)$ as in Theorem \ref{t:ldpm=n}.  We elaborate more on this point later in the section where the rigidity input Theorem \ref{t:rigidity} is stated. 
\subsubsection{Comparison of Largest Eigenvalues} Many of our arguments would rely on stochastic comparisons of the eigenvalue ensembles of covariance matrices of various dimensions.  Let $X$ be as above  i.e., n $M\times N$ matrix with standard complex Gaussian entries and let $Y$ be an $(M+1)\times (N-1)$ matrix with standard complex Gaussian entries. Let $\widetilde{W}=\widetilde{W}_{(N-1)\times (N-1)}:=Y^*Y$ denote the complex Wishart matrix, and let $\tilde{\lambda}_1\geq \tilde{\lambda}_2 \geq \cdots \geq \tilde{\lambda}_{N-1}$ denote the eigenvalues of $\widetilde{W}$. Observe that we do not scale this matrix. Recall also that $\widehat{\lambda_1} \geq \widehat{\lambda_2}\geq \cdot \geq \widehat{\lambda_{N}}$ denote the eigenvalues of the unscaled matrix $X^*X$. We have the following result.

\begin{theorem}
\label{t:dom}
In the above set-up, assume $M-N$ is even. Then there exists a coupling such that almost surely $$(\tilde{\lambda}_1,\tilde{\lambda}_2, \ldots,\tilde{\lambda}_{N-1})\subset 
(\widehat{{\lambda}_1},\widehat{{\lambda}_2},\ldots ,\widehat{{\lambda}_{N}}).$$
In particular 
we have $\widehat{\lambda_1} \succeq \tilde{\lambda}_{1}$, and equivalently $T_{(1,1),(M,N)}\succeq T_{(1,1),(M+1,N-1)}$ where $\succeq$ denotes stochastic domination. \end{theorem}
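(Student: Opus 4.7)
The plan is to realize both eigenvalue ensembles as determinantal point processes on $(0,\infty)$ whose correlation kernels are orthogonal projections onto nested finite-dimensional subspaces of $L^{2}((0,\infty),dx)$, and then invoke a standard set-containment coupling theorem for determinantal processes.

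I would begin by recording the well-known fact that the unordered ensemble $\{\widehat{\lambda}_{1},\ldots,\widehat{\lambda}_{N}\}$ is the determinantal point process whose correlation kernel $K_{M,N}$ is the orthogonal projection onto the $N$-dimensional subspace
$$V_{M,N}:=\bigl\{x^{(M-N)/2}e^{-x/2}\,p(x):p\in\mathbb{R}[x],\ \deg p<N\bigr\}\subset L^{2}((0,\infty),dx),$$
this spanning being a direct consequence of orthonormalizing Laguerre polynomials against the weight $x^{M-N}e^{-x}$. An identical description gives a projection kernel $K_{M+1,N-1}$ with range $V_{M+1,N-1}$ of dimension $N-1$ for the ensemble $\{\tilde{\lambda}_{1},\ldots,\tilde{\lambda}_{N-1}\}$. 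The crucial and essentially only nontrivial step is the algebraic observation $V_{M+1,N-1}\subset V_{M,N}$: writing $M-N=2a$ using the parity hypothesis, a generic element of $V_{M+1,N-1}$ has the form
$$x^{a+1}e^{-x/2}\,q(x)\,=\,x^{a}e^{-x/2}\cdot\bigl(x\,q(x)\bigr),\qquad \deg q<N-1,$$
and since $x\,q(x)$ is a polynomial of degree strictly less than $N$ it lies in $V_{M,N}$. The parity condition is used here precisely to render $x^{(M-N)/2}$ a genuine monomial rather than a half-integer power, reducing the inclusion to the transparent observation that multiplication by $x$ embeds $\mathbb{R}[x]_{<N-1}$ into $\mathbb{R}[x]_{<N}$.

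Since both kernels are orthogonal projections, the range inclusion is equivalent to the operator inequality $K_{M+1,N-1}\le K_{M,N}$ on $L^{2}((0,\infty),dx)$. At this point I would invoke the continuous version of Lyons' coupling theorem for determinantal processes (see the monograph of Hough--Krishnapur--Peres--Vir\'ag): whenever two locally trace-class self-adjoint correlation kernels satisfy $K_{1}\le K_{2}$, the associated determinantal point processes can be coupled so that the configuration of the first is almost surely a subset of that of the second. Applied here, this immediately produces the desired a.s.\ containment $\{\tilde{\lambda}_{1},\ldots,\tilde{\lambda}_{N-1}\}\subset\{\widehat{\lambda}_{1},\ldots,\widehat{\lambda}_{N}\}$; taking the maxima on both sides gives $\widehat{\lambda}_{1}\ge\tilde{\lambda}_{1}$, and appealing to Proposition~\ref{p:eigen} identifies this with the last-passage stochastic dominance $T_{(1,1),(M,N)}\succeq T_{(1,1),(M+1,N-1)}$. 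There is no substantive obstacle beyond the algebraic verification of the subspace inclusion; the rest is a direct application of the off-the-shelf coupling machinery for projection determinantal processes with nested ranges.
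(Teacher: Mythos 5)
Your proposal is correct and follows essentially the same route as the paper: identify both eigenvalue ensembles as projection determinantal processes (Laguerre-type kernels), use the parity of $M-N$ to see that the $(M+1,N-1)$ kernel's range is a subspace of the $(M,N)$ kernel's range, and then apply Lyons' comparison/coupling theorem for determinantal processes with ordered kernels, finishing via Proposition \ref{p:eigen}. The only cosmetic difference is that you absorb the weight $x^{(M-N)/2}e^{-x/2}$ into the functions and work in $L^{2}((0,\infty),dx)$, whereas the paper keeps the background measure $e^{-x}dx$ and compares spans of monomials $x^{(M-N)/2},\ldots,x^{(M+N)/2-1}$.
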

The statement holds also in the case $M-N$ odd with minor modifications in the proof which are indicated later.
{A related discussion involving interlacing inequalities and relating polymer weights to minor processes appear in \cite{adler}.} Observe that, Theorem \ref{t:dom} is not a consequence of the standard interlacing results for eigenvalues, as there does not appear to be a natural coupling which has $\widetilde{W}$ as a minor of $W$. Instead our proof invokes an abstract result of Lyons \cite{lyons} about stochastic comparisons of determinantal point processes whose kernels are ordered. We thank Manjunath Krishnapur for showing us how this result can be used to prove Theorem \ref{t:dom}. We reproduce this proof later (See Section \ref{secdom}).

\subsubsection{Outline of the argument}
We now give a brief description of how the argument proceeds once we have the above ingredients at our disposal, together with the correspondence given by Proposition \ref{p:eigen}. Recall that, $D_{n}(t)$ denotes the transversal fluctuation of the geodesic $\Gamma_{n}$ at the anti-diagonal $\{x+y=t\}$. Observe that for the lower bound it suffices to only control the transversal fluctuation at the main anti-diagonal, i.e., on the line $\{x+y=n\}$. For the sake of clarity we shall also restrict our discussion of the upper bound here to this case i.e., upper bounding of $D_{n}(n)$ only. The proof of the more general case is similar. 

The main idea of the both the upper and lower bound is the same, For $v=(v_1,v_2)$ with $v_1+v_2=n$, let $\Gamma^*(v)$ denote the highest weight path from $\mathbf{1}$ to $\mathbf{n}$ that passes through $v$. The basic estimate is to compute, up to  a sufficient degree of accuracy, the quantity:

\begin{equation}
\label{e:ratio}
\dfrac{\P(\ell (\Gamma^*(v))\geq (4+\delta) n)}{\P(\cU_{\delta}(n))}.
\end{equation}

As $\ell(\Gamma^*(v))= T_{\mathbf{1},v}+T_{v,\mathbf{n}}$ (up to an error of $X_{v}$ which we shall show can be ignored), we can use Theorem \ref{t:ldpgen} to obtain an upper bound of the numerator of \eqref{e:ratio}, whereas Theorem \ref{t:ldpm=n} provides a lower bound of the denominator. However to compare the upper and lower bounds, using the connection to eigenvalues,  one needs to compare the large deviation rate functions for the largest eigenvalue of a square and a non-square Wishart Matrix with aspect ration $y$. This is done in Section \ref{s:polyub} which establishes the following quadratic correction:
$$I_{y}(\cdot)=I({\cdot})+\Theta(\frac{c^2}{n}),$$
where $y=\frac{n-c}{n+c}.$
 Combined, these provide a upper bound of $o(\frac{1}{n^2})$ for the \eqref{e:ratio} which is thus  $o(1)$ even after summing over all $v$ with $|v_1-v_2|\ge n^{1/2}(\log n)^{c\log \log n}$. 

For the lower bound, we shall focus on the point $v_*=(\frac{n}{2}, \frac{n}{2})$. In this case, we can estimate both numerator and denominator of \eqref{e:ratio} using Theorem \ref{t:ldpm=n}. This sharp estimates lets us conclude that \eqref{e:ratio} is $O(n^{-1/2})$ for $v=v_{*}$. Using the comparison result Theorem \ref{t:dom}, we can then obtain similar estimates for other $v$s on the main anti-diagonal, and this concludes the proof of Theorem \ref{t:mainl}. Observe that Theorem \ref{t:dom} is crucial here, as the weaker estimate Theorem \ref{t:ldpgen} will not be sufficient for our purposes. 
\subsection{Organization of the paper}
The rest of this paper is organized as follows. In Section \ref{s:wishart} we recall the basic facts about the eigenvalue distribution of a complex Wishart matrix, and some concentration and rigidity results that we need. In Section \ref{s:sharp}, we establish the sharp asymptotics of the large deviation probability for the largest eigenvalue, namely we establish Theorem \ref{t:ldpm=n} and Theorem \ref{t:ldpgen}. Section \ref{secdom} is devoted to proving the domination result Theorem \ref{t:dom}. A key estimate in comparing the rate functions for square and non-square Wishart matrices is obtained in Section \ref{s:polyub}. Sections \ref{ptu} and \ref{ptl} are devoted to the proofs of Theorems \ref{t:mainu} and \ref{t:mainl} respectively. 
Various possible extensions to other models and future research directions are outlined in Section \ref{fd}. Finally technical proofs of certain results are included in the Appendix (Section \ref{calculations}). 
\subsection*{Acknowledgements}
The authors thank Paul Bourgade, Amir Dembo,    Subhroshekhar Ghosh, Manjunath Krishnapur, Satya Majumdar, Allan Sly, HT Yau, Jun Yin and Ofer Zeitouni for several useful discussions. RB is partially supported by an ICTS Simons Junior Faculty Fellowship and a Ramanujan Fellowship from Govt. of India. Part of this research was performed during a visit of RB to UC Berkeley Statistics department, he gratefully acknowledges the hospitality.

\section{Eigenvalues of Wishart matrices}
\label{s:wishart}
In this section we recall the basics about the joint distribution of eigenvalues of complex Wishart matrix that will be used to prove Theorem \ref{t:ldpm=n} and Theorem \ref{t:ldpgen}.
\subsection{Joint density of eigenvalues}
Let us recall the setting in Section \ref{s:sharp1}. Let $M\geq N$ and $X$ denote an $M\times N$ matrix of i.i.d.\ complex Gaussian variables with variance $1$ (that is the real and imaginary parts are independent $N(0,\frac{1}{2})$ variables). Clearly $\frac{1}{M}X^*X$ almost surely has all positive eigenvalues. Let $\lambda_1 \geq \lambda_2 \geq \cdots \geq \lambda_{N}$ denote the eigenvalues of $\frac{1}{M}X^*X$. Let $\Lambda_{N}$ denote the cone
$$\Lambda_{N}:=\{(\lambda_1, \lambda_2, \ldots, \lambda_{N})\in \R^N: \lambda_1 \geq \lambda_2 \geq \cdots \geq \lambda_{N}\}.$$

It is a well-known fact (see \cite{Jo99,BBP}) that for $\underline{\lambda}=(\lambda_{1}, \ldots , \lambda_{N})\in \Lambda_{N}$ the joint eigenvalue density of the scaled Wishart matrix is given by

\begin{equation}
\label{e:density}
f(\underline{\lambda})=f_{M,N}(\underline{\lambda})= \frac{1}{Z_{M,N}}V(\underline{\lambda})^2\prod_{i=1}^{N}\lambda_i^{M-N}e^{-M\sum_{i=1}^N\lambda_i}, 
\end{equation}
 where
$$V(\underline{\lambda}):= \prod_{i<j} (\lambda_i-\lambda_j),$$
and the partition function $Z_{M,N}$ is given by 

\begin{equation}
\label{e:partition}
Z_{M,N}=\frac{\prod_{j=0}^{N-1}j!(M-N+j)!}{M^{NM}}. 
\end{equation}
For convenience of notation, we shall denote the above density by $f_{N}$ in the case $M=N$.

\subsection{Marchenko-Pastur law, and large deviation for the leading eigenvalue}
\label{s:heuristic}
Using \eqref{e:density}, one can show that (see \cite{MP,GT14}) the empirical spectral measure $\frac{1}{N}\sum_{i=1}^{N} \delta_{\lambda_i}$ of the matrix $\frac{1}{M} XX^*$ converges (as $M\to \infty$ and $\frac{N}{M}\to y\in (0,1]$) to the Marchenko-Pastur law $\mathsf{MP}_{y}$ with parameter $y$ with the density 
\begin{equation}
\label{e:mpy}
d\mathsf{MP}_{y}(x)= \frac{1}{2\pi xy} \sqrt{(b-x)(x-a)}~dx; \qquad x\in (a,b)
\end{equation}
where $a=(1-\sqrt{y})^2$ and $b=(1+\sqrt{y})^2$.
Let us also record the particular case $y=1$ of the standard Marchenko-Pastor law $\mathsf{MP}$ separately for convenience.

\begin{equation}
\label{e:mp1}
d\mathsf{MP}(x)= \frac{1}{2\pi x} \sqrt{x(4-x)}~dx; \qquad x\in (0,4).
\end{equation}

Using \eqref{e:mp1} we can now sketch a quick proof of 
\begin{equation}
\label{e:ldpeigen}
\lim_{N\to \infty} \log \frac{\P_{N}(\lambda_1\geq (4+\delta))}{N}=-I(\delta)
\end{equation}
using the so called Coulomb gas methods. Together with Proposition \ref{p:eigen}, the above  immediately proves Theorem \ref{t:ldp} . Our arguments will require a quantitative refinement of the same. Let $\underline{\lambda}^{(1)}= (\lambda_2, \ldots , \lambda_{N})$ and $V(\lambda_1; \underline{\lambda}^{(1)}):=\prod_{j\neq 1}(\lambda_1-\lambda_j)$. Using \eqref{e:density} we write down $\P(\lambda_1\geq (4+\delta))$ below.
\begin{eqnarray*}
\P(\lambda_1 \geq (4+\delta)) &=&\int_{\lambda_1\ge (4+\delta)}f_{N}(\underline{\lambda}) d\underline{\lambda}\\
&=& \frac{Z_{N-1,N-1}}{Z_{N,N}}\int_{\lambda_1\ge (4+\delta)} e^{-N\lambda_1}\left( \int_{\underline{\lambda}^{(1)}:\lambda_2\leq \lambda_1} V(\lambda_1; \underline{\lambda}^{(1)})^2 e^{-\sum_{i=2}^N\lambda_i}f_{N-1,N-1} d\underline{\lambda}^{(1)}\right) d\lambda_1\\
&=& \frac{Z_{N-1,N-1}}{Z_{N,N}}\int_{\lambda_1\ge (4+\delta)} \int_{\underline{\lambda}^{(1)}:\lambda_2\leq \lambda_1} \exp \left(N (-\lambda_1 + \frac{2\log V(\lambda_1; \underline{\lambda}^{(1)})}{N} + \frac{1}{N} \sum_{2}^{N} \lambda_i)  \right) d\underline{\lambda}^{(1)}d\lambda_1. 
\end{eqnarray*}

Now, using the fact the empirical spectral measure is close to $\mathsf{MP}$ it follows that for large $N$, the inside integral is

$$\approx \exp\left (N (-\lambda_1+ 2\int \log (\lambda_1-x)d\mathsf{MP}(x)+ \int x d\mathsf{MP}(x)) \right),$$ where $\approx$ is used to denote approximately whose meaning we will not be making precise since this is supposed to only be a sketch of the argument.
Observe that the term inside the bracket is maximized at $\lambda_1=(4+\delta)$ with exponential decay beyond that, and hence it follows that 

$$\log \P(\lambda_1 \geq (4+\delta)) \approx \log \frac{Z_{N-1,N-1}}{Z_{N,N}}+ N\left(-(4+\delta) + 2\int \log (4+\delta-x)d\mathsf{MP}(x)+ \int x d\mathsf{MP}(x)\right).$$
Observing from \eqref{e:partition} that 
\begin{equation}
\label{e:partitionf}
\log \frac{Z_{N-1,N-1}}{Z_{N,N}}= 3N+O(1)
\end{equation}
and from \eqref{e:mpy} that 
\begin{equation}
\label{e:mpmean}
\int x d\mathsf{MP}_y(x)=1
\end{equation}
for all $y\in (0,1]$, \eqref{e:ldpeigen} follows. To make the above argument rigorous, one needs certain estimates of how close the empirical spectral distribution is to the Marchenko-Pastur law. We shall need a quantitative variant of the above which also works for rectangular Wishart matrices, i.e., when $M>N$, as long as $M-N=o(N)$.  Furthermore, the proof of Theorem \ref{t:mainl} will depend on certain precise polynomial correction given by the extra log factor in the exponent in Theorem \ref{t:ldpm=n}.  Below we record a number of known facts about the empirical spectral distribution of complex Wishart matrices that we will use later. 

\subsection{Eigenvalue Rigidity for Wishart Matrices and its consequences}
First we need a result to show that linear statistic of the eigenvalues are concentrated around their expectation under certain assumption. To this end we have the following sub-Gaussian concentration result for Lipschitz functionals from \cite{GZ00} which relies on the Gaussian Log-sobolev inequality. Recall the basic set-up. Let $W=\frac{1}{M}
X^*X$ be a (scaled) complex Wishart matrix with eigenvalues $\lambda_1>\cdots > \lambda_{N}$ where $X$ is an $M\times N$ matrix of i.i.d.\ standard complex Gaussian entries. 
For a function $f:\R\to \R$ let us define 
$$\tr(f):=\frac{1}{N}\sum f(\lambda_i).$$

\begin{theorem}[\cite{GZ00}]
\label{t:conc} 
For any Lipschitz $f$, there exists $C>0$ depending on the Lipschitz constant of $f$ such that for all $M,N$ and all $\delta>0$ we have
$$\P\left(|\tr(f)-\E(\tr(f))|\ge \delta\frac{M+N}{N}\right)\le e^{-C\delta^2(M+N)^2}.$$
\end{theorem}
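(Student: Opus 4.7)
\emph{Plan.} The statement is a prototypical consequence of Herbst's concentration argument from the Gaussian log--Sobolev inequality. The plan is to view $F(X) := \tr(f) = \frac{1}{N}\sum_i f(\lambda_i)$ as a real valued function on $\R^{2MN}$ of the independent real and imaginary parts of the Gaussian entries of $X$ (each an $N(0,1/2)$ variable), bound its Lipschitz constant, and invoke the standard sub-Gaussian concentration of measure for the Gaussian measure on $\R^{2MN}$.

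The Lipschitz bound is obtained via Mirsky's inequality for singular values. Writing $\tilde X = X/\sqrt M$ so that $\lambda_i = s_i(\tilde X)^2$, Mirsky gives $\sum_i (s_i(\tilde X) - s_i(\tilde Y))^2 \leq \|\tilde X - \tilde Y\|_{\mathrm{HS}}^2$, which combined with the Lipschitzness of $f$, the identity $a^2 - b^2 = (a-b)(a+b)$, and Cauchy--Schwarz yields
\[
|F(X) - F(Y)| \;\leq\; \frac{L_f\sqrt 2}{N\sqrt M}\,\|X - Y\|_{\mathrm{HS}}\, \bigl(\|\tilde X\|_{\mathrm{HS}}^2 + \|\tilde Y\|_{\mathrm{HS}}^2\bigr)^{1/2},
\]
where $L_f$ denotes the Lipschitz constant of $f$. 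The right hand side is not bounded globally, so $F$ is only locally Lipschitz, with effective constant of order $L_f/\sqrt{NM}$ on the event $\cG := \{\|X\|_{\mathrm{HS}}^2 \leq 2MN\}$. Since $\|X\|_{\mathrm{HS}}^2$ is a $\chi^2$ variable with $2MN$ degrees of freedom and mean $MN$, standard $\chi^2$ concentration yields $\P(\cG^c) \leq e^{-cMN}$, and one may therefore replace $F$ by a McShane--Whitney Lipschitz extension $\tilde F$ which agrees with $F$ on $\cG$ and is globally $(CL_f/\sqrt{NM})$-Lipschitz on all of $\R^{2MN}$.

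Applying the Gaussian concentration inequality to $\tilde F$ produces $\P(|\tilde F - \E\tilde F| \geq u) \leq 2\exp(-c\, u^2 NM / L_f^2)$; setting $u = \delta(M+N)/N$ and using $M \geq N$, so that $u^2 NM \geq \delta^2(M+N)^2$, gives exactly the claimed decay rate for $\tilde F$, and transferring back to $F$ costs only the negligible probability $\P(\cG^c) \leq e^{-cMN}$ (which can be absorbed into the constant $C$). The one real obstacle is precisely the non-global Lipschitz behaviour arising from the quadratic map $s \mapsto s^2$: the factor $s_i(\tilde X)+s_i(\tilde Y)$ in the Lipschitz bound grows with the operator norm of $X$. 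The truncation step above is the clean way around this; equivalently one could appeal to the refined form of Herbst's inequality that applies directly to locally Lipschitz functions with controlled expected gradient. Everything else in the argument is routine.
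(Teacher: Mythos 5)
Your overall route --- viewing $\tr(f)$ as a function of the $2MN$ real Gaussian coordinates, bounding its modulus of continuity via Mirsky's inequality together with $a^2-b^2=(a-b)(a+b)$, and then running the Herbst/Gaussian-concentration argument --- is exactly the mechanism behind the cited result of Guionnet--Zeitouni (the paper itself gives no proof, only the citation), and your Lipschitz estimate $|F(X)-F(Y)|\le \frac{L_f\sqrt2}{N\sqrt M}\|X-Y\|_{\mathrm{HS}}\bigl(\|\tilde X\|_{\mathrm{HS}}^2+\|\tilde Y\|_{\mathrm{HS}}^2\bigr)^{1/2}$ and the reduction $u^2NM\ge \delta^2(M+N)^2$ for $u=\delta(M+N)/N$, $M\ge N$, are correct.

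The genuine gap is the last step, where you claim the truncation cost $\P(\cG^c)\le e^{-cMN}$ ``can be absorbed into the constant $C$.'' The target bound is $e^{-C\delta^2(M+N)^2}$ \emph{for all} $\delta>0$, and an additive error that does not depend on $\delta$ cannot be dominated by it once $\delta^2(M+N)^2\gg MN$, i.e.\ once $\delta$ exceeds a constant; as written your argument proves the inequality only for $\delta\le \delta_0(L_f)$ of order one. Moreover this is not a removable technicality in the stated generality: for $f(x)=x$ (Lipschitz constant $1$), $\tr(f)=\frac1{NM}\sum_{i,j}|X_{ij}|^2$ is an average of $MN$ i.i.d.\ $\mathrm{Exp}(1)$ variables, so $\P\bigl(\tr(f)-1\ge \delta\tfrac{M+N}{N}\bigr)\ge e^{-MN(1+\delta\frac{M+N}{N})}$, which for large $\delta$ is far bigger than $e^{-C\delta^2(M+N)^2}$; the Gaussian-in-$\delta$ tail simply fails for unbounded Lipschitz $f$ at large $\delta$, so no choice of truncation level (your suggested $\delta$-dependent threshold inflates the local Lipschitz constant and degrades the exponent to order $\delta(M+N)^2$) can close the gap. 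The honest conclusions available are: the stated bound for $\delta$ in a bounded range, a bound with $\min(\delta,\delta^2)$ in the exponent, or the full statement under the stronger hypothesis that $x\mapsto f(x^2)$ is Lipschitz (the form in which such results are usually quoted). This restricted version is all the paper actually uses, since Theorem \ref{t:conc} is applied to the bounded Lipschitz functions $g_{\lambda_1}$, for which deviations beyond $2\|g_{\lambda_1}\|_\infty$ have probability zero. Finally, a smaller omission: after replacing $F$ by the extension $\tilde F$ you must also control $|\E F-\E\tilde F|$ (they differ on $\cG^c$); this is routine --- Cauchy--Schwarz plus the quadratic growth of $F$ gives an exponentially small shift --- but it should be said, since the concentration you invoke is around $\E\tilde F$, not around $\E(\tr(f))$.
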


Observe that Theorem \ref{t:conc} provides concentration of the empirical spectral measure around its mean. However for our purposes we need to approximate the expected empirical spectral measure with the ``limiting spectral measure"  $d\mathsf{MP}_{y}$ where $y=\frac{N}{M}$ (this is an abuse of terminology since $y$ in $N$ dependent) in a quantitative manner. To this end we record the following two results. 

\begin{theorem}[\cite{GT14}]
\label{t:gotzeks}
Let $M=N$ and let $\mathsf{ESM}$ denote the expected empirical spectral distribution of $W$. There exists an absolute constant $C$ such that 
$d_{{\rm KS}}(\mathsf{ESM}, \mathsf{MP})\leq CN^{-1}$ for all $N$ where $d_{{\rm KS}}(\cdot, \cdot)$ denote the Kolmogorov-Smirnov distance between two distributions. 
\end{theorem}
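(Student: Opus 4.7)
The plan is to prove Theorem \ref{t:gotzeks} via the Stieltjes transform method, exploiting the explicit determinantal (Laguerre Unitary Ensemble) structure available in the $M=N$ case. Let $\bar{s}_N(z):=\int \frac{d\mathsf{ESM}(x)}{x-z}$ for $z\in \mathbb{C}^+$, and let $m(z)$ denote the Stieltjes transform of $\mathsf{MP}$. Then $m(z)$ is characterized by the self-consistent equation $zm(z)^2+(z-1)m(z)+1=0$ together with the Herglotz condition $\mathrm{Im}\,m(z)>0$ on the upper half-plane. The first step is to derive an \emph{approximate} self-consistent equation for $\bar{s}_N(z)$ of the form
\[
z\bar{s}_N(z)^2+(z-1)\bar{s}_N(z)+1=\varepsilon_N(z),
\]
with a quantitative bound $|\varepsilon_N(z)|=O(N^{-2}\eta^{-k})$ for $\eta=\mathrm{Im}(z)$ not too small. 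This can be done by expanding the resolvent $(W-z)^{-1}$ via Schur complements row-by-row and applying Gaussian integration by parts to the entries of $X$, or equivalently by using the three-term recurrence and Christoffel--Darboux identity for the Laguerre kernel $K_N$ that represents the expected density $\rho_N(x)=\frac{1}{N}K_N(x,x)$.

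The second step is to transfer this approximate self-consistent equation into a pointwise Stieltjes bound $|\bar{s}_N(z)-m(z)|\lesssim N^{-1}\Phi(z)$, using stability of the MP fixed-point relation. Linearizing the equation around $m(z)$, the stability factor is controlled by $|2zm(z)+(z-1)|^{-1}$, which stays bounded away from zero in the bulk but degrades as $z$ approaches the edges $0$ or $4$. With a careful choice of domain $\{\mathrm{Re}\,z\in[-1,5],\ \eta\in[N^{-1},1]\}$, standard bootstrap arguments close the estimate and yield the desired Stieltjes bound. The third step is a quantitative Stieltjes inversion: applying Bai's inequality (or a Helffer--Sjöstrand representation with a suitable cutoff) converts the Stieltjes transform estimate into the Kolmogorov--Smirnov bound $d_{\mathrm{KS}}(\mathsf{ESM},\mathsf{MP})\le CN^{-1}$.

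The main obstacle will be obtaining the \emph{uniform} $N^{-1}$ rate at the soft edges $x=0$ and $x=4$, where the MP density vanishes as a square root and the stability constant in the self-consistent equation blows up. Naive arguments valid for generic Wigner-type ensembles yield only $N^{-2/3}$ near the edge. To recover the full $N^{-1}$ rate, one should exploit the integrable structure specific to LUE: the Plancherel--Rotach asymptotics for Laguerre polynomials give sharp pointwise comparisons between $\rho_N(x)$ and the MP density, including near the edge, and integrating these with cancellations handled via integration by parts yields edge contributions of order $N^{-1}$ rather than $N^{-2/3}$. An alternative route is to use an averaged-in-$\eta$ Stieltjes bound, where integration over a short vertical segment recovers the extra factor needed near the edge. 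Either way, it is the exact solvability, not merely a universality argument, that is responsible for achieving the stated rate.
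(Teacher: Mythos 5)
First, a point of comparison: the paper does not prove Theorem \ref{t:gotzeks} at all — it is imported verbatim from G\"otze--Tikhomirov \cite{GT14}, so there is no internal argument to match, and for the purposes of this paper citing that work is all that is required. Your sketch is indeed in the same spirit as how such results are proved in the literature (a quantitative Stieltjes-transform analysis plus a smoothing/inversion inequality), but as a standalone proof it contains a concrete error and leaves the genuinely hard steps as assertions.

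The error: for $M=N$ the Marchenko--Pastur law $d\mathsf{MP}(x)=\frac{1}{2\pi}\sqrt{(4-x)/x}\,dx$ does not vanish like a square root at $x=0$; it diverges like $x^{-1/2}$ there. Thus $x=0$ is a \emph{hard} edge, not a soft edge, and the remedy you propose at that endpoint — Plancherel--Rotach soft-edge (Airy-type) asymptotics for the Laguerre kernel — is the wrong expansion; one needs Bessel-kernel/hard-edge asymptotics, and the stability factor $|2zm(z)+(z-1)|^{-1}$ degenerates there in a different way than at $x=4$. This hard edge is exactly the delicate feature of the square case. Relatedly, the inversion step is not routine: Bai's inequality charges an error of the form $\eta^{-1}\sup_x\int_{|u|\le C\eta}\bigl|F_{\mathsf{MP}}(x+u)-F_{\mathsf{MP}}(x)\bigr|\,du$, and near $x=0$ the MP distribution function is only H\"older-$1/2$, so with $\eta\asymp N^{-1}$ this term alone is of order $N^{-1/2}$; recovering $N^{-1}$ requires either going to much smaller $\eta$ (where your claimed bound $|\varepsilon_N(z)|=O(N^{-2}\eta^{-k})$ is no longer effective) or an inversion scheme adapted to the singularity. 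Finally, the quantitative inputs that carry the proof — the error estimate in the approximate self-consistent equation for the expected Stieltjes transform, its uniformity down to the relevant scales, and the bootstrap near both edges — are stated rather than established, and they constitute essentially the entire content of \cite{GT14}. So as written the proposal does not yet yield the $CN^{-1}$ bound; the efficient and correct move here is simply to quote \cite{GT14}, as the paper does.
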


We shall use this Theorem to show that for  specific choices of sufficiently nice functions $f$, we have 
$$\left |  \int f ~d\mathsf{ESM} - \int f~ d\mathsf{MP} \right |=O(N^{-1}).$$
Note that this would follow immediately, using integration by parts, for functions with $||f'||_1<\infty$. Unfortunately, in our case the function $f$ will be almost linear and hence we cannot apply that directly. We shall need certain tail estimates on $\mathsf{ESM}$ to get around that issue. Towards this we first record the following standard tail estimate. 

\begin{lemma}
\label{standconc}
There exists $c>0$ such that for any $L>4.5$, and $N,$ $$\P(\lambda_1>L)\le e^{-cLN}.$$
\end{lemma}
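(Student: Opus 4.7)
My plan is to combine the operator-norm identity $\lambda_1 = M^{-1}\|X\|_{\mathrm{op}}^2$ with Gaussian concentration of measure. The map $X \mapsto \|X\|_{\mathrm{op}}$ is $1$-Lipschitz with respect to the Frobenius norm, and for an $M \times N$ standard complex Gaussian matrix one has the classical operator-norm estimate $\E\|X\|_{\mathrm{op}} \le \sqrt{M} + \sqrt{N}$. A quick route to the latter is to identify $X$ with a $2M \times 2N$ real Gaussian matrix of entry variance $\tfrac{1}{2}$ having a block structure that preserves $\|\cdot\|_{\mathrm{op}}$, and then invoke Gordon's inequality. Since the real coordinates of $X$ are i.i.d.\ $N(0,\tfrac{1}{2})$, standard Gaussian concentration for Lipschitz functions gives
\[
\P(\|X\|_{\mathrm{op}} \ge \E\|X\|_{\mathrm{op}} + t) \le e^{-t^2} \qquad \text{for all } t > 0.
\]

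Setting $t = \sqrt{LM} - (\sqrt{M} + \sqrt{N})$, which is nonnegative once $L > 4$ (using $M \ge N$) and is bounded below by $\sqrt{M}(\sqrt{L} - 2)$, I obtain
\[
\P(\lambda_1 > L) = \P(\|X\|_{\mathrm{op}}^2 > LM) \le \exp\bigl(-M (\sqrt{L} - 2)^2\bigr).
\]
It remains to upgrade this, which is currently of the form $e^{-\psi(L) M}$, to the desired $e^{-cLN}$ with a constant $c$ uniform in $L > 4.5$. This reduces to the elementary observation that the ratio $\varphi(L) := (\sqrt{L}-2)^2/L = 1 - 4/\sqrt{L} + 4/L$ has derivative $\varphi'(L) = 2(\sqrt{L}-2)/L^2 > 0$ for $L > 4$, so $\varphi$ is strictly increasing on $(4, \infty)$ and strictly positive there. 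Consequently $c := \varphi(4.5) > 0$ satisfies $(\sqrt{L}-2)^2 \ge c L$ for every $L > 4.5$, and combining with $M \ge N$ yields $\P(\lambda_1 > L) \le \exp(-cLN)$ as desired.

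There is no serious obstacle here; the only non-routine point is uniformity of $c$ in $L$, handled by the monotonicity of $\varphi$ above. An alternative route, essentially equivalent in difficulty, would be an $\varepsilon$-net on the unit sphere in $\mathbb{C}^N$ of cardinality $(C/\varepsilon)^{2N}$ combined with the $\chi^2_{2M}$-tail bound $\P(\|Xv\|^2 > Mt) \le e^{-M(t - 1 - \log t)}$ applied at each net point; the net prefactor is readily absorbed into the $\chi^2$ tail for $L$ bounded away from $4$, but the Lipschitz-concentration route gives a cleaner explicit constant.
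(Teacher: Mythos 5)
Your strategy (concentration of $\|X\|_{\mathrm{op}}$ around its mean plus a sharp mean bound) is genuinely different from the paper's sketch, which handles large $L$ by the LPP identity and a union bound over lattice paths and the remaining range by Talagrand-type concentration or the subadditivity bound of Proposition \ref{t:diaguniform}. Your concentration step and the closing arithmetic are fine, but the pivotal input $\E\|X\|_{\mathrm{op}}\le\sqrt{M}+\sqrt{N}$ is not justified by the argument you give. Gordon's (Slepian--Sudakov--Fernique) bound applies to a matrix with i.i.d.\ real Gaussian entries; the $2M\times 2N$ block embedding of $X=A+iB$ has strongly dependent entries (each Gaussian appears twice), and the comparison hypothesis genuinely fails for it. Concretely, writing $Z_{u,v}=\mathrm{Re}(u^*Xv)$ for complex unit vectors and $Y_{u,v}=\mathrm{Re}\langle g,u\rangle+\mathrm{Re}\langle h,v\rangle$ for the usual comparison process, the needed inequality $\E(Z_{u,v}-Z_{u',v'})^2\le\E(Y_{u,v}-Y_{u',v'})^2$ reduces to $\mathrm{Re}\,\alpha+\mathrm{Re}\,\beta-\mathrm{Re}(\bar\alpha\beta)\le 1$ with $\alpha=\langle u,u'\rangle$, $\beta=\langle v,v'\rangle$ in the unit disc; taking $u',v'$ to be phase rotations of $u,v$ with $\alpha=e^{i\pi/4}$, $\beta=e^{-i\pi/4}$ makes the left side $\sqrt2>1$. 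So ``invoke Gordon'' is simply not available in the complex setting; the sharp constant there is normally extracted from the determinantal/moment structure of the LUE (e.g.\ the Haagerup--Thorbj{\o}rnsen bound $\E\lambda_{\max}(X^*X)\le(\sqrt M+\sqrt N)^2$, or Ledoux--Rider deviation estimates), not from a Gaussian comparison.

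This gap is serious rather than cosmetic because the constant is critical at the threshold $L=4.5$: for $M=N$ you need $\E\|X\|_{\mathrm{op}}<\sqrt{4.5N}\approx 2.12\sqrt N$, whereas the elementary reductions that are actually licensed (splitting $X=\tfrac{1}{\sqrt2}(G_1+iG_2)$, or dominating the block matrix by an i.i.d.\ $2M\times 2N$ Gaussian) only give $\sqrt2(\sqrt M+\sqrt N)\approx 2.83\sqrt N$, so your bound would break on the whole range $4.5<L\le 8$ --- precisely where the lemma has content. The $\varepsilon$-net fallback you mention suffers the same defect: at $L=4.5$ the entropy term $2N\log(1+2/\varepsilon)$ exceeds the Gamma-tail exponent $M\bigl((1-\varepsilon)^2L-1-\log((1-\varepsilon)^2L)\bigr)$ for every choice of $\varepsilon$, so it too only works for large $L$. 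The proof becomes correct if you replace the Gordon step by a citation of a complex-specific bound such as $\E\lambda_{\max}(X^*X)\le(\sqrt M+\sqrt N)^2$ (then conclude via Jensen and your concentration step), or if you treat $4.5<L\le L_0$ by the paper's route --- subadditivity giving $\log\P(\lambda_1>4+\delta)\le -NI(\delta)+O(4+\delta)$ together with $I(\delta)\ge c(1+\delta)$ for $\delta\ge 1/2$ --- and keep a path union bound only for $L\ge L_0$.
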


We shall omit the standard proof of the above fact. Observe however that for $L$ sufficiently large, one can prove this by appealing to Proposition \ref{p:eigen} and taking a union bound over all paths. The general case can be proved by appealing to concentration of measure results such as in \cite{talagrand}. A more refined version of this also proved using sub-additivity later in Proposition \ref{t:diaguniform}. The following lemma is an almost immediate consequence of the above result. 
\begin{lemma}
\label{tail}
There exists a constant $C$ such that $$\mathsf{ESM}[x,\infty]\le e^{-CN x} $$
for all $x\ge 4.5$ and $N$. 
\end{lemma}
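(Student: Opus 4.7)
The plan is to observe that $\mathsf{ESM}[x,\infty]$ is essentially just $\P(\lambda_1\ge x)$ up to a harmless factor, and then invoke Lemma \ref{standconc} directly. Concretely, by the definition of the expected empirical spectral measure,
\[
\mathsf{ESM}[x,\infty] \;=\; \frac{1}{N}\,\E\!\left[\sum_{i=1}^{N}\mathbf{1}[\lambda_i\ge x]\right].
\]
Since all eigenvalues are dominated by $\lambda_1$, on the event $\{\lambda_1<x\}$ the inner sum vanishes, and on the event $\{\lambda_1\ge x\}$ it is at most $N$. Hence the inner sum is bounded pointwise by $N\cdot\mathbf{1}[\lambda_1\ge x]$, so the $\frac{1}{N}$ normalization cancels and we obtain
\[
\mathsf{ESM}[x,\infty] \;\le\; \P(\lambda_1\ge x).
\]

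Once this is in hand, the statement follows immediately from Lemma \ref{standconc}: for every $x\ge 4.5$ we have $\P(\lambda_1\ge x)\le e^{-cxN}$ with the absolute constant $c>0$ furnished by that lemma, so one may take $C=c$ (with a cosmetic adjustment at the boundary $x=4.5$, e.g.\ replacing $c$ by any slightly smaller constant, to handle the non-strict inequality).

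There is really no obstacle here; the only thing to notice is the simple pointwise bound $\#\{i:\lambda_i\ge x\}\le N\cdot\mathbf{1}[\lambda_1\ge x]$, which converts a bound on the top eigenvalue into the desired bound on the expected spectral measure of $[x,\infty)$ without any loss of a polynomial factor in $N$. The content of the lemma is therefore entirely carried by the input Lemma \ref{standconc}, and this short chain of inequalities is the whole proof.
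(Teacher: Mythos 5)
Your proposal is correct and follows essentially the same route as the paper: both reduce $\mathsf{ESM}[x,\infty]$ to a bound in terms of $\P(\lambda_1\ge x)$ and then invoke Lemma \ref{standconc}, with a trivial adjustment of the constant to absorb the boundary case. In fact your pointwise bound $\#\{i:\lambda_i\ge x\}\le N\cdot\mathbf{1}[\lambda_1\ge x]$ is slightly tighter than the paper's stated inequality $\mathsf{ESM}[x,\infty]\le N\P(\lambda_1\ge x)$, but the argument is the same.
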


\begin{proof} The proof follows from Lemma \ref{standconc} and the bound that $\mathsf{ESM}[x, \infty]\le N\P(\lambda_1\ge x).$
\end{proof}

Our next result uses the previous two lemmas to control $\left |  \int f ~d\mathsf{ESM} - \int f~ d\mathsf{MP} \right |$.

\begin{proposition}
\label{l:diffcontrol}
Let $f$ be a continuous and piecewise $C^1$ function on $[0,\infty)$ such that $||f'||_{\infty}\leq \infty$. Then there exists absolute constants $C_{3}$ and $C_{4}$ independent of $f$ such that
$$\left |  \int f ~d\mathsf{ESM} - \int f~ d\mathsf{MP} \right |\le \frac{C_{3}A_{f}}{N}+ B_{f} e^{-C_4N}$$
where $A_{f}=\int_{0}^{5} |f'(x)|~dx$ and $B_f=||f'||_{\infty}$.
\end{proposition}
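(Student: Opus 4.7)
The plan is to split the integration region into a compact part $[0,5]$, where one can control the difference of the two measures via their Kolmogorov--Smirnov distance (Theorem \ref{t:gotzeks}), and a tail $(5,\infty)$, where $\mathsf{MP}$ contributes nothing and $\mathsf{ESM}$ is exponentially small by Lemma \ref{tail}.

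First, I would reduce to the case $f(0)=0$ by replacing $f$ with $f-f(0)$, which changes neither side of the inequality (both $\mathsf{ESM}$ and $\mathsf{MP}$ are probability measures, and $A_f, B_f$ involve only $f'$). With this normalization, $|f(x)|\le A_f$ for every $x\in[0,5]$, which will matter for controlling boundary terms. Since $\mathsf{MP}$ is supported on $[0,4]$, write
\[
\int f\, d\mathsf{ESM}-\int f\, d\mathsf{MP}=\int_{[0,5]} f\, d(\mathsf{ESM}-\mathsf{MP})+\int_{(5,\infty)} f\, d\mathsf{ESM}.
\]

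For the compact part, set $G(x):=F_{\mathsf{ESM}}(x)-F_{\mathsf{MP}}(x)$; by Theorem \ref{t:gotzeks}, $\|G\|_\infty\le C/N$. Since $f$ is continuous and piecewise $C^1$, Riemann--Stieltjes integration by parts gives
\[
\int_{0}^{5} f\, dG = f(5)G(5)-f(0)G(0)-\int_{0}^{5} G(x)f'(x)\, dx,
\]
and the normalization $f(0)=0$ together with $|f(5)|\le A_f$ and $|G(x)|\le C/N$ bounds the right-hand side by $(C A_f/N)+(C/N)A_f\le C_3 A_f/N$.

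For the tail, the bound $|f(x)|\le |f(5)|+B_f(x-5)\le A_f+B_f(x-5)$ for $x\ge 5$ combines with Lemma \ref{tail} as follows. Using Fubini,
\[
\left|\int_{5}^{\infty} f\, d\mathsf{ESM}\right|\le A_f\,\mathsf{ESM}[5,\infty)+B_f\int_{5}^{\infty}\mathsf{ESM}[t,\infty)\, dt\le A_f e^{-5CN}+\frac{B_f}{CN}e^{-5CN}.
\]
Finally absorb $A_f\le 5B_f$ to rewrite this as $B_f e^{-C_4 N}$ for a suitable $C_4>0$. Summing the two bounds yields the claim.

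The proof is largely mechanical; the only mild subtlety is the rigor of the Stieltjes integration by parts for a piecewise $C^1$ integrand, which is handled by decomposing $[0,5]$ into the finitely many intervals on which $f'$ is continuous and summing the local identities (the interior boundary contributions telescope via continuity of $f$). I do not expect any real obstacle beyond this bookkeeping.
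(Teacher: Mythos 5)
Your proof is correct and follows essentially the same route as the paper's: control the contribution on $[0,5]$ by integration by parts against the Kolmogorov--Smirnov bound of Theorem \ref{t:gotzeks}, and the contribution beyond $5$ (where $\mathsf{MP}$ has no mass) by the exponential tail bound of Lemma \ref{tail}. The differences are cosmetic --- you split the domain and normalize $f(0)=0$, while the paper splits $f=f_1+f_2$ into a truncated part and a tail part --- and your treatment of the boundary terms is, if anything, slightly more explicit.
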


\begin{proof}
Write $f=f_1+f_2$ where $f_1$ is a continuous function which agrees with $f$ on $[0,5]$ and is constant on $[5,\infty].$
Thus 
\begin{align*}
\left |  \int f ~d\mathsf{ESM} - \int f~ d\mathsf{MP} \right |\le \left |  \int f_1 ~d\mathsf{ESM} - \int f_1~ d\mathsf{MP} \right |+\left |  \int f_2 ~d\mathsf{ESM} - \int f_2~ d\mathsf{MP} \right |,
\end{align*} 
Now note that $\int f_2~ d\mathsf{MP}=0$ since their supports are disjoint. Thus the result follows from the following two lemmas which bound each of the above terms.
\end{proof}

\begin{lem}$\left |  \int f_1 ~d\mathsf{ESM} - \int f_1~ d\mathsf{MP} \right |=O(\frac{A_{f}}{N}).$
\end{lem}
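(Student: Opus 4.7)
The plan is to rewrite the difference $\int f_1 \, d\mathsf{ESM} - \int f_1 \, d\mathsf{MP}$ as a single integral involving the difference of the cumulative distribution functions, and then use the Kolmogorov--Smirnov bound from Theorem \ref{t:gotzeks} to control it. Concretely, let $F_{\mathsf{ESM}}$ and $F_{\mathsf{MP}}$ denote the CDFs of $\mathsf{ESM}$ and $\mathsf{MP}$ respectively. Since $f_1$ is continuous, piecewise $C^1$, and constant on $[5,\infty)$, writing $f_1(x)=f_1(0)+\int_0^x f_1'(t)\,dt$ and applying Fubini to both $\int f_1 \, d\mathsf{ESM}$ and $\int f_1 \, d\mathsf{MP}$ gives
\[
\int f_1 \, d\mathsf{ESM} - \int f_1 \, d\mathsf{MP} = \int_0^\infty f_1'(t) \bigl(F_{\mathsf{MP}}(t) - F_{\mathsf{ESM}}(t)\bigr)\, dt.
\]
The constant terms $f_1(0)$ cancel since both measures are probability measures.

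Next, since $f_1$ is constant on $[5,\infty)$, the derivative $f_1'$ is supported on $[0,5]$. Thus the integral reduces to one over $[0,5]$, and we estimate
\[
\left| \int f_1 \, d\mathsf{ESM} - \int f_1 \, d\mathsf{MP} \right| \leq \int_0^5 |f_1'(t)|\, dt \cdot \sup_{t \geq 0} \bigl| F_{\mathsf{ESM}}(t) - F_{\mathsf{MP}}(t) \bigr| = A_f \cdot d_{\mathrm{KS}}(\mathsf{ESM}, \mathsf{MP}).
\]
Applying Theorem \ref{t:gotzeks}, which gives $d_{\mathrm{KS}}(\mathsf{ESM}, \mathsf{MP}) \leq C N^{-1}$, the result follows with the constant $C_3 = C$. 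The argument is entirely routine once the integration by parts is set up correctly; no serious obstacle is anticipated, since the truncation of $f$ to $f_1$ was designed precisely to ensure $f_1'$ is compactly supported so that the $L^\infty$ bound from $d_{\mathrm{KS}}$ can be converted into an $L^1$ estimate weighted by $A_f$.
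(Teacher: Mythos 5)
Your proposal is correct and follows essentially the same route as the paper: an integration-by-parts/Fubini identity converting the difference of integrals into $\int f_1'\,(F_{\mathsf{MP}}-F_{\mathsf{ESM}})$, followed by the Kolmogorov--Smirnov bound of Theorem \ref{t:gotzeks} and the fact that $f_1'$ is supported on $[0,5]$ with $\int_0^5|f_1'|=A_f$. Your version is in fact slightly more explicit about why the boundary/constant terms cancel, but the argument is the same.
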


\begin{proof}
By integration by parts \begin{equation}
\label{e:ksapp}
\left |  \int f_1 ~d\mathsf{ESM} - \int f_1~ d\mathsf{MP} \right|\le \left |  \int |f'_1| ~\left|\mathsf{ESM}(x) -\mathsf{MP}(x)\right| \right|=O\left(\frac{1}{n}\right)\int_{0}^5 |f_1'|.
\end{equation}
Here we ignore the boundary terms because $\mathsf{ESM}(x)\to 1$ as $x\to \infty$ and $\mathsf{MP}(x)=1$ for all $x\ge 4.$ and $f_1$ is bounded. The $1/n$ term is then by Theorem \ref{t:gotzeks}.
\end{proof}
\begin{lem}$\int_{5}^{\infty} f_2 ~d\mathsf{ESM}=O(B_f e^{-CN}).$
\end{lem}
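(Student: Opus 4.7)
The plan is to bound $|f_2(x)|$ pointwise for $x \geq 5$ and then integrate against the tail of $\mathsf{ESM}$ using Lemma \ref{tail}.

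First, I would observe that by the construction of $f_1$, the function $f_2 = f - f_1$ vanishes identically on $[0,5]$, so the integral is really over $[5,\infty)$. Moreover, $f_2(5) = 0$, and since $f_1$ is constant on $[5,\infty)$, we have $f_2'(x) = f'(x)$ on that interval, so $\|f_2'\|_\infty \leq B_f$. Combining these gives the linear pointwise bound
$$|f_2(x)| \leq B_f (x-5), \qquad x \geq 5.$$

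Next I would convert the integral into a tail integral via Fubini / integration by parts:
$$\left|\int_5^\infty f_2 \, d\mathsf{ESM}\right| \leq B_f \int_5^\infty (x-5) \, d\mathsf{ESM}(x) = B_f \int_5^\infty \mathsf{ESM}[x,\infty) \, dx.$$
Then I would apply Lemma \ref{tail}, which gives $\mathsf{ESM}[x,\infty) \leq e^{-CNx}$ for all $x \geq 4.5$, to conclude
$$B_f \int_5^\infty e^{-CNx} \, dx = B_f \cdot \frac{e^{-5CN}}{CN} = O(B_f \, e^{-C'N}),$$
for some constant $C' > 0$ (one can absorb the $1/N$ prefactor into a slightly smaller exponent for $N$ large, or simply keep the constant $C' = 5C - 1$ for $N \geq 1$).

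There is no serious obstacle here: the construction of $f_2$ makes it small near $5$ (it vanishes there), and the exponentially small eigenvalue tail from Lemma \ref{tail} dwarfs the at-most-linear growth of $f_2$. The only minor care needed is to confirm the validity of the integration-by-parts step, which holds because $\mathsf{ESM}$ has total mass $1$ and $(x-5)\mathsf{ESM}[x,\infty) \to 0$ as $x \to \infty$ (again by Lemma \ref{tail}), so no boundary terms survive.
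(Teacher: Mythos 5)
Your proof is correct and follows essentially the same route as the paper: both arguments exploit that $f_2$ vanishes on $[0,5]$ and grows at most linearly with slope $\le B_f$, convert the integral into one against the tail $\mathsf{ESM}[x,\infty)$ (you via Fubini, the paper via integration by parts, which is the same maneuver), and then invoke the exponential tail bound of Lemma \ref{tail} to conclude.
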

\begin{proof}Since translating the measure $\mathsf{ESM}$ by $1$ does not change the above integral, by integration by parts, the above is  $\int_{5}^{\infty}f_2' (1-\mathsf{ESM}([0,x])dx$ .
The first boundary term is $f_2(5)(1-\mathsf{ESM}([0,5])=0;$ the other boundary term vanishes as well since $f_2$ grows at most linearly while $(1-\mathsf{ESM}([0,x])$ decays exponentially by Lemma \ref{tail}.
\end{proof}

Observe that together with Theorem \ref{t:conc} this will imply (for sufficiently nice $f$)
\begin{equation}
\label{e:o1}
\sum_{i=1}^{N} f(\lambda_i)- N\int f~ d\mathsf{MP}= O(1)
\end{equation}
with high probability. This is a quantitative variant of the well-known CLT smooth linear statistics for Wishart matrices \cite{Bai}. For our problem we require the following variant as well  for $M\neq N$ case  where the centering term is now changed to $\int f~ d\mathsf{MP}_{y}$ where $y=N/M$.

\begin{lem}\label{approxsm}Let $f$ be a continuous and piecewise $C^1$ function on $[0,\infty)$ such that $||f'||_{\infty}\leq \infty$ and $M=N+o(N)<1.1 N.$ Then there exists constants $C_{3}$ and $C_{4}$ depending only on the Lipschitz constant of $f$ such that
$$\left |  \int f ~d\mathsf{ESM} - \int f~ d\mathsf{MP}_y \right |=\frac{C_3g(N)+C_4}{N},$$
\end{lem}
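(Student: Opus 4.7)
The plan is to follow the same two-step decomposition as in Proposition \ref{l:diffcontrol}, but with the ingredients upgraded to handle the rectangular case $M = N + o(N)$.

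First I would establish a rectangular analogue of the Kolmogorov--Smirnov bound in Theorem \ref{t:gotzeks}. The results of G\"otze--Tikhomirov (and related work of Bai) give a bound of the shape $d_{\mathrm{KS}}(\mathsf{ESM}, \mathsf{MP}_y) \leq g(N)/N$ for some slowly growing function $g(N)$ that absorbs the mild dependence of the Marchenko--Pastur density on $y = N/M$; note that since $M < 1.1N$, we stay uniformly away from the degenerate regime $y \to 0$, so all constants can be chosen depending only on the Lipschitz constant of $f$. Second, I would record a tail bound analogous to Lemma \ref{tail}: since $(1+\sqrt{y})^2 \leq (1 + \sqrt{1})^2 = 4$ in our range and the top eigenvalue $\lambda_1$ concentrates near that edge, the same argument using Proposition \ref{p:eigen} plus a union bound (or Gaussian concentration) yields $\mathsf{ESM}[x,\infty] \leq e^{-CNx}$ for $x \geq 4.5$, uniformly in $M, N$ satisfying our hypothesis.

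With these two ingredients in hand, I would split $f = f_1 + f_2$ exactly as in the square case, where $f_1$ agrees with $f$ on $[0,5]$ and is constant on $[5,\infty)$, and $f_2$ is supported on $[5,\infty)$. Since $(1+\sqrt{y})^2 \leq 4 < 5$ in our range, $\mathsf{MP}_y$ is supported in $[0,5]$, so $\int f_2 \, d\mathsf{MP}_y = 0$. Integration by parts against the rectangular KS bound then gives
$$\left|\int f_1 \, d\mathsf{ESM} - \int f_1 \, d\mathsf{MP}_y \right| \leq \|f_1'\|_\infty \cdot 5 \cdot \frac{g(N)}{N} = O\!\left(\frac{g(N)}{N}\right),$$
while integration by parts and the tail bound on $\mathsf{ESM}$ yield $\int f_2 \, d\mathsf{ESM} = O(\|f'\|_\infty e^{-CN})$, which is absorbed into the $C_4/N$ term. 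Summing the two contributions gives the claimed bound $\tfrac{C_3 g(N) + C_4}{N}$.

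The main obstacle is quoting (or reproving) the rectangular KS estimate with the correct quantitative dependence on $y$: the density $d\mathsf{MP}_y$ has a square-root singularity at the lower edge $(1-\sqrt{y})^2$ which degenerates as $y \to 1$, and some care is needed to ensure the resulting rate in $N$ does not blow up when $M - N = o(N)$. Once this rigidity/concentration input is pinned down (and it is available from the literature under our hypothesis $M \leq 1.1 N$), the rest is a routine repetition of the argument already given for the square case, so I would not expect any further conceptual difficulty.
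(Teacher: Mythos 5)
There is a genuine gap at the very first step of your plan. You propose to import a rectangular analogue of Theorem \ref{t:gotzeks}, i.e.\ a Kolmogorov--Smirnov bound $d_{\rm KS}(\mathsf{ESM},\mathsf{MP}_y)\le g(N)/N$, and then rerun the decomposition of Proposition \ref{l:diffcontrol}. But that input is precisely what is \emph{not} available off the shelf in the regime of the lemma: the paper quotes the G\"otze--Tikhomirov rate only for $M=N$, and the hypothesis $M=N+o(N)$ forces $y=N/M\to 1$, which is the delicate near-square (emergent hard edge) regime where the density of $\mathsf{MP}_y$ develops the $1/\sqrt{x}$ singularity and uniformity of such rates in $y$ is exactly the issue. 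Your remark that ``we stay uniformly away from the degenerate regime $y\to 0$'' points at the wrong direction of degeneracy; the condition $M\le 1.1N$ does nothing to keep you away from $y\to 1$, it puts you there. So the ``main obstacle'' you flag is not a technicality to be quoted from the literature but the substantive content of the lemma, and as written your argument rests on an unproved (and, in this uniformity, unclear) external estimate. The remaining steps (the split $f=f_1+f_2$, the tail bound for $\mathsf{ESM}$ on $[5,\infty)$, integration by parts) are indeed routine adaptations of the square case, but they only kick in once that first input is secured.

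For contrast, the paper's proof avoids any KS-type comparison altogether. It plays two high-probability statements about the \emph{same} random eigenvalues against each other: Theorem \ref{t:conc} gives that $\sum_i f(\lambda_i)$ is within $O(1)$ of $N\int f\,d\mathsf{ESM}$ with probability at least $1/2$, while Lemma \ref{l:approx} (which is where the rigidity input Theorem \ref{t:rigidity} enters, and which is the source of the factor $g(N)$) gives that on the rigidity event $\sum_i f(\lambda_i)=N\int f\,d\mathsf{MP}_y+O(g(N))$. Since both events hold simultaneously with positive probability, the two deterministic quantities $N\int f\,d\mathsf{ESM}$ and $N\int f\,d\mathsf{MP}_y$ must differ by $O(g(N))$, which is the claim. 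If you want to salvage your route, you would have to either prove the rectangular KS bound uniformly for $M-N=o(N)$ yourself, or replace it by a rigidity/linear-statistics input of the type the paper actually uses.
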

where throughout the sequel for brevity we will denote the term $(\log N)^{c\log \log N}$  appearing in Theorem \ref{t:rigidity} by $g(N).$
To prove the above, we rely on the following rigidity result about the locations of the eigenvalues to their classical locations.  
For $j=1,2,\ldots, N$ let $\gamma_j=\gamma_{j,M,N}$ denote the classical location of the eigenvalues of $\frac{1}{M}XX^*$, i.e., $\gamma_{j,M,N}$ are the solutions of the equations 

$$ \int_{(1-\sqrt{y})^2}^{\gamma_{j,M,N}} d\mathsf{MP}_{y}(x)= 1-\frac{j}{N}$$
where $y=\frac{M}{N}$. The following theorem which is as extension of Lemma 5.1 of \cite{BYY14}  for the case of non-square Wishart matrices and has the same proof as the latter was communicated to us by Paul Bourgade. 
This is a rigidity result which gives comparison between the classical locations $\gamma_j$ and $\lambda_j$. 

\begin{theorem}\cite[Lemma 5.1]{BYY14} 
\label{t:rigidity}
Let $M-N=o(N)$. For $c>0$, let  $\ce_{c}$ denote the event that 
$$\left\{\exists j\in [(\log N)^{c\log \log N}, N-(\log N)^{c\log \log N}] \text{ such that } |\lambda_j -\gamma_j|\ge  \frac{c(\log N)^{c\log \log N}}{\min (j,N+1-j)^{\frac{1}{3}}N^{\frac{2}{3}}}\right\}.$$
There exists $c>0$ such that for all sufficiently large $N$
$$\P(\ce_{c})\leq  e^{-(\log N)^{c\log \log N}}.$$
\end{theorem}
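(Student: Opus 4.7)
The statement is explicitly billed as the natural extension of \cite[Lemma 5.1]{BYY14} from the square case $M=N$ to the near-square rectangular case $M-N=o(N)$, so my plan is not to give an independent proof but to adapt the Bourgade--Yang--Yau strategy and verify that every step goes through uniformly in the aspect ratio $y=N/M\in[1-o(1),1]$. The overall architecture has three layers: (i) a local Marchenko--Pastur law down to scale $\eta \gtrsim N^{-1+o(1)}$ for the Stieltjes transform of the empirical spectral measure; (ii) a Helffer--Sj\"ostrand argument converting local-law bounds into rigidity of individual eigenvalues $\lambda_j$ around their classical locations $\gamma_j$; (iii) high-probability control from moment estimates, which yields the stretched sub-exponential $\exp(-(\log N)^{c\log\log N})$ decay.

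For step (i) I would set $m_N(z)=\frac{1}{N}\sum_i (\lambda_i-z)^{-1}$ and $m_y(z)$ the Stieltjes transform of $\mathsf{MP}_y$, and establish, on the usual spectral domain $z=E+i\eta$ with $E$ in a neighbourhood of the spectrum and $\eta\in[N^{-1+\epsilon},1]$, estimates of the form $|m_N(z)-m_y(z)|\lesssim (\log N)^{c\log\log N}/(N\eta)$ in the bulk, with the expected square-root weakening $(\log N)^{c\log\log N}/(N\sqrt{\kappa+\eta}\,\eta)^{1/2}$ near the edges, where $\kappa$ is the distance of $E$ to the nearest edge of $\mathsf{MP}_y$. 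For Gaussian Wishart this is handled via the Schur-complement self-consistent equation for the resolvent $(W-z)^{-1}$ plus Gaussian concentration for quadratic forms, exactly as in \cite{BYY14}; the $y$-dependence enters only through the explicit MP self-consistent equation $zy m_y(z)^2 + (z-1+y)m_y(z) + 1 = 0$ whose coefficients depend continuously on $y$.

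Step (ii) then upgrades the local law to eigenvalue rigidity by the standard trick of writing, for a smooth cutoff $\chi$ of an interval $[\gamma_{j+k},\gamma_{j-k}]$, the eigenvalue count as a contour integral of $\chi$ against $\mathrm{Im}\, m_N$ and comparing it with the analogous integral against $\mathrm{Im}\, m_y$; the classical location $\gamma_j$ is defined precisely so that this comparison gives $|\lambda_j-\gamma_j|\leq c(\log N)^{c\log\log N}/(\min(j,N+1-j)^{1/3} N^{2/3})$. The $\min(j,N+1-j)^{1/3}$ factor is the characteristic BYY edge/bulk interpolation, arising from the square-root density of $\mathsf{MP}_y$ at the two spectral edges $(1\pm\sqrt{y})^2$.

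The main obstacle, and the reason one restricts $j$ to $[(\log N)^{c\log\log N},N-(\log N)^{c\log\log N}]$, is that in the regime $M-N=o(N)$ the lower edge $(1-\sqrt{y})^2$ of $\mathsf{MP}_y$ is itself tending to $0$ at an $N$-dependent rate: the would-be soft edge is sliding into a hard edge. Uniform estimates on the Stieltjes transform near a moving edge require care that the implicit constants in the Schur-complement analysis depend only on an upper bound for $|y-1|$, rather than on a fixed gap between $y$ and $1$. This is the content of Paul Bourgade's remark that the BYY14 proof extends: the log-Sobolev and DBM inputs underpinning BYY14 are continuous in $y$, so as long as $y\to 1^-$ the same stretched-exponential estimate persists, at the cost of excluding a sub-polynomial number of extreme eigenvalues on each end where the edge-scale analysis would have to be replaced by a refined soft/hard-edge Airy/Bessel comparison; since the present paper does not need rigidity at those extreme indices, step (iii) and hence the claimed bound follow from the same moment-method argument as in \cite{BYY14}.
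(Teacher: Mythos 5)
Your proposal matches the paper's treatment: the paper gives no proof of this statement at all, citing it as Lemma 5.1 of \cite{BYY14} and recording Paul Bourgade's communication that the same proof extends verbatim to the case $M-N=o(N)$, which is exactly the local-law-plus-rigidity pipeline (self-consistent equation for the Stieltjes transform, Helffer--Sj\"ostrand/eigenvalue-counting comparison with $\mathsf{MP}_y$, stretched-exponential probability bounds, uniformity in $y$ near $1$ with the extreme indices excluded because the lower soft edge degenerates toward a hard edge) that you outline. Aside from the minor mis-attribution of Dyson Brownian motion and log-Sobolev inequalities as inputs to the rigidity estimate (they are not needed for this step), your sketch is the same approach the paper relies on.
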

Theorem \ref{t:rigidity} can be used to bound the fluctuation of linear eigenvalue statistic for general Wishart ensembles, as the following lemma demonstrates.  \begin{lemma}
\label{l:approx}
 Let $f:[0,\infty)\to \R$ be a  Lipschitz function. Let $4+\kappa > \lambda_1 \geq \cdots \geq \lambda_{N}>0$ be such that $\underline{\lambda}$ satisfies the event $\ce^{c}_{c}$ of Theorem \ref{t:rigidity} as well as $\lambda_1<5$. Then we have 
\begin{equation}
\label{e:o2}
\sum_{i=1}^{N} f(\lambda_i)- N\int f~ d\mathsf{MP}_{y}= O(g(N))
\end{equation}
where the constant in $O(\cdot)$ depends on the Lipschitz constant of $f$.
\end{lemma}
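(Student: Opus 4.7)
The plan is to approximate the eigenvalue sum $\sum_i f(\lambda_i)$ by the corresponding sum over the classical locations $\gamma_j$ using the rigidity bound supplied by Theorem~\ref{t:rigidity}, and then to identify the classical-location sum with $N\int f\,d\mathsf{MP}_y$ up to $O(1)$ error via the defining quantile property of the $\gamma_j$.

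First, I would split $\{1,\dots,N\}$ into a bulk part $B=\{j:g(N)\leq j\leq N-g(N)\}$ and an edge part $E$, noting $|E|\leq 2g(N)$. On the event $\ce_c^c$, Theorem~\ref{t:rigidity} gives
$$|\lambda_j-\gamma_j|\leq \frac{c\,g(N)}{\min(j,N+1-j)^{1/3}\,N^{2/3}}\qquad\text{for every }j\in B.$$
Since $f$ is $L$-Lipschitz, applying this together with the elementary estimate $\sum_{j=1}^N \min(j,N+1-j)^{-1/3}=\Theta(N^{2/3})$ yields $\sum_{j\in B}|f(\lambda_j)-f(\gamma_j)|=O(g(N))$. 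For the edge indices $j\in E$, I would use the crude observation that both $\lambda_j\in[0,5]$ (by hypothesis) and $\gamma_j\in[(1-\sqrt{y})^2,(1+\sqrt{y})^2]\subseteq[0,4]$ lie in a fixed bounded interval, so $|f(\lambda_j)-f(\gamma_j)|\leq 5L$ and the total edge contribution is $O(L\,g(N))$. Combining, $\sum_j |f(\lambda_j)-f(\gamma_j)|=O(g(N))$.

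Next, the quantile definition $\int_{(1-\sqrt{y})^2}^{\gamma_j}d\mathsf{MP}_y=1-j/N$ implies that the intervals $[\gamma_{j+1},\gamma_j]$ each carry $\mathsf{MP}_y$-mass exactly $1/N$. The Lipschitz bound therefore gives
$$\Bigl|\int_{\gamma_{j+1}}^{\gamma_j} f\,d\mathsf{MP}_y-\tfrac{1}{N}f(\gamma_j)\Bigr|\leq \frac{L(\gamma_j-\gamma_{j+1})}{N},$$
which telescopes to a total error of $O(L(b-a)/N)=O(1/N)$ across $j=1,\dots,N-1$. Adding back the boundary piece $[\gamma_1,(1+\sqrt{y})^2]$ of mass $1/N$ (whose contribution is $O(\sup_{[0,4]}|f|/N)=O(1/N)$), one obtains $\sum_{j=1}^N f(\gamma_j)=N\int f\,d\mathsf{MP}_y+O(1)$. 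The triangle inequality then yields \eqref{e:o2}.

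The proof is essentially bookkeeping and I do not anticipate a serious obstacle: the rigidity input does all the heavy lifting and everything else reduces to straightforward Riemann-sum approximation on a compact interval. The only point requiring mild care is confirming that the constants implicit in $O(g(N))$ depend only on the Lipschitz constant of $f$ and not on $y$, which is immediate once one notes that the $\mathsf{MP}_y$-support is uniformly contained in $[0,4]$ for $y\in(0,1]$ and that $\sum_{j=1}^{N}\min(j,N+1-j)^{-1/3}$ is a combinatorial quantity independent of $y$.
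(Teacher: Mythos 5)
Your proof is correct and follows essentially the same route as the paper: use the rigidity event to replace each $\lambda_j$ by its classical location $\gamma_j$, with the summability of $\min(j,N+1-j)^{-1/3}$ giving an $O(g(N))$ error, and then compare $\sum_j f(\gamma_j)$ with $N\int f\,d\mathsf{MP}_{y}$ by a Riemann-sum/quantile argument with $O(1)$ error. Your explicit handling of the $O(g(N))$ edge indices not covered by Theorem \ref{t:rigidity} (using $\lambda_1<5$ and $\gamma_j\in[0,4]$) is a detail the paper's write-up glosses over, but the argument is the same in substance.
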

\begin{proof}
It is clear that from definition of $\ce_{c}$ that if $f$ is Lipschitz we have  
$$ \sum_{i=1}^{N} |f(\lambda_i)-f(\gamma_i)|= g(N) O\left(\frac{1}{N}\sum_{i=1}^{N} \frac{1}{\min (\frac{j}{N},\frac{N+1-j}{N})^{\frac{1}{3}}}\right).$$
By observing that $x^{-1/3}$ is integrable at $0$, it follows that the term in the bracket on the RHS of the above display is $O(1)$ and hence it suffices to show that 
$$\sum_{i=1}^{N} f(\gamma_i)- N\int f~ d\mathsf{MP}_{y}=O(1).$$
To this end, observe that 
$$ |\frac{1}{N} f(\gamma_{i})- \int_{\gamma_{i-1}}^{\gamma_i} f(x)~  d\mathsf{MP}_{y}(x)|\leq \int_{\gamma_{i-1}}^{\gamma_i} \int_{x}^{\gamma_i} |f'(z)|~dz~ d\mathsf{MP}_{y}(x) \leq \frac{1}{N}\int_{\gamma_{i-1}}^{\gamma_i} |f'(z)|~dz.$$
Summing over all $i$, and using that $f'$ is bounded gives the desired result.
\end{proof}
Given the above ingredients we can now finish the proof of Lemma \ref{approxsm}.
\begin{proof}[Proof of Lemma \ref{approxsm}]
By  Theorem \ref{t:conc}, there exists a constant $C$ depending on the Lipschitz constant of $f$ such that with probability at least $\frac{1}{2},$
\begin{equation}\label{e:o10}
|\sum_{i=1}^{N} f(\lambda_i)- N\int f ~d\mathsf{ESM}|\le C 
\end{equation}
Thus for large enough $N,$ \eqref{e:o2}, implies that with probability at least $1/4,$  both \eqref{e:o2} and \eqref{e:o10} hold allowing us to conclude that $|\int f ~d\mathsf{ESM}-\int f~ d\mathsf{MP}_{y}|=O(g(N)+C)=O(g(N)).$
\end{proof}

Note that even though one expects logarithmic terms in the rigidity estimates for the location of the eigenvalues, for smooth linear statistics like on the LHS of \eqref{e:o2}, one expects cancellations to occur and the RHS of the same to be $O(1).$ Such results indeed appear in the literature (see for example \cite[Theorem 1.1]{Bai} which was pointed to us by Ofer Zeitouni). However since in this article we only aim to establish the fluctuation exponent to be $1/2$ according to the definitions \eqref{utf} and \eqref{ltf}, and the form of Theorem \ref{t:rigidity} helps us obtain some quantitative estimates, we choose to work with it. In future work we aim to investigate refined questions such as existence of scaling limits where one would need to rely on estimates such as the ones appearing in \cite{Bai}. A more elaborate discussion on such questions is presented in Section \ref{fd}. 

\section{Sharp Asymptotics of the Large Deviation Probabilities}
\label{s:sharp}
In this section we shall prove the key results Theorems \ref{t:ldpm=n} and \ref{t:ldpgen} using the eigenvalue rigidity results for Wishart matrices .
Recall the basic set-up of previous sections, i.e., $\lambda_1$ is the largest eigenvalue of $\frac{1}{M}XX^{*}$ where $X$ is an $N\times M$ complex Gaussian matrix of i.i.d.\ entries. 
Furthermore for our purposes we record the following weaker but uniform over $\delta$ version of the above theorem which is just a consequence of subadditivity. 

\begin{prp}\label{t:diaguniform}
Let $M=N$  and $I(\delta)$ be defined by \eqref{e:i}. Then for all $\delta>0$
$$\log \P(\lambda_1 > 4+\delta)\le -NI(\delta)+O(4+\delta).$$
\end{prp}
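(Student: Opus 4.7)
The plan is to use the superadditivity of exponential LPP along the diagonal to boost the asymptotic rate function from Theorem \ref{t:ldp} into a uniform (in $\delta$ and $N$) upper bound via a Fekete-style argument. This is natural because a quantity of the form $a_N = \log \P(T_N > (4+\delta)N)$ becomes superadditive once one exhibits an independent decomposition of $T_{mN}$ that dominates $m$ independent copies of $T_N$.

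First, I would translate the eigenvalue question into LPP via Proposition \ref{p:eigen} applied with $M=N$: $\lambda_1 \stackrel{d}{=} T_N/N$, so
\[
\P_N(\lambda_1 > 4+\delta) \;=\; \P(T_N > (4+\delta)N).
\]
Next I would establish the key superadditivity $T_{mN} \ge \sum_{i=1}^m T_N^{(i)}$, where the $T_N^{(i)}$ are i.i.d.\ copies of $T_N$. To see this, partition $[1,mN]^2$ into the $m$ diagonal blocks $B_i = [(i-1)N+1,iN]^2$. The block-restricted last-passage times are independent (the blocks use disjoint vertices) and each is distributed as $T_N$. Concatenating the block-geodesics, glued by a single up-then-right step from $(iN,iN)$ to $(iN+1,iN+1)$, produces a legal up/right path from $(1,1)$ to $(mN,mN)$; since exponential weights are non-negative, the glue contributes a non-negative term and the superadditivity follows.

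From the independence of the $T_N^{(i)}$ together with the superadditivity, I would then deduce
\[
\P(T_N > (4+\delta)N)^m \;\le\; \P\!\Bigl(\sum_{i=1}^m T_N^{(i)} > (4+\delta)mN\Bigr) \;\le\; \P(T_{mN} > (4+\delta)mN).
\]
Taking logarithms, dividing by $m$, and sending $m \to \infty$ with $N$ and $\delta$ held fixed,
\[
\log \P(T_N > (4+\delta)N) \;\le\; \liminf_{m\to\infty}\frac{N}{mN}\,\log \P(T_{mN} > (4+\delta)mN) \;=\; -N\,I(\delta),
\]
where the last equality is Theorem \ref{t:ldp}. This yields an even cleaner bound than the statement: the slack $O(4+\delta)$ is never actually needed, but the stated form is harmless since $-NI(\delta) \le -NI(\delta) + O(4+\delta)$.

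There is no genuine obstacle here beyond the bookkeeping for the diagonal-block decomposition; the main conceptual point is that subadditivity/superadditivity converts a \emph{limit} statement (Theorem \ref{t:ldp}, valid for each fixed $\delta$) into a \emph{uniform} non-asymptotic upper bound. The only thing to verify carefully is that gluing consecutive blocks costs nothing in probability — which is immediate from non-negativity of the weights — and that the limit in Theorem \ref{t:ldp} is genuinely applicable at the divergent scale $mN$ for each fixed $\delta>0$, which is automatic.
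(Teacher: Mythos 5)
Your proof is correct, and while it lives in the same sub/super-additivity spirit as the paper's argument, the mechanism is genuinely different and in fact yields a stronger conclusion. The paper works with the modified passage time $T'_{u,v}=T_{u,v}-X_v$, introduced precisely because $T_{\mathbf{1},\mathbf{n}}+T_{\mathbf{n},\mathbf{n+m}}\le T_{\mathbf{1},\mathbf{n+m}}$ fails deterministically (the vertex $\mathbf{n}$ is counted twice); the sequence $a'_n=\log\P(T'_n\ge(4+\delta)n)$ is then exactly superadditive, Fekete's lemma gives $a'_n\le -nI(\delta)$, and transferring back to $T_n$ via $T_n\preceq T'_{n+1}$ forces the shift $\delta\mapsto\delta'=\delta-\frac{4+\delta}{n+1}$ and, through the bound $I'\le 2$, the $O(4+\delta)$ error in the statement. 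You sidestep the double-counting issue entirely by concatenating the geodesics of $m$ pairwise disjoint diagonal blocks, glued through an extra vertex of non-negative weight (the glue from $(iN,iN)$ to $(iN+1,iN+1)$ is two lattice steps through one intermediate vertex, not one step, but this is immaterial), so that for each fixed $N$ you obtain $\P(T_N>(4+\delta)N)^m\le\P(T_{mN}>(4+\delta)mN)$ directly; letting $m\to\infty$ and invoking Theorem \ref{t:ldp} along $n=mN$ gives the clean bound $\log\P_{N}(\lambda_1>4+\delta)\le -NI(\delta)$ with no error term, which of course implies the stated inequality. What the paper's variant buys is exact superadditivity of a single sequence over all indices $n$; what yours buys is the elimination of the correction term and of the need to perturb $\delta$, so for this particular proposition your route is both valid and slightly sharper.
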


\begin{proof} We provide the quick proof here.  We start by observing that  
$a_n:=\log \P(\lambda_1 > 4+\delta)$ as a function of $n$ is almost but unfortunately not quite a sub-additive sequence. Recalling \eqref{input}, we will consider $T_n$ instead. Now the reason for the failure of the above sequence to be sub-additive is 
that the relation $T_{\mathbf{1}, \mathbf{n}}+T_{\mathbf{n}, \mathbf{n+m}}\le T_{\mathbf{1},\mathbf{n+m}}$ is not deterministically true.
Note that this is because the LHS counts the variable $X_{n,n}$ twice.  Thus to make things sub-additive let $T'_{u,v}=T_{u,v}-X_{v},$ i.e. the last entry of the path is omitted.  As before for brevity let $T'_n=T'_{\mathbf{1},\mathbf{n}}.$
Then it is easy to see that $\P(T'_n\ge (4+\delta)n)$ is sub-additive. Furthermore, the rate functions of $T'$ and $T$ are the same, i.e., for any $\delta>0,$
$$
\lim \frac{a_n}{n}=\lim \frac{a'_n}{n}=-I(\delta),
$$
where $a'_n:=\log \P(T'_n > (4+\delta)n).$ But the sequence $a'_n$ is now sub-additive and hence for every $\delta,$ and any $n,$ we have $a'_n\le -nI(\delta).$ However note that we want to bound $a_n$ and not $a'_n.$ We however notice that $T_{n}\preceq T'_{n+1}$ and hence 
\begin{align*}
\P(T_{n}\ge (4+\delta)n)\le \P(T'_{n+1}\ge (4+\delta)n)\le -(n+1)I(\delta') 
\end{align*}
where $(4+\delta')(n+1)=(4+\delta)n.$
Thus $\delta'=\frac{\delta n-4}{n+1}=\delta-\frac{4+\delta}{n+1}.$
Now one can observe from \eqref{e:i} that $I'(\delta)\le 2$ for all $\delta.$
Thus $I(\delta')\ge I(\delta)-O(\frac{4+\delta}{n+1}).$
Hence $a_{n}\le -nI(\delta)-I(\delta)+O(4+\delta)\le -nI(\delta)+O(4+\delta).$
\end{proof}

For the case $M\neq N$, we shall use Theorem \ref{t:rigidity} instead of Theorem \ref{t:gotzeks} and hence get the stated weaker estimate. 
As already mentioned above the proofs of Theorems \ref{t:ldpm=n} and \ref{t:ldpgen} are largely identical, so we shall, for the most part, concentrate on the more general case of Theorem \ref{t:ldpgen}, indicating at the end how the sharper result of Theorem \ref{t:ldpm=n} can be obtained for the case $M=N$.

We need a series of lemmas to prove Theorem \ref{t:ldpgen}. We start with obtaining a sharp growth rate for the partition function. 

\begin{lemma}
\label{l:partition}
Let $M=N+o(N)$ and $Z_{M,N}$ be defined by \eqref{e:partition}. Then we have 
\begin{align*}
\log \frac{Z_{M-1,N-1}}{Z_{M,N}}&= 2N+M-N\log \frac{N}{M}+ O(1),\\
&= N(2+y^{-1})-N\log y+ O(1),
\end{align*}
\end{lemma}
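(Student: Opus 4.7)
The plan is to unfold the explicit formula \eqref{e:partition} for $Z_{M,N}$ and reduce everything to Stirling's approximation. First I would write
\[
\frac{Z_{M-1,N-1}}{Z_{M,N}} \;=\; \frac{M^{NM}}{(M-1)^{(N-1)(M-1)}}\cdot\frac{\prod_{j=0}^{N-2} j!\,(M-N+j)!}{\prod_{j=0}^{N-1} j!\,(M-N+j)!}.
\]
Since $(M-1)-(N-1)=M-N$, the indices $M-N+j$ match in the two products, and almost everything telescopes: the ratio of products equals $1/\bigl[(N-1)!\,(M-1)!\bigr]$. Hence
\[
\log \frac{Z_{M-1,N-1}}{Z_{M,N}} \;=\; NM\log M-(N-1)(M-1)\log(M-1)-\log((N-1)!)-\log((M-1)!).
\]

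The main step is then a careful Stirling expansion of the right-hand side. Using $\log k! = k\log k - k + \tfrac12\log(2\pi k) + O(1/k)$, I would write
\[
-\log((N-1)!) = -N\log N + N + \tfrac12\log N + O(1),\qquad -\log((M-1)!) = -M\log M + M + \tfrac12\log M + O(1),
\]
where I used $\log(1-1/N)=-1/N+O(1/N^2)$ to absorb the shift from $N$ to $N-1$ (and similarly for $M$). For the remaining piece, the Taylor expansion
\[
(M-1)\log(M-1)=M\log M-\log M-1+O(1/M)
\]
gives, after distributing the factor $(N-1)$,
\[
NM\log M-(N-1)(M-1)\log(M-1) = M\log M + N\log M - \log M + N + O(1).
\]
Summing the four contributions collapses the $\pm M\log M$ terms, combines the $\tfrac12\log N+\tfrac12\log M-\log M$ into $\tfrac12\log(N/M)$, and leaves
\[
\log \frac{Z_{M-1,N-1}}{Z_{M,N}} \;=\; 2N + M + N\log(M/N) + \tfrac12\log(N/M) + O(1).
\]

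Finally I invoke the hypothesis $M=N+o(N)$, which gives $N/M=1+o(1)$ and hence $\log(N/M)=o(1)=O(1)$. This absorbs the leftover half-logarithm into the error and yields
\[
\log \frac{Z_{M-1,N-1}}{Z_{M,N}} \;=\; 2N+M-N\log(N/M)+O(1)\;=\;N(2+y^{-1})-N\log y+O(1),
\]
as claimed. The only real obstacle is bookkeeping: the three terms $NM\log M$, $(N-1)(M-1)\log(M-1)$, and the two log-factorials each contribute pieces of size $\Theta(N\log N)$, and one must track them to precision $O(1)$ so the enormous cancellations go through; the hypothesis $M/N\to1$ is used precisely once, to turn the residual $\tfrac12\log(N/M)$ into an $O(1)$ term. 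As a sanity check, setting $M=N$ recovers $3N+O(1)$, matching \eqref{e:partitionf}.
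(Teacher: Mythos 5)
Your proof is correct and follows essentially the same route as the paper: telescope the explicit formula \eqref{e:partition} down to $M^{NM}/\bigl[(M-1)^{(N-1)(M-1)}(N-1)!\,(M-1)!\bigr]$ and expand via Stirling, with the residual $\tfrac12\log(N/M)$ absorbed into the $O(1)$ term since $M=N+o(N)$. The only cosmetic difference is that the paper expands the reciprocal ratio $\log(Z_{M,N}/Z_{M-1,N-1})$, and your bookkeeping (including the sanity check against \eqref{e:partitionf}) matches its conclusion.
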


\begin{proof} Recalling \eqref{e:partition} we observe that $$\frac{Z_{M,N}}{Z_{M-1,N-1}}=\frac{(N-1)!(M-1)!(M-1)^{(N-1)(M-1)}}{M^{NM}}$$ 
Simplifying using Stirling's approximation:
\begin{align*}
\log\frac{Z_{M,N}}{Z_{M-1,N-1}}=&O(1)+(-N-M)+(N-1/2)\log(N-1)+
(M-1/2)\log(M-1)\\
&+(N-1)(M-1)\log(M-1)-NM\log(M),\\
&=-2N-M+ (N\log N+M\log M -(N+M)\log M)+O(1).
\end{align*}
\end{proof}

The next lemma shows that conditional on $\lambda_1>(4+\delta)$, it is exponentially unlikely that 
$\lambda_2 > (4+\delta/2)$. We will not use it explicitly but since our methods yield a soft proof of this and we could not locate such a statement in the literature we record it for future use. For brevity we state the result only in the case $M=N$ (the proof for the general case is similar).
\begin{lemma}
\label{l:toptwo}
There exists a constant $h:=h(\delta)>0$ such that we have  
$$-\log \P(\lambda_1 > (4+\delta), \lambda_2 > (4+\delta/2)) > N(I(\delta)+h(\delta)).$$
\end{lemma}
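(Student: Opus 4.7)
The plan is to combine the negative association of the LUE viewed as a determinantal point process with the uniform single-eigenvalue upper tail bound from Proposition~\ref{t:diaguniform}. Writing $t_1 := 4+\delta$ and $t_2 := 4+\delta/2$, I first observe that on the event $\{\lambda_1 > t_1,\lambda_2 > t_2\}$ the top two eigenvalues themselves witness two distinct indices $i\ne j$ with $\lambda_i > t_1$ and $\lambda_j > t_2$, so by a method-of-moments bound
$$\P(\lambda_1 > t_1,\lambda_2 > t_2) \;\le\; \E\Big[\sum_{i\ne j}\ind\{\lambda_i>t_1\}\ind\{\lambda_j>t_2\}\Big] \;=\; \int_{t_1}^{\infty}\!\int_{t_2}^{\infty}\rho_2(x,y)\,dx\,dy,$$
where $\rho_2$ is the $2$-point correlation function of the LUE. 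Since the eigenvalue ensemble is determinantal with Hermitian kernel $K$, one has $\rho_2(x,y) = K(x,x)K(y,y)-|K(x,y)|^2 \le \rho_1(x)\rho_1(y)$, and hence
$$\P(\lambda_1 > t_1,\lambda_2 > t_2) \;\le\; \E[N_{t_1}]\cdot\E[N_{t_2}], \qquad N_t := \#\{i:\lambda_i > t\}.$$

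A trivial union bound gives $\E[N_t]\le N\,\P(\lambda_1>t)$, and Proposition~\ref{t:diaguniform} applied with $s=\delta$ and $s=\delta/2$ yields $\P(\lambda_1>4+s)\le e^{-NI(s)+O(s)}$. Plugging these in,
$$\P(\lambda_1 > t_1,\lambda_2 > t_2) \;\le\; N^2\,e^{-N[I(\delta)+I(\delta/2)]+O(\delta)}.$$

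To close the argument I would verify that $I(\delta/2)>0$ strictly. Differentiating \eqref{e:i} under the integral sign one finds $I'(0)=1-2\int_0^4(4-x)^{-1}\,d\mathsf{MP}(x)=0$ (using the elementary identity $\int_0^4(4-x)^{-1}\,d\mathsf{MP}(x)=1/2$), while $I''(\delta)=2\int_0^4(4+\delta-x)^{-2}\,d\mathsf{MP}(x)>0$ for every $\delta\ge 0$. Hence $I$ is strictly convex with $I(0)=I'(0)=0$, so $I>0$ on $(0,\infty)$, and in particular $I(\delta/2)>0$. Setting $h(\delta):=\tfrac12 I(\delta/2)>0$ then absorbs the polynomial $N^2$ factor and the $O(\delta)$ constant for all sufficiently large $N$, yielding $-\log\P(\lambda_1>t_1,\lambda_2>t_2)>N(I(\delta)+h(\delta))$ as required.

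The crux is imposing the stricter threshold $t_1$ on one of the two eigenvalues so that the resulting exponential rate becomes $I(\delta)+I(\delta/2)$ rather than $2I(\delta/2)$: the latter, which one would get from the simpler bound $\P(\lambda_1>t_1,\lambda_2>t_2)\le\P(\lambda_2>t_2)$ via $\P(N_{t_2}\ge 2)\le\tfrac12(\E N_{t_2})^2$, is insufficient because convexity of $I$ with $I(0)=0$ gives $2I(\delta/2)\le I(\delta)$. There is no real technical obstacle beyond making this observation.
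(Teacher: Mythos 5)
Your proof is correct, but it takes a genuinely different route from the paper. The paper proves this lemma with the same Coulomb gas machinery as Theorem \ref{t:ldpm=n}: it decomposes the joint density into the top two eigenvalues and the remaining $N-2$, introduces a two-variable analogue $H_{1,2}(\lambda_1,\lambda_2)$ of $H_1$, bounds it via two truncated test functions $g_{\lambda_1},g_{\lambda_2}$ together with the concentration of linear statistics and the partition function asymptotics of Lemma \ref{l:partition}, and arrives at $\P(\lambda_1\ge 4+\delta,\ \lambda_2\ge 4+\delta/2)\le e^{-N(I(\delta)+I(\delta/2))+O(1)}$. You instead exploit the determinantal structure directly: the inequality $\rho_2(x,y)=K(x,x)K(y,y)-|K(x,y)|^2\le\rho_1(x)\rho_1(y)$ decouples the two-point event into a product of one-point intensities, and the crude bound $\E[N_t]\le N\P(\lambda_1>t)$ combined with the subadditivity-based uniform estimate of Proposition \ref{t:diaguniform} gives $N^2e^{-N[I(\delta)+I(\delta/2)]+O(1)}$, after which strict convexity of $I$ with $I(0)=I'(0)=0$ (your computation $\int_0^4(4-x)^{-1}d\mathsf{MP}=1/2$ is right, and the dominated-convergence justification for differentiating at $\delta=0$ is routine) yields $I(\delta/2)>0$ and closes the argument. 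Your key observation — that one must keep the asymmetric thresholds $4+\delta$ and $4+\delta/2$ on the two points, since convexity gives $2I(\delta/2)\le I(\delta)$ and so the symmetric second-moment bound on $N_{t_2}$ would not suffice — is exactly the point that makes this shortcut work. What each approach buys: yours is more elementary, bypassing the rigidity/concentration inputs entirely at the cost of a harmless $N^2$ prefactor (so the conclusion holds for all sufficiently large $N$, which is the intended reading); the paper's computation is heavier but produces the cleaner $O(1)$-error estimate and reuses the exact framework needed for Theorems \ref{t:ldpm=n} and \ref{t:ldpgen}, which is presumably why it is presented that way.
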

The proof uses the same Coulomb gas methods as in the proof of Theorem \ref{t:ldpm=n} and hence the arguments for the lemma will be provided after the latter is presented.

We now proceed towards the proof of Theorems \ref{t:ldpm=n} and \ref{t:ldpgen}.
The next lemma shall make precise the heuristic approximation described in Section \ref{s:heuristic}. Recall that  $\underline{\lambda}^{(1)}= (\lambda_2, \ldots , \lambda_{N})$ and $V(\lambda_1; \underline{\lambda}^{(1)}):=\prod_{j\neq 1}(\lambda_1-\lambda_j)$ and  the density $f_{M,N}$ from \eqref{e:density}.
Fix $\delta>0$ and $L>(4+\delta)$. For $\lambda_1\in [4+\delta,L)$, let $g_{\lambda_1}:[0,\infty)\to \R$ be a piece-wise smooth function such that 
$$g_{\lambda_1}(x)=2\log (\lambda_1-x)-x$$
on $[0,4+\delta/2]$ and $g_{\lambda_1}(x)\ge 2\log (\lambda_1-x)-x$ for each $x<\lambda_1$ and $g(x)$ is uniformly  bounded in absolute value on the entire interval $[0,\infty)$ by a constant which is just a function of $\delta$ and $L$. Furthermore, it is easy to choose $g=g_{\lambda_1}$ such that,  
\begin{equation}\label{lipconst1}
\sup_{x\in[0,\infty)}|g'(x)|<C, \text{ and } \int_{0}^5 |g'(x)|dx <C,
\end{equation}
 for some some $C=C(\delta)$ independent of the choice of $\lambda_1\in [4+\delta,\infty).$  
For concreteness we take  
\begin{equation}\label{choice}
g(x)=\left\{\begin{array}{cc}
2\log (\lambda_1-x)-x &\text{ for } x\le 4+\frac{\delta}{2}\\
g(4+\frac{\delta}{2}),& \text{otherwise}.
\end{array}\right.
\end{equation}
 Thus \eqref{lipconst1} holds since $|g'(x)|<1+\frac{2}{\delta}$ for all $x.$   Moreover, because $2\log (\lambda_1-x)-x$ is decreasing for $x<\lambda_1,$ we have $g(x)\ge 2\log (\lambda_1-x)-x$ for all $x\le \lambda_1.$ Thus it follows that for $\lambda_1\in (4+\delta,L)$ we have  
\begin{equation}
\label{e:step1}
\int_{\underline{\lambda}^{(1)}: \lambda_2 \leq \lambda_1} V(\lambda_1; \underline{\lambda}^{(1)})^2 e^{-\sum_{i=2}^N\lambda_i}f_{M-1,N-1} d\underline{\lambda}^{(1)} \leq \int_{\underline{\lambda}^{(1)}} e^{\sum_{i=2}^N g_{\lambda_1}(\lambda_i)}f_{M-1,N-1} d\underline{\lambda}^{(1)} .
\end{equation}
Clearly, we also have 
\begin{equation}
\label{e:step2}
\int_{\underline{\lambda}^{(1)}: \lambda_2 \leq \lambda_1} V(\lambda_1; \underline{\lambda}^{(1)})^2 e^{-\sum_{i=2}^N\lambda_i}f_{M-1,N-1} d\underline{\lambda}^{(1)} \geq \int_{\underline{\lambda}^{(1)}: \lambda_2 \leq 4+\delta/2} e^{\sum_{i=2}^N g_{\lambda_1}(\lambda_i)}f_{M-1,N-1} d\underline{\lambda}^{(1)} 
\end{equation}

The next lemma gives a bound on the RHS of \eqref{e:step1}.
\begin{lemma}
\label{l:intapp}
Let $\delta>0$ and $L>(4+\delta)$. Then uniformly in $\lambda_1\in [(4+\delta),L]$ we have 
\begin{align*}\log \left(  \int_{\underline{\lambda}^{(1)}} e^{\sum_{i=2}^N g_{\lambda_1}(\lambda_i)}f_{M-1,N-1} d\underline{\lambda}^{(1)}\right) &=2N \int \log (\lambda_1-x)d\mathsf{MP}_{y}(x)-N+O(\log N^{c\log \log N}),\\
\log\left(\int_{\underline{\lambda}^{(1)}: \lambda_2 \leq 4+\delta/2} e^{\sum_{i=2}^N g_{\lambda_1}(\lambda_i)}f_{M-1,N-1} d\underline{\lambda}^{(1)} \right)&=2N \int \log (\lambda_1-x)d\mathsf{MP}_{y}(x)-N+O(\log N^{c\log \log N}).
\end{align*}
where $\log N^{c\log \log N}$ is the term appearing in Theorem \ref{t:rigidity}.
Further, if $M=N$, then the error terms in the above display can be replaced by $O(1)$.
\end{lemma}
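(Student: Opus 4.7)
The key algebraic observation is that on $\mathrm{supp}(\mathsf{MP}_y)\subseteq[0,4]$ (using $y\le 1$), $g_{\lambda_1}$ coincides with $2\log(\lambda_1-x)-x$, so by \eqref{e:mpmean} one has
$$N\int g_{\lambda_1}\,d\mathsf{MP}_y \;=\; 2N\int \log(\lambda_1-x)\,d\mathsf{MP}_y(x) - N,$$
which already matches the claimed right-hand side. Writing $S:=\sum_{i=2}^{N}g_{\lambda_1}(\lambda_i)$ where $(\lambda_i)$ has density $f_{M-1,N-1}$, the task therefore reduces to establishing $\log \E[e^{S}] = N\int g_{\lambda_1}\,d\mathsf{MP}_y + O(g(N))$ for the unrestricted integral (and $+O(1)$ when $M=N$), together with the analogous lower bound for the restricted integral.

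The plan for the unrestricted integral is to combine sub-Gaussian concentration of $S$ around its mean with a separate estimate of that mean. By \eqref{choice} and \eqref{lipconst1}, $g_{\lambda_1}$ has Lipschitz constant at most $1+2/\delta$ uniformly in $\lambda_1\in[4+\delta,L]$, so Theorem \ref{t:conc} yields $\P(|S-\E S|\ge r)\le e^{-cr^2}$ with $c=c(\delta)>0$. A standard tail-to-MGF conversion then gives $\E[e^{S-\E S}] = O(1)$, and combined with Jensen's inequality this produces
$$\E S \;\le\; \log\E[e^{S}] \;\le\; \E S + O(1).$$
To replace $\E S$ by $N\int g_{\lambda_1}\,d\mathsf{MP}_y$, I apply Lemma \ref{approxsm} to the $(M-1)\times(N-1)$ ensemble (with aspect ratio $y'=(N-1)/(M-1)=y+O(1/N)$), obtaining $|\E S - (N-1)\int g_{\lambda_1}\,d\mathsf{MP}_{y'}| = O(g(N))$, after which a Lipschitz-in-$y$ comparison between $\mathsf{MP}_y$ and $\mathsf{MP}_{y'}$ absorbs the parameter shift into an additive $O(1)$. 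In the case $M=N$, Proposition \ref{l:diffcontrol} (powered by the stronger Theorem \ref{t:gotzeks}) replaces Lemma \ref{approxsm} and returns the sharper $O(1)$ precision, since both $\int_0^5 |g_{\lambda_1}'|$ and $\|g_{\lambda_1}'\|_\infty$ are $O(1)$ uniformly in $\lambda_1$.

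For the restricted integral the task is to show that the missing region $\{\lambda_2>4+\delta/2\}$ contributes only an exponentially smaller amount. By Cauchy--Schwarz together with the second-moment bound $\E[e^{2S}] = e^{2\E S + O(1)}$ (obtained by repeating the sub-Gaussian argument for $2 g_{\lambda_1}$),
$$\E\bigl[e^{S}\,\mathbf{1}_{\lambda_2>4+\delta/2}\bigr] \;\le\; \bigl(\E[e^{2S}]\,\P(\lambda_2>4+\delta/2)\bigr)^{1/2} \;\le\; e^{\E S + O(1) - c'N/2},$$
where $\P(\lambda_2>4+\delta/2)\le e^{-c'N}$ follows from Proposition \ref{t:diaguniform} in the square case and by stochastic domination (via Theorem \ref{t:dom}) or a direct sub-additivity argument in the rectangular case. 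Since this is exponentially smaller than $e^{\E S}$, subtracting it from the unrestricted integral preserves the leading logarithmic asymptotics, yielding the second displayed bound.

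The main obstacle is the tension between the $O(1)$-scale sub-Gaussian fluctuations of $S$ and the exponential weight $e^S$, which could in principle blow up on atypical large values of $S$; the naive bound $e^S\le e^{(N-1)\|g_{\lambda_1}\|_\infty}$ is too crude and the sub-Gaussian MGF estimate is precisely the tool needed to control the tail contribution. A secondary subtlety is the precision gap between the square and rectangular cases: it is dictated entirely by the available input on the expected spectral measure, namely the Kolmogorov--Smirnov rate from Theorem \ref{t:gotzeks} versus the bulk rigidity of Theorem \ref{t:rigidity}.
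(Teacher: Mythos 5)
Your proposal is correct and follows essentially the same route as the paper: sub-Gaussian concentration of the Lipschitz linear statistic (Theorem \ref{t:conc}) converted into an MGF bound by tail integration, the mean replaced via Lemma \ref{approxsm} (resp.\ Theorem \ref{t:gotzeks} when $M=N$), and the identity $\int g_{\lambda_1}\,d\mathsf{MP}_y=2\int\log(\lambda_1-x)\,d\mathsf{MP}_y-1$ from \eqref{e:mpmean}. The only (harmless) deviation is in the restricted integral, where you use Jensen plus Cauchy--Schwarz together with $\P(\lambda_2>4+\delta/2)\le e^{-c'N}$ to discard the complementary region, whereas the paper lower-bounds the restricted integral directly on the event $\{|S-\E S|\le A,\ \lambda_2<4+\delta/2\}$ of probability at least $1/3$; both arguments are valid.
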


\begin{proof}
Let $\E_{\mathsf{ESM}}:=\E_{\mathsf{ESM},{M-1,N-1}}$ denote the expectation with respect to the empirical spectral measure of an $(N-1)\times (M-1)$ Wishart matrix followed by an average over the matrix i.e., if $\tilde \lambda_1\ge \tilde \lambda_2\ge \ldots \tilde \lambda_{N-1}$ be the eigenvalues of such a matrix, then for any $f$ $$\E_{\mathsf{ESM}}(f):=\frac{1}{N-2}\E(\sum_{i=1}^{N-1}f(\tilde \lambda_i))= \E(\tr(f)).$$ 
 By Theorem \ref{t:conc} it follows that uniformly over all $\lambda_1 \in [4+\delta, L]$  under the measure $f_{M-1,N-1} d\underline{\lambda}^{(1)},$
$$\P(e^{\sum_{i=2}^N g_{\lambda_1}(\lambda_i)-(N-1)\E_{\mathsf{ESM}}( g_{\lambda_1})}\ge e^{(N-1)\eta})\le  e^{-C\eta^2 N^2}$$ for some $C=C(\delta).$ Note that here we use crucially the uniform bound on the derivative stated in \eqref{lipconst1} required by the hypothesis of Theorem \ref{t:conc}.
Rephrasing we get that for any $y>1,$
$$\P(e^{\sum_{i=2}^N g_{\lambda_1}(\lambda_i)-(N-1)\E_{\mathsf{ESM}} (g_{\lambda_1})}\ge y)\le  e^{-C\log^2 (y)}.$$  
Integrating over $y>1$ we get that, 
$$\int_{\underline{\lambda}^{(1)}} e^{\sum_{i=2}^N g_{\lambda_1}(\lambda_i)-(N-1)\E_{\mathsf{ESM}} (g_{\lambda_1})}f_{M-1,N-1} d\underline{\lambda}^{(1)}=O(1).$$
Note that for the above choice of $g$ in \eqref{choice}, for any value of $\lambda_1>4+\delta,$ 
\begin{equation}\label{unibound543}
\int_{0}^5|g'(x)|\le C(\delta_0).
\end{equation}
Thus by Lemma \ref{approxsm}, and the uniformity assumptions on $g,$ (\eqref{lipconst1})  that, 
\begin{equation}\label{approx12}
|\E_{\mathsf{ESM}}g_{\lambda_1}-\int g_{\lambda_1}d\mathsf{MP}_{y}|= O\left(\frac{\log N^{c\log \log N}}{N}\right).
\end{equation}

Furthermore, for the case $M=N$, the sharper result \eqref{e:o1} can be used to replace the RHS in \eqref{approx12} by $O(\frac{1}{N}).$
Now observe also that by \eqref{e:mpmean}, and the fact that the support of $\mathsf{MP}_{y}$ is contained in $[0,4]$ it follows that 

\begin{equation}\label{approx13}\int g_{\lambda_1}d\mathsf{MP}_{y}(x)=2 \int \log (\lambda_1-x)d\mathsf{MP}_{y}(x)-1.
\end{equation}
Thus using \eqref{e:step1} we obtain that 
\begin{align}\label{ub1}
\int_{\underline{\lambda}^{(1)}: \lambda_2 \leq \lambda_1} V(\lambda_1; \underline{\lambda}^{(1)})^2 e^{-\sum_{i=2}^N\lambda_i}f_{M-1,N-1} d\underline{\lambda}^{(1)} &\leq \int_{\underline{\lambda}^{(1)}} e^{\sum_{i=2}^N g_{\lambda_1}(\lambda_i)}f_{M-1,N-1} d\underline{\lambda}^{(1)} \\
\nonumber
\leq e^{-  N\left[2\int \log (\lambda_1-x)d\mathsf{MP}_{y}(x)-1\right]+O\left({\log N^{c\log \log N}}\right)}.
\end{align}
Similarly for the lower bound using \eqref{e:step2} we get, 
\begin{align}
\label{lb1}
\int_{\underline{\lambda}^{(1)}: \lambda_2 \leq \lambda_1} V(\lambda_1; \underline{\lambda}^{(1)})^2 e^{-\sum_{i=2}^N\lambda_i}f_{M-1,N-1} d\underline{\lambda}^{(1)} &\geq \int_{\underline{\lambda}^{(1)}: \lambda_2 \leq 4+\delta/2} e^{\sum_{i=2}^N g_{\lambda_1}(\lambda_i)}f_{M-1,N-1} d\underline{\lambda}^{(1)}\\
\nonumber
&\geq e^{-  N\left[2\int \log (\lambda_1-x)d\mathsf{MP}_{y}(x)-1\right]+O\left({\log N^{c\log \log N}}\right)},
\end{align}
where for the last inequality we first observe that Theorem \ref{t:conc} implies that
\begin{align*}
\P_{M-1,N-1}\left[|\sum_{i=2}^{N}g_{\lambda_1}(\lambda_i)-(N-1)\E_{\mathsf{ESM}}(g_{\lambda_1})|<A, \mathbf{1}(\lambda_2< (4+\frac{\delta}{2}))\right]&\ge \P\left[|\sum_{i=2}^{N}g_{\lambda_1}(\lambda_i)-(N-1)\E_{\mathsf{ESM}}(g_{\lambda_1})|<A\right],\\
& -\P\left[\mathbf{1}(\lambda_2< (4+\frac{\delta}{2}))\right],\\
&\ge 1/2-e^{-\Theta(n)},
\end{align*}
where the last inequality follows by taking $A=O(1)$ large enough depending on $C$ appearing in Theorem \ref{t:conc}.
The above bound implies that 
\begin{align}\label{lb123}
\int_{\underline{\lambda}^{(1)}: \lambda_2 \leq 4+\delta/2}& e^{\sum_{i=2}^N g_{\lambda_1}(\lambda_i)}f_{M-1,N-1} d\underline{\lambda}^{(1)}\\
\nonumber
&\ge [e^{-(N-1)\E_{\mathsf{ESM}}(g_{\lambda_1})-A}]\P\left[|\sum_{i=2}^{N}g_{\lambda_1}(\lambda_i)-(N-1)\E_{\mathsf{ESM}}(g_{\lambda_1})|<A, \mathbf{1}(\lambda_2< (4+\frac{\delta}{2}))\right],\\
\nonumber
&\ge \frac{e^{-(N-1)\E_{\mathsf{ESM}}(g_{\lambda_1})-A}}{3}\ge e^{-  N\left[2\int \log (\lambda_1-x)d\mathsf{MP}_{y}(x)-1\right]+O\left({\log N^{c\log \log N}}\right)},
\end{align}
where the last inequality uses \eqref{approx12} and \eqref{approx13}. 
For the $M=N$ case, as discussed right after \eqref{approx12}, applying Theorem \ref{t:gotzeks} and \eqref{e:ksapp} allows us to replace the  $O\left({\log N^{c\log \log N}}\right)$ term in \eqref{lb123} by $O(1).$ 
\end{proof}

We are now ready to prove Theorem \ref{t:ldpgen}.

\begin{proof}[Proof of Theorem \ref{t:ldpgen}]
Let $H_1(\lambda_1)$  denote the LHS  of \eqref{e:step1} i.e.,
\begin{align}\label{def30}
H_1(\lambda_1)&:=\int_{\underline{\lambda}^{(1)}: \lambda_2 \leq \lambda_1} V(\lambda_1; \underline{\lambda}^{(1)})^2 e^{-\sum_{i=2}^N\lambda_i}f_{M-1,N-1} d\underline{\lambda}^{(1)}.
\end{align}
It follows from \eqref{e:density} that 
\begin{equation}\label{total12}
\P(\lambda_1> (4+\delta))= \frac{Z_{M-1,N-1}}{Z_{M,N}}\int_{(4+\delta)}^{L} e^{-N(y^{-1}\lambda_1-(y^{-1}-1)\log \lambda_1)} H_1(\lambda_1)~d\lambda_1 +\P(\lambda_1> L).
\end{equation}
Using Theorem \ref{t:ldp}, it follows that we can ignore the second term in the above display as it is exponentially smaller than the first term, if $L$ is sufficiently large. Using \eqref{e:step1}, Lemma \ref{l:intapp} and Lemma \ref{l:partition} it follows that
\begin{align}\label{final1}
\P(\lambda_1\in (4+\delta,L)) \leq \int_{(4+\delta)}^{L} \exp \left(-N I_{y}(\lambda_1-4) +O(\log N^{c\log \log N}) \right)~d\lambda_1.
\end{align} 
The upper bound in the theorem follows just by observing that $I_{y}(\cdot)$ is increasing in $\lambda_1$ and the $O(\cdot)$ term is uniform.
For the lower bound, observe using \eqref{e:step2} and \eqref{lb1} we get 
\begin{align}
\label{final2}
\P(\lambda_1\in (4+\delta,L)) &\geq \int_{(4+\delta)}^{L} \exp \left(-N I_{y}(\lambda_1-4) +O(\log N^{c\log \log N}) \right)~d\lambda_1,\\
&\geq \int_{(4+\delta)}^{(4+\delta)+1/N} \exp \left(-N I_{y}(\lambda_1-4) +O(\log N^{c\log \log N}) \right)~d\lambda_1,\\
&\ge \exp \left(-N I_{y}(\delta) +O(\log N^{c\log \log N}) \right).
\end{align}
\end{proof}

Next we supply the extra ingredients needed for the proof of Theorem \ref{t:ldpm=n}.

\begin{proof}[Proof of Theorem \ref{t:ldpm=n}]
The proof follows the same lines of the proof of Theorem \ref{t:ldpgen} by using the $M=N$ case of Lemma \ref{l:intapp} in \eqref{final1} and \eqref{final2}. The goal is to obtain the $-\log N$ term in the statement of the theorem. To do this we discretize the integral appearing in \eqref{final1} and \eqref{final2} in steps of $1/N,$ i.e., for 
\begin{align}\label{discretize}
\P(\lambda_1\in (4+\delta,L))\le \sum_{j=0}^{N(L-(4+\delta))}\int_{(4+\delta)+\frac{j}{N}}^{(4+\delta)+\frac{j+1}{N}}\exp \left(-N I(\lambda_1-4) +O(1) \right)d\lambda_1
\end{align}
Using a uniform lower bound on the derivative of $I_{\delta}$ for $\delta>\delta_0$ it follows that 
\begin{align*} \int_{(4+\delta)+\frac{j}{N}}^{(4+\delta)+\frac{j+1}{N}} \exp \left(-N I(\lambda_1) +O(1) \right)~d\lambda_1 \leq \exp(-NI(\delta))\frac{1}{N} \exp (-{I'(\delta)j}).
\end{align*}
Thus summing the above, it follows that the RHS in \eqref{discretize} is bounded by $\exp(-NI(\delta)-\log N+O(1)).$
The lower bound follows from the same argument using the $M=N$ case for \eqref{final2}.
\end{proof}
Using similar arguments we now finish the proof of Lemma \ref{l:toptwo}
\begin{proof}[Proof of Lemma \ref{l:toptwo}] 
The argument is similar to the proof of Theorems \ref{t:ldpgen} except we now decompose the joint density of $n$ eigenvalues in to the top two and the last $n-2$.  Similar to \eqref{def30}, define
\begin{align*}
H_{1,2}(\lambda_1,\lambda_2)&:=\int_{\underline{\lambda}^{(1,2)}} V(\lambda_1,\lambda_2; \underline{\lambda}^{(1,2)})^2 e^{-2\sum_{i=2}^N\lambda_i}f_{M-2,N-2} d\underline{\lambda}^{(1,2)}, 
\end{align*}
where $\underline{\lambda}^{(1,2)}:= (\lambda_3, \ldots , \lambda_{N})$ and $$V(\lambda_1,\lambda_2; \underline{\lambda}^{(1,2)}):=\prod_{j\neq 1,2}(\lambda_1-\lambda_j)\prod_{j\neq 1,2}(\lambda_2-\lambda_j).$$
For the moment fix $\lambda_1, \lambda_2$ satisfying 
\begin{align*}
\lambda_1\ge \lambda_2,
\lambda_1\ge (4+\delta),
\lambda_2 \ge (4+\frac{\delta}{2}).
\end{align*}
Similar to \eqref{lipconst1} we will now need to choose two functions  $g_{\lambda_1}(\cdot)$ and $g_{\lambda_2}(\cdot).$  We choose $g_{\lambda_1}$ exactly as in \eqref{lipconst1}. 
$g_{\lambda_2}$ is now chosen by replacing $\delta$ by $\delta/2$ in the definition of the former, i.e.,   
\begin{equation}\label{choice2}
g_{\lambda_2}(x)=\left\{\begin{array}{cc}
2\log (\lambda_2-x)-x &\text{ for } x\le 4+\frac{\delta}{4}\\
g_{\lambda_2}(4+\frac{\delta}{4}),& \text{otherwise}.
\end{array}\right.
\end{equation}

 We now get the following bound corresponding to \eqref{e:step1},
\begin{equation}
\label{e:step101}
H_{1,2}(\lambda_1,\lambda_2) \leq \int_{\underline{\lambda}^{(1,2)}} e^{\sum_{i=3}^N \left(g_{\lambda_1}(\lambda_i)+g_{\lambda_2}(\lambda_i)\right)}f_{M-2,N-2} d\underline{\lambda}^{(1,2)}. 
\end{equation}

The next result similar to Lemma \ref{l:intapp} with the exact same proof now gives a bound on the RHS of \eqref{e:step101}.
\begin{align}\label{est10}
\log(H_1(\lambda_1,\lambda_2))&=2N [\int \log (\lambda_1-x)d\mathsf{MP}(x)+\int \log (\lambda_2-x)d\mathsf{MP}(x)]-2N+O(1).
\end{align}

As in \eqref{total12}, for any chosen $L>0,$
\begin{align}\label{total13}
&\P(\lambda_1> (4+\delta),\lambda_2\ge (4+\frac{\delta}{2}) )\\
\nonumber
&\le  \frac{Z_{M-2,N-2}}{Z_{M,N}}\int_{\lambda_1\ge \lambda_2, 
(4+\delta)\le \lambda_1\le L,\lambda_2\ge (4+\frac{\delta}{2})} |\lambda_1-\lambda_2|e^{-N(\lambda_1+\lambda_2)} H_{1,2}(\lambda_1,\lambda_2)~d\lambda_1d\lambda_2 +\P(\lambda_1> L),\\
& =  \frac{Z_{M-2,N-2}}{Z_{M,N}}O(L)\int_{\lambda_1\ge \lambda_2, 
(4+\delta)\le \lambda_1\le L,\lambda_2\ge (4+\frac{\delta}{2})}e^{-N(\lambda_1+\lambda_2)} H_{1,2}(\lambda_1,\lambda_2)~d\lambda_1d\lambda_2 +\P(\lambda_1> L).
\end{align}
Using Lemma \ref{l:partition}, plugging in \eqref{est10} into above and choosing $L$ to be a large enough constant dependent on $\delta$ we get that 
$$
\P(\lambda_1\ge 4+\delta, \lambda_2\ge 4+\delta/2)\le e^{-N\left(I(\delta))+I(\frac{\delta}{2})\right)+O(1)}.
$$  
 The proof of the lemma is now complete by comparing the above upper bound to $\P(\lambda_1\ge (4+\delta))$ from Theorem \ref{t:ldpm=n}.
\end{proof}
\section{Comparison of Rate Functions}
\label{s:polyub}
Throughout we have been assuming $M-N=o(N)$ but in this section we will pin down quantitative dependence of our estimates on $M-N.$ In particular we analyze how the rate function $I_y(\cdot)$ depends on $y$. Throughout the following discussion for the ease of notation let $n=\frac{N}{2}, m_1=n+c, n_1=n-c$ for some non-negative integer $c.$ 
For notational simplification, $\P_{m,n}$ will denote the probability measure induced by the Wishart matrix of dimensions $m\times n.$
We now proceed to compare $\P_{m_1,n_1}(m
_1 \lambda_1\ge (4+\delta)n)$ and $\P_{n,n}(n
 \lambda_1\ge (4+\delta)n)$ for their direct relations to transversal fluctuations of polymers.  Also recall $g(n)$ defined right before the statement of Lemma \ref{l:approx}.
Let $y=\frac{n_1}{m_1}$ and  $\hat \delta$ be such that 
\begin{equation}\label{constraint1}
(4+\delta)n=(4+\hat \delta )m_1.
\end{equation}
Note that this implies $$\hat \delta=\delta-\frac{(1-y)}{2}(4+\delta)=\delta-(4+\delta)\frac{c}{n}+O(\frac{c^2}{n^2}).$$
We now state the key proposition of this section.
\begin{prp}\label{curvature}$$\P_{m_1,n_1}\left(\lambda_1\ge (4+\hat \delta)\right)=\P_{n,n}\left(\lambda_1\ge (4+ \delta)\right)e^{-\beta_{\delta}(\frac{c^2}{n})+O(\frac{c^3}{n^2}+g(n))},$$ where 
$\beta_{\delta}=-6-\int \log(4+ \delta-x){\rm d}\mathsf{MP}
+(6+\delta) \int \frac{1}{4+\delta-x} d\mathsf{MP} +2\int_0^{4} \frac{\log (4+\delta-x)}{2\pi \sqrt{x(4-x)}}~dx.
$
\end{prp}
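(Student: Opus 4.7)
The plan is to apply the sharp large deviation estimates of the previous section and reduce the claim to an analytic Taylor expansion identity. By Theorem \ref{t:ldpm=n}, $\log \P_{n,n}(\lambda_1 \geq 4+\delta) = -nI(\delta) - \log n + O(1)$, and by Theorem \ref{t:ldpgen} applied with $(M,N) = (m_1,n_1)$ (the hypotheses $m_1 \geq n_1$ and $1.1 n_1 \geq m_1$ hold for $c$ up to a small fraction of $n$, which covers the only regime of interest), one has $\log \P_{m_1,n_1}(\lambda_1 \geq 4+\hat\delta) = -n_1 I_y(\hat\delta) + O(g(n))$. Subtracting and absorbing $\log n$ into $g(n)$, the proposition reduces to showing
\begin{equation*}
n_1 I_y(\hat\delta) - n I(\delta) = \beta_\delta \cdot \frac{c^2}{n} + O\!\left(\frac{c^3}{n^2}\right).
\end{equation*}

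Writing $u = c/n$, elementary expansion gives $y - 1 = -2u + 2u^2 + O(u^3)$ and $\hat\delta - \delta = -(4+\delta)u + (4+\delta)u^2 + O(u^3)$. Treating $F(y,\delta') := I_y(\delta')$ as a smooth function of two variables, I would Taylor expand to second order around $(1,\delta)$, multiply by $n_1 = n(1-u)$, and subtract $nI(\delta) = nF(1,\delta)$. Collecting powers of $c$ yields an expression of the form
\begin{equation*}
n_1 I_y(\hat\delta) - n I(\delta) = A_\delta\, c + B_\delta\, \frac{c^2}{n} + O\!\left(\frac{c^3}{n^2}\right),
\end{equation*}
where $A_\delta = -I(\delta) - 2\partial_y F|_{(1,\delta)} - (4+\delta) I'(\delta)$ and $B_\delta$ is an explicit combination of $\partial_y F$, $\partial_\delta F$, and the three second partials of $F$ at $(1,\delta)$.

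The linear coefficient $A_\delta$ vanishes identically. This is forced by the exact reflection symmetry $T_{(1,1),(n+c,n-c)} \stackrel{d}{=} T_{(1,1),(n-c,n+c)}$ of exponential LPP which, via Proposition \ref{p:eigen}, makes the true probability an even function of $c$: since Theorem \ref{t:ldpgen} has error $O(g(n))$ uniformly in $c$ over the allowed range and $g(n)$ is sub-polynomial, any nonzero $\delta$-dependent $O(c)$ term in the asymptotic expansion would dominate the error for $c$ on polynomial scales, contradicting the evenness. The identity $A_\delta = 0$ can also be verified directly from the formula for $I_y$: using that the effective potential $V_y(\lambda) - 2\int \log(\lambda-x)\, d\mathsf{MP}_y(x)$ has vanishing $\lambda$-derivative at the edge $\lambda = b(y)$ (the equilibrium variational condition), the chain rule reduces the computation, and together with the normalization $\int x\, d\mathsf{MP}_y(x) = 1$ from \eqref{e:mpmean} the required identity follows.

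The main obstacle is evaluating $B_\delta$ and matching it with the stated $\beta_\delta$. The derivatives $\partial_\delta I_y|_{y=1} = I'(\delta) = 1 - 2\int (4+\delta-x)^{-1}\, d\mathsf{MP}$ and $\partial_{\delta\delta} I_y|_{y=1}$ follow immediately from \eqref{e:i}. The $y$-derivatives of $I_y$, however, require differentiating the integral $J_y(\delta)$ from \eqref{e: rate1} with respect to $y$, where both the endpoints $(1\pm\sqrt y)^2$ and the density of $\mathsf{MP}_y$ depend on $y$, with the density vanishing only as a square root at these moving endpoints. A naive application of Leibniz's rule must be handled with great care here because of this square-root behavior; a cleaner route is via the substitution $x = (1-\sqrt y)^2 + 4\sqrt y\, t$ (or equivalently $x = 1+y-2\sqrt y\cos\theta$), which parameterizes the support by a fixed interval and renders the $y$-dependence explicit. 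This substitution produces the arcsine-type weight $1/(2\pi\sqrt{x(4-x)})$, which is exactly the combination $\partial_y \rho_y|_{y=1} + d\mathsf{MP}/dx$ appearing in $\partial_y J_y|_{y=1}$, leading to the term $2\int_0^4 \log(4+\delta-x)/(2\pi\sqrt{x(4-x)})\, dx$ in $\beta_\delta$. After substantial algebraic simplification all contributions collapse to the stated expression for $\beta_\delta$; this final step, being lengthy but routine, is most naturally deferred to the appendix.
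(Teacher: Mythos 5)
Your overall strategy is the same as the paper's: use the sharp asymptotics (Theorem \ref{t:ldpm=n} for the square case and the decomposition behind Theorem \ref{t:ldpgen} for the rectangular case) to reduce the proposition to a second-order expansion of $n_1 I_y(\hat\delta)-nI(\delta)$ in $c/n$, and tame the $y$-dependence of the Marchenko--Pastur integrals by the affine substitution that maps $[0,4]$ onto $[(1-\sqrt y)^2,(1+\sqrt y)^2]$ -- this is precisely the change of variables on which Theorem \ref{t:intest} and the appendix computations rest, and it is also where the arcsine weight $1/(2\pi\sqrt{x(4-x)})$ enters, as you anticipate. So the route is not different; the question is whether you have actually closed it, and there are two genuine gaps.

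First, your primary argument that the linear coefficient $A_\delta$ vanishes is not valid. The reflection identity $T_{(1,1),(n+c,n-c)}\stackrel{d}{=}T_{(1,1),(n-c,n+c)}$ is a tautology here: both passage times correspond to the \emph{same} Wishart ensemble $(M,N)=(n+c,n-c)$ (one always takes $M\ge N$), so "evenness in $c$" of the true probability imposes no constraint whatsoever on the one-sided expansion of $n_1I_{y(c)}(\hat\delta(c))$ for $c\ge 0$; a function of $c\ge 0$ with nonzero slope at $0$ extends evenly to $c<0$ without any contradiction (think of $|c|$). To run a symmetry argument you would need to know that the rate-function formula itself extends smoothly and symmetrically across $c=0$, which is an additional claim you have not established (for $y>1$ the measure $\mathsf{MP}_y$ is not even a probability measure). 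The vanishing of the linear term is a concrete identity, and the paper proves it by computation: the coefficient $A$ of Theorem \ref{t:intest} is evaluated explicitly and then $\alpha_\delta=2+2\log(4+\delta)+2A-2\int\log(4+\delta-x)\,d\mathsf{MP}=0$; your fallback "equilibrium condition at the edge" sketch is too vague to substitute for this. Second, the heart of the proposition is the explicit value of the quadratic coefficient: the statement asserts a specific $\beta_\delta$, and what is used downstream (Proposition \ref{p:line3}) is that this coefficient is strictly positive. Your proposal stops at "after substantial algebraic simplification all contributions collapse to the stated expression," i.e., the computation that occupies Theorem \ref{t:intest}, Propositions \ref{p:inti1}--\ref{p:inti2}, Lemmas \ref{l:i21}--\ref{l:i222}, Lemmas \ref{l:0} and \ref{l:3}, and Theorem \ref{t:ratecompare} -- including the singular integrals, the arctan expansions, and the assembly of the partition-function, $\log(4+\hat\delta)$ and mean terms -- is entirely absent, and neither the matching with the stated $\beta_\delta$ nor its positivity is verified. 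As it stands the proposal is a correct plan coinciding with the paper's, but not a proof.
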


The proof of the above involves several steps of  analysis of the rate functions and for the ease of reader we break it into several steps.
By the proof of Theorem \ref{t:ldpgen}, it follows that  
$$
\P_{m_1,n_1}\left(\lambda_1\ge (4+\hat \delta)\right)=e^{A_0+A_1+A_2+A_3+A_4+O(g(n))},
$$
where 
\begin{align*}
A_0 &=\log \frac{Z_{m_1-1,n_1-1}}{Z_{m_1,n_1}},
A_1=\left(2n_1 \int \log(4+\hat \delta-x){\rm d}\mathsf{MP}_y \right),\,\,\\
A_2&=\left(-n_1 \int x{\rm d}\mathsf{MP}_y\right),
A_3 = (m_1-n_1) \log (4+\hat \delta),\,\, A_4 = -m_1 (4+\hat \delta).
\end{align*}

Similarly let $$
\P_{n,n}\left(\lambda_1\ge (4+ \delta)\right)=e^{B_0+B_1+B_2+B_3+B_4+O(\log n)},
$$
where $B_i$ is the analogous quantity to $A_i$ when $c=0$ i.e., $y=1.$ ($B_0:=\log(\frac{Z_{n-1,n-1}}{Z_{n,n}})$).
Thus the proof of Proposition \ref{curvature} will proceed by comparing   $A_0+A_1+A_2+A_3+A_4$ with $B_0+B_1+B_2+B_3+B_4$ by analyzing: 

\begin{align}
\label{a0}
A_{0}-B_1&=\log \frac{Z_{m_1-1,n_1-1}}{Z_{m_1,n_1}}-\log \frac{Z_{n-1,n-1}}{Z_{n,n}},\\
\label{a1}
A_1-B_1&= \left(2n_1 \int \log(4+\hat \delta-x){\rm d}\mathsf{MP}_y \right)-\left(2n \int \log(4+ \delta-x){\rm d}\mathsf{MP} \right),\\
\label{a2}
A_2-B_2&=\left(-n_1 \int x{\rm d}\mathsf{MP}_y\right)+\left(n \int x{\rm d}\mathsf{MP}\right),\\
\label{a3}
A_3-B_3&=(m_1-n_1) \log (4+\hat \delta),\\
\label{a4}
A_4-B_4 &= -m_1 (4+\hat \delta)+m (4+ \delta).
\end{align}

Although in principle the above bounds become better as $c$ increases, the computations cannot simply rely on perturbative arguments involving Taylor expansion for $c$. However for our purposes we only need to show that the bounds do not deteriorate. Thus the abstract coupling result stated in Theorem \ref{t:dom} suffices. This is proved in the following section using abstract facts about general  determinantal point process and could be of independent interest. The precise technical details for the proof of Proposition \ref{curvature} are provided in the Appendix (Section \ref{calculations}).

\section{Stochastic Inequalities for Point Processes}\label{secdom}
We will recall some basic facts, terminologies and notations about finite rank determinantal point processes on $\R_{+}$. However we will be brief in our treatment and we refer the interested reader to \cite{lyons} for a well rounded survey on the subject.
Let $\mu$ be a probability measure on $\R_{+}$. A collection $\Omega=(X_1,X_2,\ldots, X_{n})$ of $n$ random points on $\R_{+}$ is said to be a determinantal process with kernel $K$ (and background measure $\mu$) if there is a projection kernel $K$ of rank $n$ (i.e., $K(x,y)=\sum_{i=1}^{n} \phi_{i}(x)\phi_{i}(y)$ for an orthonormal set $\{\phi_1,\phi_2,\ldots, \phi_{n}\}$ in $L^2(\R_{+},\mu)$) such that the joint density of $(X_1,X_2,\ldots, X_{n})$ with respect to the product measure $\mu^{\otimes n}$ is proportional to 
$$ f(x_1,x_2, \ldots, x_{n})= \det[K(x_i,x_{j})_{1\leq i,j \leq n}].$$ 

Let $\phi_{\frac{M+N}{2}-1}, \phi_{\frac{M+N}{2}-2}, \ldots , \phi_{0}$ be the orthonormal functions in $L^2(\R_{+},\mathsf{MP})$ obtained by applying Gram-Schmidt procedure to the functions $x^{\frac{M+N}{2}-1}, x^{\frac{M+N}{2}-2}, \ldots, x, 1$ in that order. (Note that this implicitly assumes $M-N$ is even. For $M-N$ odd, one looks at polynomials with half integer degrees. $x^{1/2},x^{3/2} \ldots.$  Since our application will only involve the former case we will provide arguments only in the case $M-N$ being even.)
Consider the Kernel 
\begin{equation}\label{kernel1}
K_{M,N}(x,y)= \sum_{i=\frac{M-N}{2}}^{\frac{M+N}{2}-1} \phi_{i}(x)\phi_{i}(y),
\end{equation}
i.e., the projection kernel that projects onto the $N$ dimensional subspace of $L^2(\R_{+},\mathsf{MP})$ generated by the functions $x^{\frac{M-N}{2}}, x^{\frac{M-N}{2}+1}, \ldots , x^{\frac{M+N}{2}-1}.$ 
The following fact is well-known, and can easily be derived from the joint density of eigenvalues of Wishart matrix. 

\begin{theorem}
\label{t:det}
Let $M-N$ be even and let $X_{M\times N}$ ($M\geq N$) be a matrix of i.i.d.\ standard complex Gaussian entries. Then the eigenvalues $(\lambda_1,\lambda_2,\ldots, \lambda_{N})$ of $X^*X$ forms a determinantal point process on $\R_{+}$ with background measure $e^{-x}dx$, and projection kernel $K_{M,N}$.
\end{theorem}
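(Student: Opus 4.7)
The plan is to derive the determinantal structure directly from the explicit joint density of Wishart eigenvalues recorded in Section~\ref{s:wishart}, via a standard Andreief / Cauchy--Binet rewriting. Dropping the scaling factor $M$ used in \eqref{e:density}, the joint density of the unordered eigenvalues $(\lambda_1,\ldots,\lambda_N)$ of $X^*X$ on $\mathbb{R}_+^N$ is proportional to
$$\prod_{i<j}(\lambda_i-\lambda_j)^2 \, \prod_i \lambda_i^{M-N} \, e^{-\lambda_i}.$$
The factor $\prod_i e^{-\lambda_i}$ will play the role of the background-measure density, so the task reduces to rewriting the polynomial prefactor as $\det[K_{M,N}(\lambda_i,\lambda_j)]$ up to normalization.

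Because $M-N$ is even, the first step is to split the monomial weight symmetrically, $\prod_i \lambda_i^{M-N} = \bigl(\prod_i \lambda_i^{(M-N)/2}\bigr)^2$. Combined with the Vandermonde identity $\prod_{i<j}(\lambda_i-\lambda_j) = \det[\lambda_i^{k-1}]_{i,k=1}^{N}$, the polynomial factor becomes $\bigl(\det[\lambda_i^{(M-N)/2+k-1}]_{i,k=1}^{N}\bigr)^2$. The monomials $x^{(M-N)/2},\ldots,x^{(M+N)/2-1}$ span the same $N$-dimensional space of polynomials as $\phi_{(M-N)/2},\ldots,\phi_{(M+N)/2-1}$, so elementary column operations inside the determinant produce a nonzero constant $\alpha$ (a product of Gram--Schmidt coefficients) with
$$\det\bigl[\lambda_i^{(M-N)/2+k-1}\bigr]_{i,k=1}^{N} = \alpha\,\det\bigl[\phi_{(M-N)/2+k-1}(\lambda_i)\bigr]_{i,k=1}^{N}.$$
Applying the Cauchy--Binet identity $\det(A)^2 = \det(AA^{T})$ to the resulting matrix yields
$$\det[\phi_j(\lambda_i)]^2 \;=\; \det\Bigl[\textstyle\sum_j \phi_j(\lambda_i)\phi_j(\lambda_{i'})\Bigr]_{i,i'} \;=\; \det[K_{M,N}(\lambda_i,\lambda_{i'})].$$
Therefore the joint density is proportional to $\det[K_{M,N}(\lambda_i,\lambda_j)]\prod_i e^{-\lambda_i}$, which is the defining form of a determinantal point process on $\mathbb{R}_+$ with kernel $K_{M,N}$ and background measure $e^{-x}dx$.

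The structural identity above goes through for any choice of basis $\{\phi_j\}$ of the relevant polynomial subspace, so the only nontrivial part of the argument is constant bookkeeping: verifying that $K_{M,N}$ is genuinely a rank-$N$ orthogonal projection, equivalently that $\int K_{M,N}(x,x)\,e^{-x}dx = N$ so that the $k$-point correlation function interpretation is consistent. This is the step where the orthonormality of the $\phi_j$ enters; in the standard LUE presentation one orthonormalizes with respect to the background measure $e^{-x}dx$ (giving Laguerre polynomials up to normalization), which makes this trace check automatic. I expect this normalization to be the only delicate point in writing out the proof. The case of $M-N$ odd is handled by the same argument after replacing the monomial basis by the half-integer powers $x^{k+1/2}$ as the paper indicates, with no other change.
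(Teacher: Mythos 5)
Your proposal is correct and takes essentially the same route as the paper: both rewrite the squared Vandermonde times the monomial weight $\prod_i\lambda_i^{M-N}$ as $\det\bigl[\phi_j(\lambda_i)\bigr]^2=\det\bigl[K_{M,N}(\lambda_i,\lambda_j)\bigr]$ and then conclude via Cauchy--Binet and orthonormality of the $\phi_j$ (taken, as you correctly assume, with respect to the background measure $e^{-x}dx$). Your write-up is simply a more detailed version of the paper's short argument, including the change-of-basis and normalization bookkeeping that the paper leaves implicit.
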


\begin{proof} Recall from \eqref{e:density} that for any unnormalized vector $\underline{\lambda}$ the  eigenvalue density of $X^*X$ at $\underline \lambda$ is $$g(\underline{\lambda})\propto V(\underline{\lambda})^2\prod_{i=1}^{N}\lambda_i^{M-N}e^{-\sum_{i=1}^N\lambda_i} 
$$

Now for any $\underline \lambda=(\lambda_1,\lambda_2,\ldots, \lambda_n)$ an easy calculation using rules of computing determinants shows that  $${\rm{det}}\left[K(\lambda_i,\lambda_j)\right]_{i,j=1}^{n}=g(\underline \lambda) . 
$$ 
Thus an easy application of orthonormality of the $\phi_i$s and the Cauchy-Binet theorem (see \cite{lyons}) shows that $\underline \lambda$  is a determinantal point process.
 \end{proof}

Note also that \eqref{kernel1} implies that  $K_{M,N}\succeq K_{M+1,N-1}$ as operators where the inequality holds in the positive definite sense.  
We now quote the following result about abstract coupling of determinantal processes whose kernels are ordered in the positive definite sense. 

\begin{thm}\cite[Theorem 3.8]{lyons}If $K_1$ and $K_2$ are two locally trace class positive contractions on $L^2(\R_+,\mu)$ such that  $K_1\preceq K_2$ then $\cP_{K_1}\preceq \cP_{K_2}$ where $\cP_K$ denotes the determinantal point process associated with the kernel $K$ and $\preceq$ is used to denote the usual stochastic domination via the probabilities of increasing events.
\end{thm}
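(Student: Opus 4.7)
The plan is to proceed in two stages: first reduce the statement to the case of projection kernels via a spectral mixture representation of determinantal processes, and then handle the projection case by an explicit sampling coupling.

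For the reduction, I would invoke the Hough--Krishnapur--Peres--Vir\'ag representation: any determinantal point process with locally trace-class positive contraction kernel $K$ having spectral decomposition $K = \sum_n \lambda_n \phi_n \phi_n^*$ can be sampled in two steps. First draw independent Bernoulli$(\lambda_n)$ variables $I_n$, form the random projection kernel $\Pi = \sum_n I_n \phi_n \phi_n^*$, and then sample $\cP_{\Pi}$. This exhibits $\cP_K$ as a mixture over projection determinantal processes, so if stochastic domination is established for all pairs of projection kernels $P_1 \preceq P_2$, one hopes to transfer it to general positive contractions by coupling the Bernoulli mixtures.

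For the projection case, suppose $P_1 \preceq P_2$ are projection kernels with ranges $V_1 \subseteq V_2$. Extend an orthonormal basis $\psi_1, \ldots, \psi_{k_1}$ of $V_1$ to an orthonormal basis $\psi_1, \ldots, \psi_{k_2}$ of $V_2$. Using the sequential HKPV sampling algorithm for $\cP_{P_2}$---pick one point at a time from the density obtained by restricting the current kernel to the orthogonal complement of the ``delta-functions'' at previously-chosen points---one can arrange to exhaust the $V_1$-part of the basis first. The first $k_1$ points produced should then form a sample of $\cP_{P_1}$, while the remaining $k_2 - k_1$ points are sampled from the complementary subspace $V_2 \ominus V_1$, and the full joint measure will factor correctly as the projection determinantal process with kernel $P_2$. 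This yields an almost-sure coupling $\cP_{P_1} \subseteq \cP_{P_2}$, which in particular gives stochastic domination of increasing events.

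The main obstacle is the reduction step: since the spectral bases of $K_1$ and $K_2$ generally do not align, one cannot directly couple the Bernoulli trials across the two mixture representations so as to obtain a pointwise operator inequality $\Pi^{(1)} \preceq \Pi^{(2)}$. I expect the way around this will go through the Fredholm-determinant identity for void probabilities $\P_K[N(B) = 0] = \det(I - K|_B)$, which is antitone in $K$ because for positive contractions $A \preceq B$ on $L^2(B)$ one has $\det(I - B) \le \det(I - A)$. Joint void probabilities on finite collections of Borel sets are handled by the same identity applied to $\cup_i B_i$, and the upgrade from monotonicity of these joint void probabilities on a generating algebra of cylinder events to a genuine stochastic order on all increasing events can then be carried out via a Strassen-type construction together with a martingale/consistency argument on a refining sequence of partitions.
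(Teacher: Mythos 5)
This statement is quoted by the paper directly from Lyons (Theorem 3.8 of \cite{lyons}); the paper offers no proof of it, so the only question is whether your argument would actually establish it, and as written it would not. The decisive problem is your projection-case step. The factorization you assert is false: take $V_1=\mathrm{span}(\psi_1)\subset V_2=\mathrm{span}(\psi_1,\psi_2)$. The determinantal process with kernel $P_2$ has two points with joint density proportional to $|\psi_1(x)\psi_2(y)-\psi_1(y)\psi_2(x)|^2$, which is not of the form ``one point a sample of $\cP_{P_1}$ with density $|\psi_1(x)|^2$, the other drawn from $V_2\ominus V_1$''; that recipe gives $|\psi_1(x)|^2|\psi_2(y)|^2$ and the determinant's cross terms are lost. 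Nor does the sequential HKPV algorithm let you ``exhaust the $V_1$-part of the basis first'': after points $x_1,\dots,x_j$ are chosen, the conditional kernel is the projection onto the orthogonal complement \emph{inside $V_2$} of the random vectors $K_2(\cdot,x_i)$, a random subspace not aligned with the fixed splitting $V_1\oplus(V_2\ominus V_1)$, so there is no step of the algorithm at which a choice adapted to that splitting is available. The existence of an inclusion coupling for nested projections is precisely the nontrivial content of Lyons' theorem (already in the codimension-one case, ``adding one function adds one point''), and in \cite{lyons} it is obtained by quite different means (exterior-algebra/inductive arguments in the discrete setting, with general positive contractions realized via projections on an enlarged space and a limiting step in the continuum); none of that work is replaced by the asserted factorization.

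Your fallback for the reduction step also fails. Antitonicity of Fredholm determinants does give $\P_{K_2}[N(B)=0]\le \P_{K_1}[N(B)=0]$ for all Borel $B$ (and for finite unions), but ordering of avoidance probabilities is strictly weaker than stochastic domination: increasing events such as $\{N(B)\ge 2\}$ or $\{N(B_1)\ge 1,\,N(B_2)\ge 1\}$ are not monotone functionals of void events, and Strassen's theorem takes as \emph{input} the inequality $\P_{K_1}(A)\le \P_{K_2}(A)$ for all increasing $A$ (producing the coupling as output); it cannot bootstrap that inequality from a generating class of cylinder/void events, and no martingale or consistency argument fixes this, since the law being determined by voids does not make the \emph{order} determined by voids. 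Note also that if void monotonicity did imply domination, the theorem would follow in one line for arbitrary ordered kernels and your projection stage would be superfluous, which signals that the plan is internally inconsistent. So the genuine obstacle you correctly identify --- that the spectral Bernoulli mixtures of $K_1$ and $K_2$ cannot be coupled termwise --- remains unresolved, and both halves of the proposed proof are missing their key steps.
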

As a direct consequence of the above and the discussion preceding that we have the following stochastic ordering result. If $X_1$ and $X_2$ are two random matrices with standard complex gaussian entries of dimensions $m+1\times n-1$ and  $m\times n$ respectively with  $(\tilde{\lambda}_1\geq \tilde{\lambda}_2 \geq \cdots \geq \tilde{\lambda}_{n-1})$ and $(\tilde{\lambda}_1\geq \tilde{\lambda}_2 \geq \cdots \geq \tilde{\lambda}_{n})$ being the ordered eigenvalue sequence of $X_1^*X_1$ and $X_2^*X_2$ respectively following laws $\P_{m_1,n_1}$ and $\P_{m,n}$.
\begin{cor}$\P_{m_1,n_1}\preceq \P_{n,n}$ and hence in particular $\lambda_1$ stochastically dominates $\tilde \lambda_1.$
\end{cor}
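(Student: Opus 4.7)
The plan is to apply the quoted stochastic ordering theorem of Lyons (Theorem 3.8 of \cite{lyons}) directly to the kernels $K_{m_1,n_1}$ and $K_{n,n}$ identified by Theorem \ref{t:det}. The entire proof reduces to verifying that these two projection kernels are ordered in the positive semidefinite sense.

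I would first spell out the ranges of the two projections explicitly. With $m_1 = n+c$ and $n_1 = n-c$ (for $c\geq 0$ a non-negative integer, as in the running notation of the section), the formula \eqref{kernel1} shows that $K_{m_1,n_1}$ is the orthogonal projection onto $\mathrm{span}\{x^{c}, x^{c+1}, \ldots, x^{n-1}\}$, while $K_{n,n}$ is the orthogonal projection onto $\mathrm{span}\{1, x, \ldots, x^{n-1}\}$, both regarded as subspaces of the same ambient $L^2(\R_+,\mu)$. Since the former is a subspace of the latter, we have $K_{m_1,n_1} \preceq K_{n,n}$ as operators.

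Next I would invoke the quoted theorem of Lyons: because the two kernels are finite-rank projections (hence locally trace class positive contractions) and satisfy $K_{m_1,n_1} \preceq K_{n,n}$, the associated determinantal processes satisfy $\cP_{K_{m_1,n_1}} \preceq \cP_{K_{n,n}}$. Combined with Theorem \ref{t:det}, this is exactly the law-level statement $\P_{m_1,n_1} \preceq \P_{n,n}$.

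Finally, for the conclusion about the largest eigenvalues, I would observe that for any $t \geq 0$ the event $\{\lambda_1 \geq t\}$ coincides with the increasing event that the point configuration has at least one point in $[t,\infty)$. Therefore the stochastic ordering of the point processes immediately yields $\P_{m_1,n_1}(\tilde{\lambda}_1 \geq t) \leq \P_{n,n}(\lambda_1 \geq t)$ for every $t$, which is exactly the stochastic domination of $\tilde{\lambda}_1$ by $\lambda_1$. There is no genuine obstacle here --- all the substantive work has already been packaged into Theorem \ref{t:det} and into the statement of Lyons' theorem, and the corollary is simply a matter of reading off the subspace inclusion of the ranges of the two kernels.
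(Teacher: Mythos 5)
Your proposal is correct and is essentially the paper's own argument: identify the two eigenvalue ensembles as determinantal projection processes via Theorem \ref{t:det}, note that the range of $K_{m_1,n_1}$ (spanned by $x^{c},\ldots,x^{n-1}$, since $m_1-n_1=2c$ is even) is contained in that of $K_{n,n}$ (spanned by $1,\ldots,x^{n-1}$) over the same background measure, and invoke Lyons' Theorem 3.8 plus the increasing event $\{\text{at least one point in }[t,\infty)\}$ to transfer the ordering to $\lambda_1$. The only cosmetic difference is that you compare $(m_1,n_1)$ with $(n,n)$ in one step via the nested spans, whereas the paper phrases the kernel inequality for a single step $(M,N)\to(M+1,N-1)$; both are immediate from the same containment of monomial spans.
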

Given the above preparations we are now ready to prove Theorems \ref{t:mainu} and \ref{t:mainl}.
\section{Proof of Theorem \ref{t:mainu}: Transversal Fluctuation Upper Bound}\label{ptu}
Throughout this section, for notational convenience we shall assume  that $n$ is even.  The reader will notice that the proofs go through verbatim for $n$ odd by considering the polymer to $n+1$ instead. We will also denote $(\log n)^{C\log \log n}$ by $h(n)$ where $2C$ is bigger than $c$ appearing in the definition of $g(n)$ defined right before Lemma \ref{l:approx}.
We start by outlining the basic steps. 
For $v\in \llbracket 1,n \rrbracket ^2$, let $\Gamma_{n}(v)$ denote the maximal weight path from $\mathbf{1}$ to $\mathbf{n}$ passing through $v$ and let $\mathcal{R}_{n}$ denote the set of all vertices $v=(v_1,v_2)\in \llbracket 0,n \rrbracket ^2$ such that $|v_1-v_2| \ge  n^{1/2}h(n)$. 
\begin{figure}[h]
\centering
\includegraphics[scale=.8]{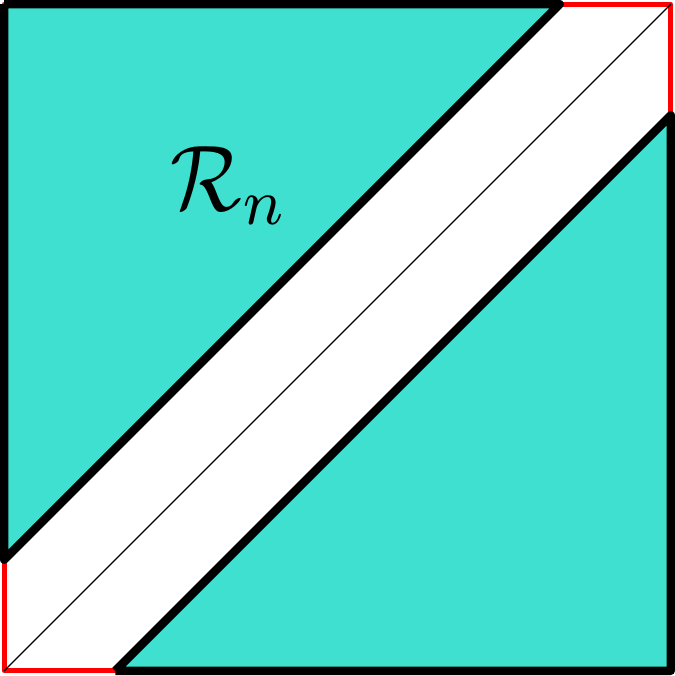}
\caption{Figure illustrating the region $\mathcal{R}_n$, described above which the polymer conditioned on the upper tail event, avoid.
}
\end{figure}
Clearly it suffices to show that 
$$ \sum_{v\in \mathcal{R}_{n}} \dfrac{\P(\ell (\Gamma_{n}(v))\geq (4+\delta)n)}{\P(T_{n}\geq (4+\delta)n)}=o(1).$$

We shall control this sum by separately summing over $v\in \mathcal{R}_{n}(t)$ where $t\in [2n]$, $\mathcal{R}_{n}(t): \mathcal{R}_{n}\cap \mathbb{L}_{t}$ where $\mathbb{L}_{t}$ denotes the line 

\begin{equation}\label{antidiag} \mathbb{L}_{t}:=\{(v_1,v_2)\in \Z^2: v_1+v_2=t\}.
\end{equation}

Recall from \eqref{transversal1} that $D_n(t)$ is the transversal fluctuation of the geodesic $\Gamma_n$ at time $t$.  
Since $\Gamma_n$ is a nearest neighbor path, we have $D_n(t)\le t$ for all $t,$ hence it suffices to prove the following proposition.

\begin{proposition}
\label{p:line}
For each $t\in \llbracket \frac{n^{1/2} h(n)}{2}, n-\frac{n^{1/2} h(n)}{2}\rrbracket$, we have 
$$\sum_{v\in \mathcal{R}_{n}(2t)} \dfrac{\P(\ell (\Gamma_{n}(v))\geq (4+\delta)n)}{\P(T_{n}\geq (4+\delta)n)}=o(n^{-1}).$$
\end{proposition}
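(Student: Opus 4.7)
The plan is to bound the summand associated with each $v=(t+c,t-c)\in \mathcal{R}_n(2t)$ (so $|c|\ge n^{1/2}h(n)/2$; WLOG $c>0$) by a super-polynomially small quantity, after which summing over the at most $n$ such vertices yields the required $o(n^{-1})$. Setting $T_1:=T_{\mathbf{1},v}$ and $T_2:=T_{v,\mathbf{n}}$, these are independent (their defining blocks of the i.i.d.\ field are disjoint up to the single vertex $v$) and $\ell(\Gamma_n(v))=T_1+T_2+O(X_v)$; since $X_v$ is exponential and hence at most $\log^2 n$ outside a probability $n^{-\omega(1)}$ event, it is enough to upper bound $\P(T_1+T_2\ge (4+\delta)n-\log^2 n)$ and compare with Theorem \ref{t:ldpm=n}.

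The key step is to apply Proposition \ref{curvature} separately to $T_1$ and $T_2$. By Proposition \ref{p:eigen}, $T_1$ is distributed as the top eigenvalue of an $(t+c)\times(t-c)$ complex Wishart matrix; translating the event $T_1\ge (4+\delta_1)t$ into $\lambda_1\ge 4+\hat\delta_1$ via $(4+\hat\delta_1)(t+c)=(4+\delta_1)t$ and invoking Proposition \ref{curvature} (with its $n$ playing the role of our $t$) followed by Theorem \ref{t:ldpm=n} gives
\[
\log\P\bigl(T_1\ge (4+\delta_1)t\bigr)\le -tI(\delta_1)-\log t-\beta_{\delta_1}\frac{c^2}{t}+O\!\left(\frac{c^3}{t^2}+h(t)\right),
\]
and symmetrically for $T_2$ with $(n-t)$ in place of $t$. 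To handle the convolution I discretize: using independence,
\[
\P\bigl(T_1+T_2\ge (4+\delta)n\bigr)\le \sum_{k=0}^{O(n)} \P(T_1\ge k)\,\P\bigl(T_2\ge (4+\delta)n-k-1\bigr),
\]
and writing $k=(4+\delta_1)t$ with $\delta_2$ forced by $t\delta_1+(n-t)\delta_2=n\delta$, convexity of $I$ gives $tI(\delta_1)+(n-t)I(\delta_2)\ge nI(\delta)$, with equality only at the saddle $\delta_1=\delta_2=\delta$. At this saddle the two curvature terms combine into $\beta_\delta c^2 n/(t(n-t))$, and the $O(n)$-term sum contributes at most a polynomial prefactor absorbed in $O(h(n))$, yielding
\[
\log\P\bigl(T_1+T_2\ge (4+\delta)n\bigr)\le -nI(\delta)-\log t-\log(n-t)-\beta_\delta\,\frac{c^2 n}{t(n-t)}+O(h(n)).
\]

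Dividing by $\P(T_n\ge (4+\delta)n)=\exp(-nI(\delta)-\log n+O(1))$ from Theorem \ref{t:ldpm=n}, the per-vertex ratio is bounded by
\[
\frac{n}{t(n-t)}\exp\!\left(-\beta_\delta\frac{c^2 n}{t(n-t)}+O(h(n))\right).
\]
On $\mathcal{R}_n(2t)$ one has $c^2\ge nh(n)^2/4$ and $t(n-t)\le n^2/4$, forcing $c^2 n/(t(n-t))\ge h(n)^2$; each summand is therefore $\exp(-\Omega(h(n)^2))$, super-polynomially small, and summing over the $O(n)$ vertices in $\mathcal{R}_n(2t)$ comfortably yields $o(n^{-1})$.

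The principal obstacle will be that Proposition \ref{curvature} carries an $O(c^3/t^2)$ error which ceases to be dominated by the curvature gain $\beta c^2/t$ once $c\gtrsim t^{2/3}$. In that highly skewed regime, however, the curvature penalty $\beta_\delta c^2 n/(t(n-t))$ is already of order $n^{1/3}$ or larger, so I would bypass Proposition \ref{curvature} by dominating $T_1$ via Theorem \ref{t:dom} by the last passage time of a square block and applying the subadditive tail bound of Proposition \ref{t:diaguniform}; this still produces super-polynomial suppression, at the cost only of weaker constants. Positivity of $\beta_\delta$ for $\delta\in [\delta_0,L]$, which is needed throughout, is verified by direct inspection of its explicit formula.
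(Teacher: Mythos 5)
Your overall strategy is the paper's: replace $\ell(\Gamma_n(v))$ by an independent sum $T_1+T_2'$, discretize the convolution, extract a curvature penalty from the square-versus-rectangle comparison (Theorem \ref{t:ldpm=n}, Theorem \ref{t:ldpgen}, Proposition \ref{curvature}), and divide by the sharp lower bound on $\P(\cU_\delta)$. But there is a genuine gap where you apply this vertex by vertex. For $v=(t+c,t-c)\in\mathcal{R}_n(2t)$ the offset $c$ ranges all the way up to $t$, and in that skewed regime the tools you invoke are simply out of range: Proposition \ref{curvature} is an expansion valid as $c/n\to 0$ (and your stated threshold is off — the error $O(c^3/t^2)$ is dominated by the gain $\beta c^2/t$ exactly when $c\ll t$, not $c\lesssim t^{2/3}$), and Theorem \ref{t:ldpgen} itself requires aspect ratio at most $1.1$, i.e.\ $c\lesssim t/21$. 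Your proposed fallback — dominate $T_1$ by the \emph{square} block via Theorem \ref{t:dom} and use Proposition \ref{t:diaguniform} — cannot work: it discards precisely the off-diagonal penalty, and at the balanced split $\delta_1=\delta_2=\delta$ of the convolution there is no convexity gain, so the product bound is only $e^{-nI(\delta)+O(1)}$ and the ratio against $\P(\cU_\delta)=e^{-nI(\delta)-\log n+O(1)}$ is polynomial in $n$, not super-polynomially small. The suppression in the skewed regime must come from the skew itself, which square-domination erases.

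The paper's fix is to use Theorem \ref{t:dom} in the opposite direction: every $v\in\mathcal{R}_n(2t)$ is stochastically dominated by the \emph{least} skewed vertex $v_0(t)=(t+\tfrac{n^{1/2}h(n)}{2},\,t-\tfrac{n^{1/2}h(n)}{2})$ on the same antidiagonal (Observation \ref{o:dom2}), so the curvature machinery is run only once, at offset $n^{1/2}h(n)/2$, where Proposition \ref{curvature} applies and already yields a $\Theta(h(n)^2)$ gain (Proposition \ref{p:line3}), after which summing over the $O(n)$ vertices is trivial. A second point you gloss over, which the paper treats explicitly, is the extreme part of the convolution when one segment is short: there $\delta_1$ can be arbitrarily large, Theorem \ref{t:ldpgen}, Theorem \ref{t:ldpm=n} (in usable form) and Proposition \ref{curvature} require $\delta_1$ in a compact window, and $I$ loses strong convexity at infinity; the paper handles this with the uniform-in-$\delta$ subadditive bound Proposition \ref{t:diaguniform} together with the modified convexity estimate Lemma \ref{strongconvexity} and Lemma \ref{est98}, which confine the relevant $(\delta_1,\delta_2)$ to the window \eqref{regime12} before the sharp asymptotics are invoked. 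Incorporating these two repairs — domination toward $v_0$ rather than toward the diagonal, and the uniform large-$\delta$ control — would turn your outline into the paper's argument; the remaining differences (unit mesh versus mesh $\e_n$, truncating $X_v$ at $\log^2 n$ instead of Observation \ref{o:dom}) are cosmetic.
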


For the remainder of this section, let us fix $t\in \llbracket \frac{n^{1/2} h(n)}{2}, n-\frac{n^{1/2} h(n)}{2}\rrbracket$. We first make the following basic observation which will be useful. 

\begin{observation}
\label{o:dom}
With the above notation; $\ell (\Gamma_{n}(v))$ is stochastically dominated by $T_{\mathbf{1},v}+T'_{v,\mathbf{n}}$ where $T'_{v,\mathbf{n}}$ is an independent copy of $T_{v,\mathbf{n}}$.
\end{observation}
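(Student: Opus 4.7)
The key identity to exploit is that any up/right path from $\mathbf{1}$ to $\mathbf{n}$ passing through $v$ decomposes uniquely into an up/right path from $\mathbf{1}$ to $v$ followed by an up/right path from $v$ to $\mathbf{n}$, with $v$ being the unique shared vertex. Since the weight function sums $X_u$ over visited vertices, this gives the deterministic identity
\[
\ell(\Gamma_n(v)) = T_{\mathbf{1},v} + T_{v,\mathbf{n}} - X_v,
\]
where the subtraction corrects for $X_v$ being double counted. My plan is to rewrite this as $\ell(\Gamma_n(v)) = T_{\mathbf{1},v} + \widehat{T}_{v,\mathbf{n}}$ where $\widehat{T}_{v,\mathbf{n}} := T_{v,\mathbf{n}} - X_v$, and then to exploit the independence of the two summands together with the simple bound $\widehat{T}_{v,\mathbf{n}} \le T_{v,\mathbf{n}}$.

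The first step is to verify the independence claim. The variable $T_{\mathbf{1},v}$ is measurable with respect to $\sigma(X_u : \mathbf{1} \preceq u \preceq v)$, since every up/right path from $\mathbf{1}$ to $v$ lies in that rectangle. On the other hand, $\widehat{T}_{v,\mathbf{n}} = \max_{\gamma} \sum_{u \in \gamma,\, u \neq v} X_u$, where the maximum is over up/right paths from $v$ to $\mathbf{n}$, so it is measurable with respect to $\sigma(X_u : v \preceq u \preceq \mathbf{n},\, u \neq v)$. These two index sets are disjoint, so by the independence of the $X_u$'s, $T_{\mathbf{1},v}$ and $\widehat{T}_{v,\mathbf{n}}$ are independent.

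The second step is the stochastic comparison $\widehat{T}_{v,\mathbf{n}} \preceq_{st} T_{v,\mathbf{n}}$. Since $X_v \ge 0$ almost surely (as $X_v$ is exponential), we have the pointwise bound $\widehat{T}_{v,\mathbf{n}} = T_{v,\mathbf{n}} - X_v \le T_{v,\mathbf{n}}$ on the common probability space, which trivially gives a monotone coupling and hence stochastic domination. Combining with independence on both sides of the desired inequality, we obtain
\[
\ell(\Gamma_n(v)) = T_{\mathbf{1},v} + \widehat{T}_{v,\mathbf{n}} \preceq_{st} T_{\mathbf{1},v} + T'_{v,\mathbf{n}},
\]
where $T'_{v,\mathbf{n}}$ is an independent copy of $T_{v,\mathbf{n}}$ (independent of $T_{\mathbf{1},v}$). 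There is no substantive obstacle here; the statement is essentially a bookkeeping observation once one notices that excluding the weight at the single shared vertex $v$ decouples the two halves. The only thing to be careful about is the double counting of $X_v$ and that the resulting ``hat'' quantity is indeed supported on variables disjoint from those of $T_{\mathbf{1},v}$.
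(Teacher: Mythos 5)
Your proof is correct and follows essentially the same route as the paper: both start from the identity $\ell(\Gamma_n(v)) = T_{\mathbf{1},v} + T_{v,\mathbf{n}} - X_v$, use that the part excluding $X_v$ is supported on weights disjoint from (hence independent of) those determining $T_{\mathbf{1},v}$, and then restore the nonnegative weight at $v$. The only cosmetic difference is that the paper realizes the domination by an explicit coupling (adding a fresh copy $X'_v$ so that $T_{v,\mathbf{n}} - X_v + X'_v \stackrel{d}{=} T_{v,\mathbf{n}}$), whereas you invoke the standard fact that adding an independent variable preserves stochastic order.
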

\begin{proof} Note that $\ell (\Gamma_{n}(v))=T_{\mathbf{1},v}+T_{v,\bn}-X_v$ since $X_v$ (the exponential variable corresponding to $v$ see Section \ref{setting}) contributes to both $T_{\mathbf{1},v}$ and $T_{v,\bn}$. Thus $\ell (\Gamma_{n}(v))\preceq T_{\mathbf{1},v}+T_{v,\bn}-X_v+X'_v$ where $X_v$ is an independent copy of $X'_v.$ Now the proof is complete by noticing that $T_{v,\bn}-X_v+X'_v$ has the same law as $T_{v,\bn}$ and is independent of $T_{\mathbf{1},v}.$
\end{proof}

Now the monotonicity result in Theorem \ref{t:dom} and the obvious symmetry about the diagonal, immediately leads to the following observation, which will reduce the task of proving Proposition \ref{p:line} to proving it for one choice of $v$.

\begin{observation}
\label{o:dom2} 
Fix $t\in \llbracket n^{1/2}\frac{h(n)}{2}, n-n^{1/2}\frac{h(n)}{2}\rrbracket$. For each $v\in \mathcal{R}_n(2t)$, $\ell(\Gamma_{n}(v))$ is stochastically dominated by  $T_{\mathbf{1},v_0}+T'_{v_{0},\mathbf{n}}$  where $v_0=v_0(t):=(t+\frac{n^{1/2}h(n)}{2}, t-\frac{n^{1/2}h(n)}{2})$.
\end{observation}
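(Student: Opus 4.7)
The plan is to combine Observation \ref{o:dom} with iterated applications of Theorem \ref{t:dom}, reducing the claim to two independent rectangle-by-rectangle comparisons, one for each leg of any path through $v$.

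First, by the reflection symmetry of the model about the main diagonal, I may assume $v_1\ge v_2$. Then $v_1+v_2=2t=v_{0,1}+v_{0,2}$ and $v_1-v_2\ge n^{1/2}h(n)=v_{0,1}-v_{0,2}$, so $v_1\ge v_{0,1}$ and $v_2\le v_{0,2}$. Observation \ref{o:dom} gives
$$\ell(\Gamma_n(v))\preceq T_{\mathbf{1},v}+T'_{v,\mathbf{n}},$$
where $T_{\mathbf{1},v}$ and $T'_{v,\mathbf{n}}$ are independent (and the analogous decomposition at $v_0$ is also independent). Since stochastic domination of sums of independent random variables is implied by termwise stochastic domination (just couple the two pairs marginally), it suffices to establish
$$T_{\mathbf{1},v}\preceq T_{\mathbf{1},v_0}\quad\text{and}\quad T_{v,\mathbf{n}}\preceq T_{v_0,\mathbf{n}}.$$

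For the first inequality, the rectangles $[\mathbf{1},v]$ and $[\mathbf{1},v_0]$ have equal semi-perimeter $v_1+v_2$, but the former is strictly more elongated. Since $v_1-v_2$ and $v_{0,1}-v_{0,2}$ have the same parity (both equal $v_1+v_2\pmod 2$), I can iterate Theorem \ref{t:dom} along the sequence
$$(v_1,v_2),\,(v_1-1,v_2+1),\,\ldots,\,(v_{0,1},v_{0,2}),$$
each step yielding $T_{(1,1),(M-1,N+1)}\succeq T_{(1,1),(M,N)}$ and moving the rectangle strictly closer to square, until one arrives at $v_0$. This gives $T_{\mathbf{1},v}\preceq T_{\mathbf{1},v_0}$. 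The second inequality, $T_{v,\mathbf{n}}\preceq T_{v_0,\mathbf{n}}$, follows by the identical argument applied to the rectangles $[v,\mathbf{n}]$ and $[v_0,\mathbf{n}]$ of dimensions $(n-v_1+1)\times(n-v_2+1)$ and $(n-v_{0,1}+1)\times(n-v_{0,2}+1)$ respectively; these again share their semi-perimeter and the former is the more elongated. Combining the two dominations with Observation \ref{o:dom} yields the claim.

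The only subtlety lies in the possible odd parity of $v_1-v_2$, in which case Theorem \ref{t:dom} does not apply exactly as stated; this is resolved by the odd-parity extension of Theorem \ref{t:dom} referenced in Section \ref{secdom}. No step of the argument is delicate: the whole force of the assertion is carried by the monotonicity built into Theorem \ref{t:dom}, which is precisely what makes $v_0$, sitting on the boundary of $\mathcal{R}_n(2t)$, the worst case among all $v\in\mathcal{R}_n(2t)$.
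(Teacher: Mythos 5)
Your argument is correct and is essentially the paper's own: Observation \ref{o:dom2} is obtained there exactly as you do, by combining Observation \ref{o:dom} with iterated application of the monotonicity in Theorem \ref{t:dom} (separately on the two independent legs, which share semi-perimeter with the corresponding legs through $v_0$) and the reflection symmetry about the diagonal. Your parity caveat is in fact moot, since $v_1+v_2=2t$ forces $v_1-v_2$ to be even at every step of the interpolation.
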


Observation \ref{o:dom2} reduces Proposition \ref{p:line} to the following. 

\begin{proposition}
\label{p:line2}
Fix $t\in  \llbracket n^{1/2}\frac{h(n)}{2}, n-n^{1/2}\frac{h(n)}{2}\rrbracket$. For $v_0$ as above we have, 
\begin{equation}
\label{e:line2}
\dfrac{\P(T_{\mathbf{1},v_0}+T'_{v_{0},\mathbf{n}} \geq (4+\delta)n)}{\P(T_{n}\geq (4+\delta)n)}=o(n^{-2}).
\end{equation}
\end{proposition}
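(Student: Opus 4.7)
Our plan is to leverage the independence of $T_1:=T_{\mathbf{1},v_0}$ and $T_2:=T'_{v_0,\mathbf{n}}$, the LUE correspondence (Proposition~\ref{p:eigen}), the rectangular large deviation bound (Theorem~\ref{t:ldpgen}), and the quadratic-in-skew correction supplied by Proposition~\ref{curvature}. Via Proposition~\ref{p:eigen}, $T_1$ has the law of the largest eigenvalue of an (unscaled) Wishart matrix of dimensions $(M_1,N_1)=(t+c,t-c)$, and $T_2$ of a Wishart of dimensions $(M_2,N_2)=(n-t+c,n-t-c)$, with $c=n^{1/2}h(n)/2$; in particular $y_i:=N_i/M_i$ differs from $1$ by $O(c/n)$, placing us squarely in the regime where Proposition~\ref{curvature} applies.

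To estimate the numerator we discretize the range of $T_1$ on a grid of mesh $M_1/n$: for $a_j:=M_1(4+j/n)$,
\[
\P(T_1+T_2\ge (4+\delta)n)\le \sum_j \P(T_1\ge a_j)\,\P(T_2\ge (4+\delta)n-a_{j+1})+\P(T_1\ge M_1 L),
\]
where the tail past a large but fixed $L$ is negligible by Proposition~\ref{t:diaguniform}, and the remaining sum has $O(n)$ terms. To the generic term we apply Theorem~\ref{t:ldpgen} on each factor, so that
\[
\P(T_1\ge M_1(4+\delta_1))\,\P(T_2\ge M_2(4+\delta_2))\le \exp\!\bigl(-N_1 I_{y_1}(\delta_1)-N_2 I_{y_2}(\delta_2)+O(g(n))\bigr),
\]
valid uniformly over $\delta_i$ in a fixed compact range, under the approximate constraint $M_1(4+\delta_1)+M_2(4+\delta_2)=(4+\delta)n+O(1)$.

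The key step is to apply Proposition~\ref{curvature} separately to each factor in order to pass to square rate functions. Introducing the square-case parameters $\tilde\delta_i$ defined by $t(4+\tilde\delta_1)=M_1(4+\delta_1)$ and $(n-t)(4+\tilde\delta_2)=M_2(4+\delta_2)$, Proposition~\ref{curvature} (combined with Theorem~\ref{t:ldpm=n}) yields
\[
N_i I_{y_i}(\delta_i)=t_i I(\tilde\delta_i)+\beta_{\tilde\delta_i}\tfrac{c^2}{t_i}+O\!\left(\tfrac{c^3}{t_i^2}+g(n)\right),
\]
with $t_1=t,\,t_2=n-t$, and the constraint becoming $t\tilde\delta_1+(n-t)\tilde\delta_2=n\delta+O(1)$. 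Strict convexity of $I$ combined with Jensen's inequality then forces $tI(\tilde\delta_1)+(n-t)I(\tilde\delta_2)\ge nI(\delta)-O(1)$, with equality (to leading order) only at $\tilde\delta_1=\tilde\delta_2=\delta$. Using the elementary inequality $1/t+1/(n-t)\ge 4/n$, positivity of $\beta_\delta$ on $\delta>0$ (reflecting that the skewed rectangle is a genuinely harder LD target, as encoded in Proposition~\ref{curvature}), and $c^2/n=h(n)^2/4$, the combined exponent satisfies
\[
N_1 I_{y_1}(\delta_1)+N_2 I_{y_2}(\delta_2)\ge nI(\delta)+\Omega(h(n)^2)+O(g(n)).
\]

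Summing this bound over the $O(n)$ grid terms adds only $O(\log n)$ to the exponent, giving a numerator bounded by $\exp(-nI(\delta)-\Omega(h(n)^2)+O(g(n)+\log n))$. Since Theorem~\ref{t:ldpm=n} identifies the denominator as $\exp(-nI(\delta)-\log n+O(1))$, the ratio is at most $\exp(-\Omega(h(n)^2)+O(g(n))+O(\log n))$. Because $h(n)^2=(\log n)^{2C\log\log n}=\exp(2C(\log\log n)^2)$ dominates both $\log n=\exp(\log\log n)$ and $g(n)=\exp(c(\log\log n)^2)$ for $C>c/2$, this ratio is $o(n^{-K})$ for every $K$, in particular $o(n^{-2})$, as desired. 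The principal obstacle we anticipate is ensuring uniformity in $\delta_1,\delta_2$ of the $O(g(n))$ errors in Theorem~\ref{t:ldpgen} and Proposition~\ref{curvature} across the compact range surviving tail truncation, together with a clean verification that $\beta_\delta>0$ on $(0,\infty)$; the former follows from the uniformity already present in the rigidity input Theorem~\ref{t:rigidity}, while the latter should be amenable to direct analysis of the explicit integral formula for $\beta_\delta$ stated in Proposition~\ref{curvature}.
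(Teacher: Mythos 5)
Your argument is essentially the paper's own proof in the central regime where both $t$ and $n-t$ are of order $n$ (the paper carries this out at $t=n/2$ with the same ingredients: grid decomposition, Theorem~\ref{t:ldpgen}, the quadratic correction of Proposition~\ref{curvature}, convexity of $I$, and the sharp denominator from Theorem~\ref{t:ldpm=n}), and in that regime it is fine. The genuine gap is the regime of small $t$ (or $n-t$), which the statement requires down to $t=n^{1/2}h(n)/2$, and there two of your steps break. First, your tail truncation discards the second factor: you bound the leftover by $\P(T_1\ge M_1L)$ for a \emph{fixed} $L$, but for $t\ll n$ this probability is of order $e^{-\Theta(tL)}$ with $tL\ll nI(\delta)$, so it is astronomically larger than the target $e^{-nI(\delta)-\Omega(h(n)^2)}$; no fixed $L$ works uniformly in $t$, and one must retain $\P\bigl(T_2\ge (4+\delta)n-M_1L\bigr)$ (or, as the paper does, control all large-$\delta_1$ terms via the uniform-in-$\delta$ bound of Proposition~\ref{t:diaguniform}). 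Second, your claim that $1-y_1=O(c/n)$ is false: $1-y_1=2c/(t+c)=\Theta(c/t)$, which is of order one when $t\asymp c=n^{1/2}h(n)/2$. In that range Theorem~\ref{t:ldpgen} (which needs $M\le 1.1N$) does not apply to the short segment, and the expansion in Proposition~\ref{curvature} is useless there since its error $O(c^3/t^2)$ swamps the claimed gain $\beta_\delta c^2/t$ unless $c=o(t)$; relatedly, for $t\ll n$ the surviving grid values of $\delta_1$ range up to order $\delta n/t\to\infty$, outside the fixed compact window in which both Theorem~\ref{t:ldpgen} and Proposition~\ref{curvature} are stated, and there the strict convexity of $I$ degenerates ($I''\to 0$), so the "equality only at $\tilde\delta_1=\tilde\delta_2=\delta$" step is not quantitative without further input.

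The paper closes exactly these holes with three tools you do not use: Proposition~\ref{t:diaguniform} (an upper-tail bound with error $O(4+\delta_1+\delta_2)$, uniform over all $\delta$), Lemma~\ref{strongconvexity} (a convexity gain that does not degenerate as $\delta_1\to\infty$), combined in Lemma~\ref{est98} to confine the relevant $(\delta_1,\delta_2)$ to a narrow window around $\delta$; and Theorem~\ref{t:dom}, which transfers these bounds from the on-diagonal point $v_t=(t/2,t/2)$ to the skewed point $v_0$. Only after this reduction does it invoke Theorem~\ref{t:ldpgen} and Proposition~\ref{curvature}, and the $\Theta(h(n)^2)$ (indeed the stated $\Theta(h(n))$) gain is harvested from the long segment, whose skew $c$ is genuinely small relative to its size, rather than from both segments as in your $1/t+1/(n-t)\ge 4/n$ step. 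To repair your proposal you would need to incorporate this uniform-in-$\delta$/monotonicity machinery for the short segment; the remainder of your outline (mesh of size $O(1)$ with $O(n)$ terms, the $O(g(n))$ uniformity, the comparison $h(n)^2\gg g(n)+\log n$, and positivity of $\beta_\delta$, which the paper already proves in Theorem~\ref{t:ratecompare}) is sound.
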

\begin{figure}[h]
\centering
\includegraphics[scale=.5]{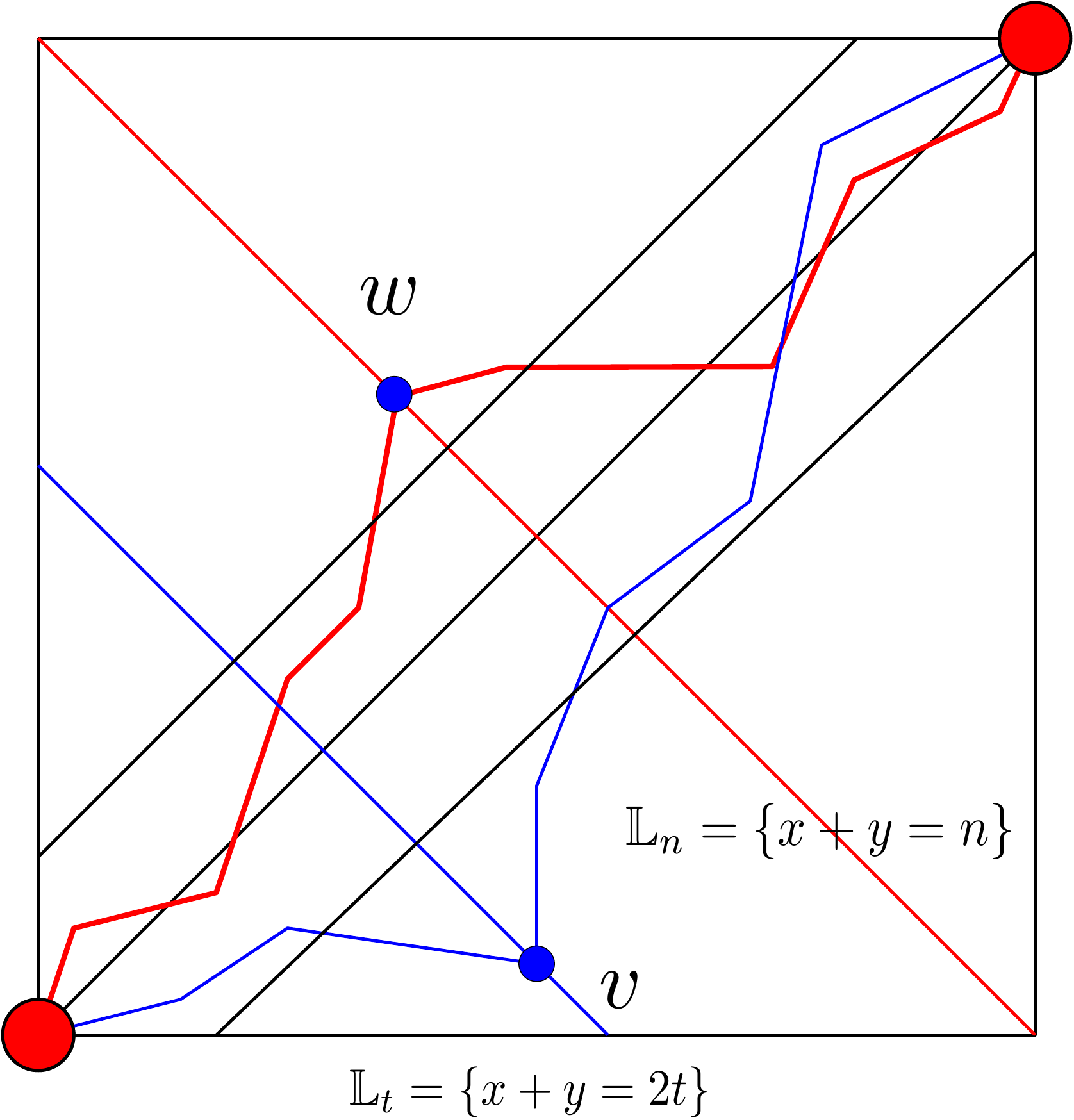}
\caption{ $v$ and $w$ are two point in $\mathcal{R}_n(2t)$ and $\mathcal{R}_n(n)$  respectively. The blue and the red paths denote the best paths passing through $v$ and $w$. Proposition \ref{p:line} shows that, conditioned on the upper tail event, no such path is likely to be the global polymer.
}
\end{figure}

We comment at this point that the reduction above is not a necessity, we shall prove upper bounds on $\P(T_{\mathbf{1},v}+T'_{v,\mathbf{n}} \geq (4+\delta)n)$ that gets progressively worse as $v$ moves away from the diagonal on $\mathbb{L}_{t}$; however for a technical convenience it will be easier to restrict our argument for $v=(v_1,v_2)$  where $|v_1-v_2|=o(t\wedge (n-t))$. 
Observe now that, using the sharp non-asymptotic lower bound for $\P(T_{n}\geq (4+\delta)n)$ obtained in Theorem \ref{t:ldpm=n}, we shall need the following upper bound of the numerator of \eqref{e:line2}. 

\begin{proposition}
\label{p:line3}
In the setting of Proposition \ref{p:line2}, we have 
$$\log \P(T_{\mathbf{1},v_0}+T'_{v_{0},\mathbf{n}} \geq (4+\delta)n) \leq -n I (\delta)-\Theta(h(n)).$$
\end{proposition}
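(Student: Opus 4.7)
By Proposition~\ref{p:eigen} and the independence of $T_1 := T_{\mathbf{1},v_0}$ and $T_2 := T'_{v_0,\mathbf{n}}$, each passage time is distributed as the largest eigenvalue of a complex Wishart matrix: $T_1$ corresponds to dimensions $(t+c)\times(t-c)$ and $T_2$ to $(n-t+c)\times(n-t-c)$, where $c := n^{1/2}h(n)/2$. The plan is to combine the sharp non-asymptotic estimates already at our disposal: Theorem~\ref{t:ldpm=n} gives the square-case rate $-\ell I(\cdot)$, while Proposition~\ref{curvature} captures the quadratic penalty $\beta_\delta c^2/\ell$ incurred by the off-diagonal nature of an $\ell$-size Wishart rectangle with offset $c$. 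The saving demanded by the proposition will come precisely from this quadratic correction, applied on both halves of the path.

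The first step is to discretize the convolution at unit scale: for each integer $j$,
\begin{equation*}
\P(T_1 + T_2 \geq (4+\delta)n) \leq \sum_{j} \P\bigl(T_1 \geq (4+\delta)t + j\bigr)\cdot \P\bigl(T_2 \geq (4+\delta)(n-t) - j - 1\bigr).
\end{equation*}
In the interior regime where $c/t$ and $c/(n-t)$ are both $o(1)$, combining Proposition~\ref{curvature} with Theorem~\ref{t:ldpm=n} bounds each factor sharply:
\begin{equation*}
\log \P\bigl(T_1 \geq (4+\delta_1)t\bigr) \leq -t\, I(\delta_1) - \beta_{\delta_1}\tfrac{c^2}{t} + O(g(n)+\log n),
\end{equation*}
and analogously for $T_2$ with $\delta_2$. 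Parametrizing $\delta_1 = \delta + j/t$ and $\delta_2 = \delta - (j+1)/(n-t)$, a second-order Taylor expansion of $I$ about $\delta$ makes the linear-in-$j$ contributions cancel between the two factors (modulo an $O(1)$ constant), leaving the quadratic $\tfrac12 I''(\delta) j^2 \cdot \tfrac{n}{t(n-t)}$. The resulting Gaussian sum in $j$ (of variance $\sigma^2 \sim t(n-t)/n$) contributes only an $O(\sqrt{n})$ prefactor, absorbed into the $\log n$ term, yielding
\begin{equation*}
\log \P(T_1+T_2 \geq (4+\delta)n) \leq -n\, I(\delta) - \beta_{\delta}\cdot \frac{c^2 n}{t(n-t)} + O(g(n)+\log n).
\end{equation*}
Since $c^2 n/(t(n-t)) \geq 4c^2/n = h(n)^2$ throughout the relevant range of $t$, and $h(n)^2 \gg g(n) + \log n$ by the choice $2C > c$ in the definitions of $h$ and $g$, this yields a saving of $\Theta(h(n)^2)$, comfortably larger than the demanded $\Theta(h(n))$.

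The main technical obstacle will be the boundary regime where $t$ (or symmetrically $n-t$) is comparable to $c$, so that Proposition~\ref{curvature} is inapplicable to one of the two factors (say $T_1$). There I would instead invoke the monotonicity Theorem~\ref{t:dom} to stochastically dominate $T_1 \preceq T_{t,t}$ and apply only Theorem~\ref{t:ldpm=n}, losing the $-\beta c^2/t$ correction from the $T_1$ side but retaining the saving $-\beta c^2/(n-t) = -\Theta(h(n)^2)$ from the $T_2$ side, which by itself exceeds the required $h(n)$. Tail contributions from $|j|$ outside the validity range of Proposition~\ref{curvature} (so that $\delta_1$ becomes very large or $\delta_2$ falls below $\delta_0$) will be handled using the uniform sub-exponential bound of Proposition~\ref{t:diaguniform} and will be negligible compared to the main term.
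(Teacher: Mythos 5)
Your proposal follows essentially the same route as the paper's proof: discretize $T_{\mathbf{1},v_0}+T'_{v_0,\mathbf{n}}$ into a sum over excess allocations, bound the bulk terms via Theorem \ref{t:ldpm=n}/Theorem \ref{t:ldpgen} combined with the quadratic off-diagonal penalty of Proposition \ref{curvature} (yielding the $\Theta(h^2(n))$ saving, which dominates $g(n)+\log n$), and dispose of the extreme allocations using convexity of $I$, the crude uniform bound of Proposition \ref{t:diaguniform}, and the monotonicity Theorem \ref{t:dom} --- precisely the paper's $j_*$-truncation together with Lemma \ref{strongconvexity} and Lemma \ref{est98}. The remaining differences (unit mesh instead of $\e_n=1/n^2$, an explicit Taylor expansion in place of the midpoint-convexity bound, and your square-domination shortcut when $t$ is comparable to the offset) are cosmetic rather than a genuinely different argument.
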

We first finish the proof of Proposition \ref{p:line2} using the above. 

\begin{proof}[Proof of Proposition \ref{p:line2}] Follows by plugging in the conclusion of Theorem \ref{t:ldpm=n} and the above upper bound and noticing that $e^{-h(n)}$ decays to zero super-polynomially in $n.$
\end{proof}

{We now prove Proposition \ref{p:line3}}.
\begin{proof}[Proof of Proposition \ref{p:line3}]
Towards the proof of Proposition \ref{p:line3}, let us set $\e_{n}:=\frac{1}{n^2}$ and for $j\in \Z$ let $\cg_{j}$ (resp.\ $\cg'_{j}$) denote the event that $T_{\mathbf{1},v_0}\ge (4+\delta)t+j \e_n$ (resp.\ $T'_{v_{0},\mathbf{n}}\ge (4+\delta)(n-t)+j'\e_n$).
 Clearly, 
\begin{equation}\label{mesh}
 \P(T_{\mathbf{1},v_0}+T'_{v_{0},\mathbf{n}} \geq (4+\delta)n) \leq \sum_{j+j'\geq -1} \P(\cg_{j})\P(\cg'_{j'}).
 \end{equation}

Moreover let $j_{\max}$ be such that $j_{\max}\e_n=(4+\delta)n $ and $j_{\min}=-j_{\max}.$
We now see that we can restrict the above sum to $j_{\min}\le j,j'\le j_{\max}.$ Thus there are $O(n^3)$ terms in the sum.  Clearly, there is no loss of generality is assuming $t\leq \frac{n}{2}$. We first prove Proposition \ref{p:line3} in the special case of $t=\frac{n}{2}.$ We will then need some extra estimates to extend the same proof for general $t$.

Observe that by the strict convexity of the rate function $I(\cdot)$ we know that there exists $\varepsilon=\e(\delta)>0$ such that $I(2\delta-\varepsilon)>2I(\delta)$. Let $j_*$ be such that $(j_*-1)\e_{n}= (\delta-\e)n/2$. It follows that if $\max(j,j')\geq j_*$ then 
\begin{equation}\label{convex1}
\log \P(\cg_{j})\P(\cg'_{j'}) \le  -n(I(\delta)+c),
\end{equation}
for some $c>0$ and hence those terms contribute at most $e^{-n(I(\delta)+c)}O(n^3)$ to the sum in \eqref{mesh}. Thus these can be ignored for the purpose of the proof of Proposition \ref{p:line3}. So from now on we shall restrict ourselves to the case where $\max(j,j')\leq j_{*}$.
Let us also introduce the following notations: 
$$ N_1:=\frac{n}{2}-n^{1/2}h(n); \qquad M_1:=\frac{n}{2}+n^{1/2}h(n); \qquad y:= \frac{N_1}{M_1}.$$
Fix $(j,j')$ such that $j+j'\geq -1$ and $\max(j,j')\leq j_{*}$. Note that, by our choice of $j_{\max}$ and the constraint on $(j,j')$ it follows that $\delta + (\min(j,j'))\frac{2\e_{n}}{n}> \e$ and $\delta + (\max(j,j'))\frac{2\e_{n}}{n}\le 2\delta$ and we can apply Theorem \ref{t:ldpgen} to conclude that   
$$\log  \P(\cg_{j}) = -N_1 I_{y}(\delta + j\frac{2\e_{n}}{n})+ O (g(n));$$
$$\log  \P(\cg'_{j'}) = -N_1 I_{y}(\delta + j'\frac{2\e_{n}}{n})+ O (g(n)).$$

Again, to reduce notational overhead let us set $\delta_{j}:= \delta + j\frac{2\e_{n}}{n}$.  Now by Proposition \ref{curvature} $$ N_1 I_{y}(\delta_{j}) \geq \frac{n}{2} I(\delta_{j}) + \Theta(h^2(n)).$$
 This implies,
$$\log \P(\cg_{j})\P(\cg'_{j'}) \leq -n \biggl ( \frac{I(\delta_j)+ I(\delta_{j'})}{2} \biggr)- \Theta(h^2(n)) + O (g(n)).$$

By convexity and monotonicity of $I(\cdot)$, and our choice of $(j,j')$

$$ \frac{I(\delta_j)+ I(\delta_{j'})}{2} \geq I(\delta_{-1}) \geq I(\delta) -c \frac{\e_{n}}{n}$$ 
for some $c$ bounded away from $\infty$ (as $I(\cdot)$ is continuously differentiable with derivative bounded by $2$). Since there are only $O(n^3)$ terms in \eqref{mesh} this completes the proof of Proposition \ref{p:line3} for the case $t=n/2$.

Note that in the above, $t=n/2$ was used implicitly to restrict the values of $j, j'$ to be less than $j_*.$
However when $t$ is much smaller, $j$ could potentially be much bigger.  This calls for some extra estimates that we present next. First of all recall that as  pointed out after \eqref{e:i}, the rate function $I_{\delta}$ is not quite strongly convex and the hessian decays to zero as $\delta$ approaches infinity. We record the following useful lemma. 

\begin{lem}\label{strongconvexity} Fix any $\delta>0$.  Then there exists $C=C_{\delta}>0$ such that uniformly for any $\alpha\in [0,1/2]$ and $\delta_1$ and $\delta_2$ such that $\alpha \delta_1+(1-\alpha)\delta_2=\delta$

$$ \alpha_1I(\delta_1)+(1-\alpha)I(\delta_2)-I({\delta})\ge C_{\delta}\left[(1-\alpha)(\delta_2-\delta)^2+\alpha [\min((\delta_1-\delta)^2, |\delta_1-\delta |)\right]$$
\end{lem}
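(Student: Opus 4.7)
The plan is to introduce the one-variable function
\[
g(\delta_1) := \alpha I(\delta_1) + (1-\alpha)I(\delta_2) - I(\delta),
\]
where $\delta_2 = (\delta - \alpha\delta_1)/(1-\alpha)$ is forced by the barycentric constraint, and then produce two matching lower bounds on $g$ by different ``one-term'' splits of $g''$. A direct calculation gives $g(\delta) = 0$, $g'(\delta) = 0$, and
\[
g''(\delta_1) \;=\; \alpha I''(\delta_1) \;+\; \frac{\alpha^2}{1-\alpha}\, I''(\delta_2),
\]
where both summands are non-negative by convexity of $I$. Retaining only the first summand and integrating twice from $\delta$ yields $g(\delta_1) \geq \alpha[I(\delta_1) - I(\delta) - I'(\delta)(\delta_1-\delta)]$; the symmetric computation treating $\delta_2$ as the free variable (so that the second derivative is $(1-\alpha)I''(\delta_2) + \tfrac{(1-\alpha)^2}{\alpha}I''(\delta_1)$) gives $g(\delta_1) \geq (1-\alpha)[I(\delta_2) - I(\delta) - I'(\delta)(\delta_2 - \delta)]$. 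Averaging the two reduces the lemma to the two pointwise statements
\begin{align*}
I(\delta_1) - I(\delta) - I'(\delta)(\delta_1-\delta) &\;\geq\; C_\delta\,\min\!\bigl((\delta_1-\delta)^2,\, |\delta_1-\delta|\bigr),\\
I(\delta_2) - I(\delta) - I'(\delta)(\delta_2-\delta) &\;\geq\; C_\delta\,(\delta_2-\delta)^2.
\end{align*}

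For the $\delta_2$ estimate I would exploit the constraint: since $\alpha \leq 1/2$ and $\delta_1,\delta_2\geq 0$, one has $(1-\alpha)\delta_2 = \delta - \alpha\delta_1 \in [0,\delta]$, so $\delta_2 \in [0,2\delta]$ and $|\delta_2-\delta|\leq \delta$. On the compact interval $[0,2\delta]$ the function $I''$ is continuous and strictly positive (by strict convexity and smoothness of $I$ on the open interior of its domain), hence bounded below by some $c_\delta > 0$; a Taylor expansion with integral remainder then gives the required quadratic bound with $C_\delta = c_\delta/2$.

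For the $\delta_1$ estimate I would split at a constant $R = R_\delta$ to be chosen. On $|\delta_1 - \delta| \leq R$ the same local strong-convexity argument (using that $[0,\delta+R]$ is compact) gives the quadratic piece. The only unbounded regime is $\delta_1 > \delta + R$, since $\delta_1\geq 0$ confines the other direction to a compact set already handled. There I would use the tangent inequality at $\delta + R$, namely $I(\delta_1) \geq I(\delta+R) + I'(\delta+R)(\delta_1 - \delta - R)$, and differentiate \eqref{e:i} to obtain
\[
I'(s) \;=\; 1 \;-\; 2\int_0^4 \frac{1}{4+s-x}\, d\mathsf{MP}(x),
\]
which is strictly less than $1$ and strictly increasing in $s$. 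Consequently $I'(\delta + R) - I'(\delta)$ is a positive $\delta$-dependent constant, and the excess $I(\delta_1) - I(\delta) - I'(\delta)(\delta_1 - \delta)$ grows at least linearly in $\delta_1 - \delta$ on this range, with slope bounded away from zero uniformly in $\delta_1$. After adjusting constants this produces the $|\delta_1 - \delta|$ bound.

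The main subtlety lies exactly in the passage between the two regimes. The rate function $I$ is only \emph{asymptotically} Lipschitz of slope $1$ (so $I''(\delta)\to 0$ and no uniform quadratic bound is available), and this is why the $\min$ in the statement is genuinely necessary: for $\delta_1 \gg \delta$ the best one can hope for is linear growth in $|\delta_1-\delta|$ controlled by the slope-gap $I'(\delta+R) - I'(\delta)$. The argument above turns this limitation into a feature by (a) using strict convexity of $I$ to produce a positive slope-gap on any finite range, and (b) observing that on the $\delta_2$ side the constraint $\alpha \leq 1/2$ together with $\delta_1,\delta_2 \geq 0$ automatically confines $\delta_2$ to the bounded window $[0,2\delta]$, where strong convexity gives a clean quadratic bound without any ``linear tail''.
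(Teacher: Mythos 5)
Your proposal is correct and follows essentially the same route as the paper: reduce to the two tangent-line excesses at $\delta$ via the barycentric constraint, get a quadratic bound for $\delta_2$ because $\alpha\le 1/2$ and $\delta_1\ge 0$ force $\delta_2\in[0,2\delta]$ where $I''$ is bounded below (the paper uses that $I''$ is decreasing, taking $C_\delta=I''(2\delta)$), and handle large $\delta_1$ by the slope gap $I'(\delta+R)-I'(\delta)>0$ (the paper takes $R=\delta/2$), which yields the linear piece of the $\min$. The only cosmetic slip is the claim that $I''$ is continuous on the closed interval $[0,2\delta]$ (it diverges at $0$), but since it is positive and decreasing there the lower bound you need still holds.
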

\begin{proof} 
For the moment let us assume $\delta_1<\delta.$  which implies $\delta_2\le 2\delta.$ Then by Taylor expansion: $$I(\delta_1)\ge I(\delta)+(\delta_1-\delta)I'(\delta)+I''(\delta)(\delta_1-\delta)^2$$
Above we use that in fact $I''(\delta)$ is decreasing in $\delta.$ Now for $\delta_2,$ similarly we have $$I(\delta_2)\ge I(\delta)+(\delta_2-\delta)I'(\delta)+I''(\delta_2)(\delta_1-\delta)^2$$

Since $\delta_2<2\delta,$  we can take $C_{\delta}=I''(2\delta)$ and we are done.
However, when $\delta_1>\delta$, the situation is slightly more complicated since $\alpha$ can be really close to zero pushing $\delta_1$ towards infinity where the hessian becomes almost zero.  
If $\delta_1<2\delta$ then the above argument works. When $\delta_1>2\delta$ we use the following bound instead (using $I'(\delta)$ is increasing in $\delta,$)
\begin{align*}
I(\delta_1)&\ge I(\delta)+I'(\delta)(\frac{3\delta}{2}-\delta)+ I'(\frac{3\delta}{2})(\delta_1-\frac{3\delta}{2}), \text{ which by re-arranging}\\
&= I(\delta)+I'(\delta)(\delta_1-\delta)+ [I'(\frac{3\delta}{2})-I'(\delta)](\delta_1-\frac{3\delta}{2}).
\end{align*}
The above and the fact that by hypothesis $\delta_1-\frac{3\delta}{2}=\Theta (\delta_1-\delta),$
completes the proof in the case $\delta_1\ge 2\delta.$
\end{proof}

Let $v_t=(t/2,t/2)$ and let $L_1$ and $L_2$ be the polymer weights from $\bf 1$ and $\bf n$ respectively  to $v_t$. 
The next lemma uses the above lemma to show that for any $t_1=t$, the excess polymer weight is more or less 
distributed proportionally  between the polymer to the line $\{x
+y=2t_1\}$ and beyond.  Let $n-t_1=t_2$ and $L_1$ and $L_2$ denote $T_{\mathbf{1},v_t}$ and $T_{v_t,\mathbf{n}}$ respectively.
\begin{lem}\label{est98} Fix $\delta>0.$ Uniformly for any $\delta_1,\delta_2>0$ which satisfy $t_1\delta_1+t_2 \delta_2\ge n\delta,$ 
$$\P(L_1\ge (4+\delta_1)t_1)\P(L_2\ge (4+\delta_2)t_2)\le e^{-\left[I(\delta)n+ n\Theta\bigl( \frac{t_1}{t_1+t_2}(\min((\delta_1-\delta)^2,(\delta_1-\delta)))+ \frac{t_2}{t_1+t_2} (\delta_2-\delta)^2\bigr)\right]+O(4+\delta_1+\delta_2)}.$$
\end{lem}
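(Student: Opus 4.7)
The plan is to reduce the product of probabilities to a deterministic convex-analytic inequality for the rate function $I$, and then to exploit the non-uniform strong convexity of $I$ captured in Lemma \ref{strongconvexity}.

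First I would apply the uniform non-asymptotic upper bound of Proposition \ref{t:diaguniform} (via the correspondence of Proposition \ref{p:eigen}) to each of the two independent passage times $L_1, L_2$. Since both are last passage times between diagonal points on squares whose sides are proportional to $t_1$ and $t_2$, this gives
$$
\log \P(L_i\ge (4+\delta_i)t_i) \le -t_i I(\delta_i) + O(4+\delta_i)\quad(i=1,2).
$$
Adding these, it suffices to establish the purely deterministic inequality
$$
t_1 I(\delta_1) + t_2 I(\delta_2) \ge n I(\delta) + n\,\Theta\!\left(\tfrac{t_1}{n}\min((\delta_1-\delta)^2,\,|\delta_1-\delta|) + \tfrac{t_2}{n}(\delta_2-\delta)^2\right)
$$
under the hypothesis $t_1\delta_1+t_2\delta_2\ge n\delta$. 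Since the conclusion is asymmetric in the two coordinates, the natural target regime is $t_1\le t_2$, i.e.\ $\alpha:=t_1/n\le 1/2$; the other case follows by relabelling.

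A clean way to prove this analytic inequality is by Taylor expansion of $I$ about $\delta$: write
$$
I(\delta_i) = I(\delta) + I'(\delta)(\delta_i-\delta) + R_i,\qquad R_i\ge 0,
$$
by convexity of $I$. Taking the $t_i$-weighted sum, the linear contribution $I'(\delta)[t_1\delta_1+t_2\delta_2-n\delta]$ is non-negative by the hypothesis, and the task reduces to the pointwise lower bounds
$$
R_i \ge c_\delta\,\min\bigl((\delta_i-\delta)^2,\,|\delta_i-\delta|\bigr),
$$
which are exactly the inequalities proved inside Lemma \ref{strongconvexity}: in the near-diagonal window $|\delta_i-\delta|\le \delta/2$, the Hessian $I''$ is bounded below by $I''(3\delta/2)>0$, yielding a pure quadratic; outside this window one invokes monotonicity of $I'$ and the fact that $I'(3\delta/2)-I'(\delta)\ge c_\delta>0$ to recover the linear lower bound. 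Equivalently, one can apply Lemma \ref{strongconvexity} with pivot $\delta':=\alpha\delta_1+(1-\alpha)\delta_2\ge\delta$ and then pass from pivot $\delta'$ back to pivot $\delta$ via $I(\delta')-I(\delta)\ge I'(\delta)(\delta'-\delta)\ge 0$, expanding $\delta_i-\delta=(\delta_i-\delta')+(\delta'-\delta)$ and using $(a+b)^2\le 2a^2+2b^2$ to absorb the slack into the linear-in-$(\delta'-\delta)$ contribution.

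The main obstacle, which the asymmetric form of the conclusion reflects, is that the Hessian $I''(\delta)$ degenerates as $\delta\to\infty$, as noted right after \eqref{e:i}. This is precisely why the bound on $R_1$ only retains $\min((\delta_1-\delta)^2,|\delta_1-\delta|)$ rather than a pure quadratic: on the minority-weight side the value $\delta_1$ is essentially unconstrained and can push into the regime where $I''$ is vanishingly small, so only a linear-in-$|\delta_1-\delta|$ estimate survives there. On the majority side, the $\alpha\le 1/2$ condition keeps $\delta_2$ bounded relative to $\delta$ in the tight-constraint situation, which is the extremal case; this is what allows the full quadratic $(\delta_2-\delta)^2$ to appear in the final statement.
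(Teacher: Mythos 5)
Your proposal is correct and follows essentially the same route as the paper: the paper likewise multiplies the two bounds from Proposition \ref{t:diaguniform} and then invokes the convexity estimate of Lemma \ref{strongconvexity} with $\alpha=t_1/(t_1+t_2)$, which is exactly the Taylor-expansion-around-$\delta$ argument (quadratic remainder near $\delta$, linear remainder where $I''$ degenerates) that you reproduce. Your variant of expanding directly at $\delta$ and discarding the nonnegative linear term $I'(\delta)[t_1\delta_1+t_2\delta_2-n\delta]$ is only a minor repackaging of that lemma, and it handles the inequality (rather than equality) constraint at least as cleanly as the paper does.
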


\begin{proof}As a straightforward consequence of Proposition \ref{t:diaguniform} we get 
\begin{equation}\label{prod12}
\P(L_1\ge (4+\delta_1)t_1)\P(L_2\ge (4+\delta_2)t_2)\le e^{-t_1I(\delta_1)-t_2I(\delta_2)+O(4+\delta_1+\delta_2)}.
\end{equation}  
Now notice that $t_1\delta_1+t_2 \delta_2\ge (t_1+t_2)\delta.$ So by Lemma \ref{strongconvexity}, 
$$
\frac{t_1}{t_1+t_2}I(\delta_1)+\frac{t_2}{t_1+t_2}I(\delta_2)- I(\delta)\ge \Theta\left( \frac{t_1}{t_1+t_2}\min((\delta_1-\delta)^2,(\delta_1-\delta))+ \frac{t_2}{t_1+t_2} (\delta_2-\delta)^2\right).
$$ 
\end{proof}
We will now consider the above estimates in the context of the sum in \eqref{mesh} by taking $\delta_1$ and $\delta_2$ to be such that 
\begin{align*}
t_1\delta_1&=t_1 \delta+j \e_n,\\
t_2\delta_1&=t_2 \delta+j' \e_n.
\end{align*}
Since $t_2\ge \frac{n}{2},$ it follows that the above choices of $\delta_1$ and $\delta_2$ can contribute to the sum in \eqref{mesh} only when 
(considering the sum in \eqref{mesh} has only $O(n^3)$ many terms),
 \begin{equation}\label{regime12}
 (\delta_2-\delta)^2\le O(\frac{\log n}{n}) \text{ and } t_1(\delta_1-\delta)^2\le O(\log n).
\end{equation} 
 
  Now recall from Proposition \ref{p:line} that $t_1\ge \sqrt n$ and hence $(\delta_1-\delta)^2\le \frac{O(\log (n))}{n}.$ 
However even though so far we have focussed only on $v_{t}=(t/2,t/2)$  to finish the proof of Proposition \ref{p:line3}, we have to bound the expression 
$$\P(T_{\mathbf{1},v_0}\ge (4+\delta_1)t_1)\P(T_{v_{0},\mathbf{n}} \ge (4+\delta_2)t_2)$$  where $v_0=v_0(t):=(t+\frac{n^{1/2}h(n)}{2}, t-\frac{n^{1/2}h(n)}{2}).$
Just by the monotonicity result in Theorem \ref{t:dom}, the same bounds as in Lemma \ref{est98} and hence the conclusions in \eqref{regime12} continue to hold. 
Now  since both $\delta_1$ and $\delta_2$ are bounded away from zero and infinity, by Theorem \ref{t:ldpgen}, and Proposition \ref{curvature} we have the following conclusion from \eqref{prod12}. 

$$\P(T_{\mathbf{1},v_0}\ge (4+\delta_1)t_1)\P(T_{v_{0},\mathbf{n}} \ge (4+\delta_2)t_2)\le e^{-I(\delta)n-\Theta(t_1(
\delta_1-\delta)^2+t_2(\delta_2-\delta)^2)-\Theta(h(n))}.$$

The above estimate along with \eqref{mesh},  completes the proof of Proposition \ref{p:line3}.
\end{proof}

\section{Proof of Theorem \ref{t:mainl}: Transversal Fluctuation Lower Bound}\label{ptl}
The proof will rely on the sharp on-diagonal large deviation in Theorem \ref{t:ldpm=n} and the monotonicity result Theorem \ref{t:dom} which will allow us to bound the off-diagonal terms by the on diagonal term. 
As outlined before, the main work in proving Theorem \ref{t:mainl} goes into proving the following proposition.

\begin{proposition}
\label{p:midpoint}
Fix $\delta>0$. There exists a constant $C=C(\delta)>0$ such that we have for all $n$ sufficiently large 
$$\P(T_{\mathbf{1},v_*}+T'_{v_{*},\mathbf{n}})\geq (4+\delta)n\mid \cU_{\delta}(n)) \leq \frac{C}{\sqrt{n}}$$
where $v_*=(\frac{n}{2}, \frac{n}{2})$.  
\end{proposition}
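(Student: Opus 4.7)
The plan is to estimate the numerator and denominator of the ratio
$\P(T_{\mathbf{1},v_*} + T'_{v_*,\mathbf{n}} \geq (4+\delta)n)/\P(\cU_\delta(n))$
separately with matching precision, capturing both the leading $e^{-nI(\delta)}$ exponential and the correct polynomial prefactor. The choice $v_*=(n/2,n/2)$ is critical: by Proposition \ref{p:eigen}, both halves $T_1 := T_{\mathbf{1}, v_*}$ and $T_2 := T'_{v_*, \mathbf{n}}$ are distributed as the scaled largest eigenvalue of a \emph{square} $(n/2)\times(n/2)$ Wishart matrix, so the sharp square-case estimate Theorem \ref{t:ldpm=n} (with its $-\log N + O(1)$ correction) applies to both. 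In particular, the denominator is bounded below by $c\, n^{-1} e^{-nI(\delta)}$ for some $c = c(\delta)>0$.

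For the numerator I set $B = (4+\delta)(n/2)$ and $F_j := \P(T_1 \geq B + j)$, $F'_k := \P(T_2 \geq B + k)$ for $j,k\in\Z$. Decomposing according to which unit interval contains $T_1 - B$, and using independence of $T_1,T_2$, yields the convolution-type bound
$$\P(T_1 + T_2 \geq 2B) \;\leq\; \sum_{j \in \Z} F_j \, F'_{-j-1},$$
since $T_1 \in [B+j, B+j+1)$ combined with $T_1 + T_2 \geq 2B$ forces $T_2 \geq B - j - 1$. In the central regime where $\delta_j := \delta + 2j/n$ and $\delta'_{-j-1} := \delta - 2j/n - 2/n$ both lie in a fixed compact subinterval of $(0,\infty)$, Theorem \ref{t:ldpm=n} bounds each term by $(C/n^2)\exp(-\tfrac{n}{2}[I(\delta_j) + I(\delta'_{-j-1})])$. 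Taylor expanding $I$ around $\delta$ using $\delta_j + \delta'_{-j-1} = 2\delta - 2/n$, the exponent becomes $-nI(\delta) + O(1) - 2I''(\delta)(j+\tfrac{1}{2})^2/n$ up to negligible cubic corrections, and the resulting Gaussian sum in $j$ is $\Theta(\sqrt{n})$. This yields $\P(T_1 + T_2 \geq 2B) \leq C' n^{-3/2} e^{-nI(\delta)}$, and dividing by the denominator bound gives the claimed $O(n^{-1/2})$.

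The tail regimes where $\delta_j$ or $\delta'_{-j-1}$ exits the compact subinterval require separate but routine handling: strict convexity of $I$ together with $I(0)=0$ ensures that the constrained minimum of $I(\delta_j) + I(\delta'_{-j-1})$ subject to $\delta_j + \delta'_{-j-1} \geq 2\delta - 2/n$ is attained near $\delta_j = \delta'_{-j-1} = \delta$, so any bounded-away configuration incurs an $\Omega(1)$ rate penalty. Combined with the uniform tail estimate of Proposition \ref{t:diaguniform} (and the trivial bound $F_j \leq 1$ for $\delta_j \leq 0$), the tail contribution is at most $e^{-nI(\delta)-\Omega(n)}$, negligible against $n^{-3/2}e^{-nI(\delta)}$. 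The main obstacle, and the reason Theorem \ref{t:ldpm=n} rather than the weaker Theorem \ref{t:ldpgen} is indispensable here, lies in the matching of polynomial prefactors: the denominator carries $n^{-1}$ and the numerator carries $n^{-2}\cdot\sqrt{n} = n^{-3/2}$, whose quotient is exactly the target $n^{-1/2}$. Any sub-polynomial error on either side would destroy this match, which is precisely why the analysis is performed at the on-diagonal point $v_*$—only there are both halves square Wishart matrices to which Theorem \ref{t:ldpm=n} directly applies—and the extension to other $v$ on the antidiagonal is then handled via the monotonicity result Theorem \ref{t:dom}.
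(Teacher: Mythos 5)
Your proposal is correct and follows essentially the same route as the paper's proof: a mesh/convolution decomposition of $\P(T_{\mathbf{1},v_*}+T'_{v_*,\mathbf{n}}\geq(4+\delta)n)$, the sharp square-Wishart estimate of Theorem \ref{t:ldpm=n} applied to both halves (yielding the $n^{-2}$ prefactor), a Gaussian-type sum of size $\Theta(\sqrt{n})$ from strong convexity of $I$ on compacts, tail terms discarded via strict convexity together with Proposition \ref{t:diaguniform}, and finally division by the $e^{-nI(\delta)-\log n+O(1)}$ lower bound on $\P(\cU_\delta(n))$. The only differences (unit mesh in the weight variable rather than increments of $1/n$ in $\delta$, and the explicit $(j+\tfrac12)^2$ bookkeeping) are cosmetic.
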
 
\begin{figure}[h]
\centering
\includegraphics[scale=.8]{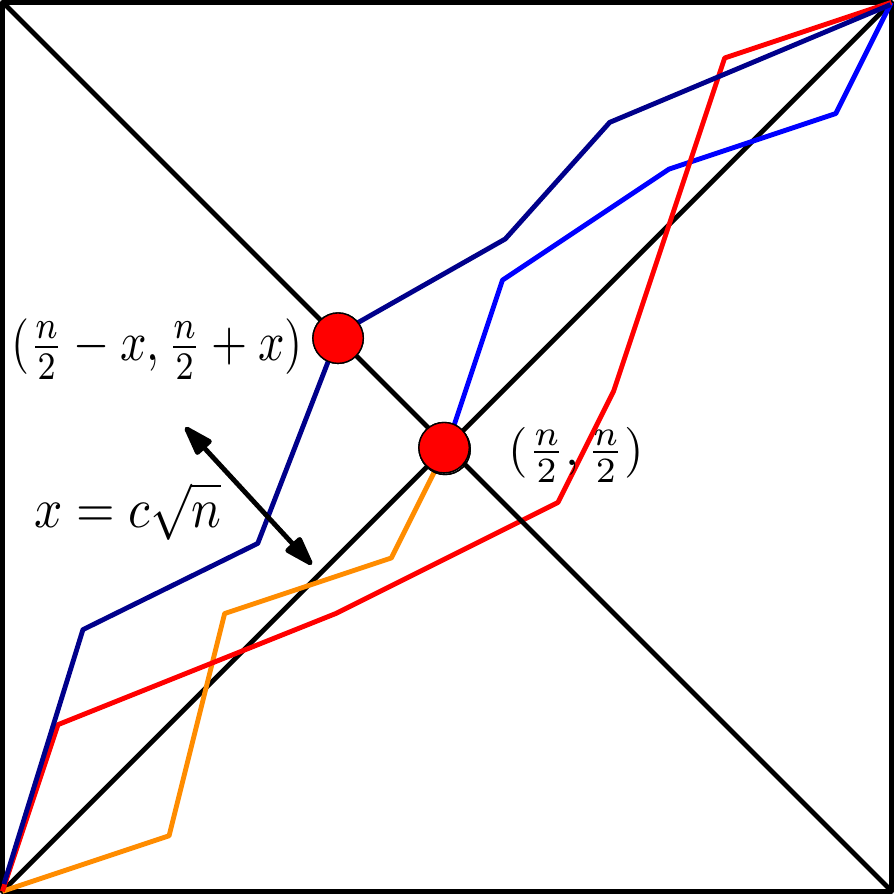}
\caption{Figure illustrating the proof of Theorem \ref{t:mainl}. The red path denotes the actual polymer conditioned on the upper tail event. We show that the best path passing through $(n/2,n/2)$ only has a chance of $O(\frac{1}{\sqrt{n}})$ of being the polymer. The same conclusion holds for the path passing through $(\frac{n}{2}-x,\frac{n}{2}+x)$ for any $x$ by the monotonicity result Theorem \ref{t:dom}.}
\end{figure}
Using Proposition \ref{p:midpoint}, it is now easy to prove Theorem \ref{t:mainl}. 

\begin{proof}[Proof of Theorem \ref{t:mainl}]
We shall show that $\P(D_{n}(n)\leq cn^{1/2}\mid \cU_{\delta}(n))\to 0$ as $c\to 0$ which clearly suffices. 
For $v\in \mathbb{L}(n)$, let $A_{v}$ denote the event that $\Gamma_{n}$ passes through $v$. Let $\mathbb{L}^*(c,n)$ denote the set of all $v=(n/2+x.n/2-x)\in \mathbb{L}(n)$ such that $|x|\leq cn^{1/2}$. Hence it suffices to show that 
\begin{equation}
\label{e:lbsum}
\sum_{v\in \mathbb{L}^*(c,n)} \P(A_{v}\mid \cU_{\delta}(n))\to 0
\end{equation}
as $c\to 0$ for all $n$ sufficiently large. It is clear that for each $v\in L(n)$ we have
$$\P(A_{v}\mid \cU_{\delta}(n))\leq \P(\ell(\Gamma_{n}(v))\geq (4+\delta)n\mid \cU_{\delta}(n)).$$

Now it follows that 
\begin{align*}\P(\ell(\Gamma_{n}(v))\geq (4+\delta)n\mid \cU_{\delta}(n))&\le \frac{\P(T_{\mathbf{1},v}+T'_{v,\mathbf{n}}\geq (4+\delta)n)}{\P(\cU_{\delta}(n))},\\
& \le \frac{\P(T_{\mathbf{1},v_*}+T'_{v_*,\mathbf{n}}\geq (4+\delta)n)}{\P(\cU_{\delta}(n))}.
\end{align*}
where the first inequality follows from Observation \ref{o:dom} and the second inequality is a straightforward consequence of Theorem \ref{t:dom}.
The proof is now complete as \eqref{e:lbsum} follows from the above and Proposition \ref{p:midpoint}.
\end{proof}

We now provide the details of the proof of Proposition \ref{p:midpoint}.
\begin{proof}[Proof of Proposition \ref{p:midpoint}]
We first write down a variant of the sum in \eqref{mesh}.
\begin{equation}\label{mesh1}
 \P(T_{\mathbf{1},v_*}+T'_{v_{*},\mathbf{n}} \geq (4+\delta)n) \leq 2\sum_{i\ge 0:\delta_{-i}\ge \e }  \P(T_{\mathbf{1},v_*} \geq (4+\delta_{-i})n) \P(T'_{v_{*},\mathbf{n}} \geq (4+\delta_i)n).
 \end{equation}
 where $\delta_{i}=\delta+\frac{i}{n}$ and  by the discussion preceeding \eqref{convex1}, the above sum can be restricted to the case neither $\delta_{-i}$ is at least $\e$ for some $\e=\e(\delta)>0$. 
Using Theorem \ref{t:ldpm=n} we obtain the following bound on the  RHS of \eqref{mesh1}.
\begin{align*}
\sum_{i}e^{-[\frac{I(\delta_i)+I(\delta_{i-1})}{2}]n-2\log n+O(1)}&=O(1)e^{-\log n}\sum_{i}e^{-I(\delta)n-
I''(\delta)\frac{i^2}{n}-\log n}\\
&=O(1)e^{-\log n}\frac{e^{-I(\delta)n}}{\sqrt n}.
\end{align*} 
 Thus we are done by observing that the above along with Theorem \ref{t:ldpm=n} implies that, 
 \begin{align*}
\frac{\P(T_{\mathbf{1},v_*}+T'_{v_{*},\mathbf{n}} \geq (4+\delta)n)}{\P(T_{\bf n}\ge (4+\delta)n)} \le \frac{O(1)}{\sqrt{n}}.
\end{align*} 
 
\end{proof}
\section{Concluding remarks and future directions}\label{fd}
We include a discussion of possible extensions of Theorem \ref{t:main} and outline some directions of future research. The transversal fluctuation exponent $1/2$ we have established for the polymer in the upper tail large deviation regime should be universal for a large class of two dimensional models of last passage percolation. However, unlike the lower tail case in \cite{BGS17A}, since our current method relies on the connections to random matrices, extending the result to general classes of LPP models remains an open problem. Nonetheless, for certain exactly solvable models of last two dimensional last passage percolation, one hopes that our method can be pushed through to obtain the same exponent. Observe that the main ingredient for the upper bound was to obtain the sharp asymptotics of the large deviation probability, which in turn used the representation of the last passage time as the position of the top particle in a certain determinantal process on $\R$ (eigenvalues of LUE) and the explicit joint density of the particles of the same. The key property used in such context was the rigidity property of eigenvalues which is true for a vast class of determinantal processes (see for e.g. \cite{gp}). 
Thus we expect our proof approach to work for other integrable models of last passage percolation with such connections including the Brownian Last Passage Percolation and Poissonian Last passage Percolation or the problem of the longest increasing subsequence of random permutations. We elaborate further on this point below. 

The most natural setting to extend our result is that of Brownian last passage percolation or the so called O'Connell-Yor polymer (see \cite{OY02, H16} for a precise definition). It is well-known \cite{CH14} that in Brownian LPP the passage time $L_{n,t}$ from {$(0,0)$ to $(t,n)$} is equal in distribution to $\sqrt{t}\lambda_1$ where $\lambda_1$ is the largest eigenvalue of an $n\times n$ GUE matrix whose eigenvalues $\lambda_1>\lambda_{2}>\cdots > \lambda_{n}$ has joint density proportional to $V(\underline{\lambda})^2 e^{-\frac{1}{2}\sum \lambda_{i}^2}$. It is also well-known \cite{ADG} that $\frac{L_{n,n}}{2n}\to 1$ a.s. and $-\frac{\log \P(L_{n,n}\geq (2+\delta)n)}{n}$ converges to a rate function $\tilde I(\delta),$  which is explicit. Following the same line of arguments as in the proof of Theorem \ref{t:ldpm=n} and using rigidity results for GUE eigenvalues \cite{GT16} one should be able to show that $\P(L_{n,n}> (2+\delta)n)= -nI(\delta)-\log n+ O(1)$. Satya Majumdar has informed us that results of a similar flavor have been derived in the  physics literature using different methods in \cite{borot,nadal}.  Observing that $L_{n,t}$ has the same distribution as $\sqrt{t}L_{n,1}$ for every $n\in \N$ and $t>0$, and following the same line of arguments as in this paper one should be able to prove results analogous to Theorems \ref{t:mainu} and \ref{t:mainl} in this setting as well. However we do not pursue working the details out precisely in this paper. 

For the model of Poissonian last passage percolation on $\R^2$ where one studies the length of the longest increasing path in a Poisson point process, using the RSK correspondence, one can show that the maximum number of points in an increasing path has the same distribution as the size of the top row of a Young Tableaux drawn from the Poissonized Plancherel measure \cite{romik}. The sizes of the rows of such a Young Tableaux forms a discrete determinantal process and one might try to carry our strategy of proof in this case also, invoking rigidity results in such context.

One natural question to ask is if there exists a non-trivial weak scaling limit of the polymer conditional on the upper tail large deviation event. More precisely, consider the following. Recall that for $t\in {0,1,\ldots, 2n}$, $(\frac{t}{2}+D_{n}(t), \frac{t}{2}-D_{n}(t))$ denotes the unique point on the polymer on the anti-diagonal  line $x+y=t$. Consider the process $D^*_{n}(s):= \frac{\sqrt{D_{n}(s2n)}}{\sqrt{n}}$ extended to $s\in [0,1]$ by linear interpolation. Does  $D^*_{n}$ conditional on $\{T_{n}\geq (4+\delta)n\}$ converge to a Brownian bridge?  We shall take up this question in a future project where we plan to use various representations for the evolution of the polymer weight profile to compute the correlation structure of the polymer weight across different times conditional on the large deviation event.
A preliminary step is to show that the one point distribution of say  $D^*_{n}(\frac{1}{2})$ is given by a Gaussian Random variable.
We believe that it might already be possible to show using our techniques that the limit, if exists is H\"{o}lder $1/2-$. 
We also wish to point out that even for the typical behavior of the polymer, existence of such a scaling limit was open for a long time, and was established only very recently in a breakthrough work \cite{DOV18} starting with the model of Brownian LPP. They also establish that the scaling limit was H\"{o}lder $2/3-$, as is expected given the typical transversal fluctuation scaling (see also \cite{HS18}). The scaling limit there is described as a functional of the so-called Airy Sheet. However the distributional properties are not very explicit and hence a better understanding of such scaling limits remain an important research area. 

\section{Appendix}
\label{calculations}
We finish with the details of the Proof of Proposition \ref{curvature}. We start with the following key estimate. 
\begin{theorem}
\label{t:intest}
Let $z=1-\sqrt{y}$, and recall \eqref{constraint1}. Let 
$$I:= \int (4+\hat{\delta}-x)~d\mathsf{MP}_{y}-\int (4+{\delta}-x)~d\mathsf{MP}.$$
Then $I=Az+Bz^2+o(z^2)$ as $z\to 0$ where 
$$A= -1+ \int \log (4+\delta-x)~d\mathsf{MP}-\log (4+\delta);$$
$$B=-{\frac12}-\frac{3}{2}\log (4+\delta)+ (\frac{1}{2}(2+\delta)+2) \int \frac{1}{4+\delta-x} d\mathsf{MP} +\int \log (4+\delta-x)~d\mathsf{MP}+ \int_0^{4} \frac{\log (4+\delta-x)}{2\pi \sqrt{x(4-x)}}~dx.$$  
\end{theorem}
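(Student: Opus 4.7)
The plan is to expand $F(z) := \int \log(4+\hat\delta(z) - x)\, d\mathsf{MP}_{(1-z)^2}(x)$ to second order in $z$, noting that $I = F(z) - F(0)$. First, from the constraint $(4+\delta)n = (4+\hat\delta)m_1$ combined with $c/(n+c) = z(2-z)/2$ (obtained from $\sqrt{y} = 1-z$ and $y = (n-c)/(n+c)$), one derives the exact identity $\hat\delta - \delta = -(4+\delta)\bigl(z - z^2/2\bigr)$, so $\hat\delta'(0) = -(4+\delta)$ and $\hat\delta''(0) = 4+\delta$.

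Next, parameterize the support of $\mathsf{MP}_y$ via $x = 1 + y + 2\sqrt{y}\cos\theta$, which simplifies algebraically to $x(z,\theta) = x_0(1-z) + z^2$ with $x_0 = 4\cos^2(\theta/2)$, giving $d\mathsf{MP}_y = \frac{2\sin^2\theta}{\pi\, x(z,\theta)}\,d\theta$ on $[0,\pi]$; this confines all $z$-dependence to the integrand on a fixed interval. For $\theta$ bounded away from $\pi$, Taylor-expand the two factors $\log(4+\hat\delta(z) - x(z,\theta))$ and $1/x(z,\theta)$ to second order in $z$, multiply, and integrate against $\frac{2\sin^2\theta}{\pi}\,d\theta$. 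Using the identities $\int d\mathsf{MP} = 1$ and $\int x\,d\mathsf{MP} = 1$, the order-$z$ term produces contributions $\int\log(4+\delta-x)\,d\mathsf{MP}$, a constant, and a $-\log(4+\delta)$ term arising from the interplay between the shift of $\hat\delta$ and the shift of the measure; combining these yields the claimed formula for $A$. At order $z^2$, one collects contributions from the Cauchy-transform integral $\int d\mathsf{MP}/(4+\delta-x)$ (coming from the term $\frac{2+\delta}{2x_0 A_0}$ where $A_0 := 4+\delta-x_0$) and, after translating the second-order density correction $\frac{x_0-1}{x_0^2}\log A_0$ back to $x$-variables, the arcsine-weighted integral $\int_0^4 \log(4+\delta-x)/(2\pi\sqrt{x(4-x)})\,dx$; these combine with elementary constants to produce $B$.

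The main obstacle is that the Taylor expansion of $1/x(z,\theta)$ is not uniform as $\theta \to \pi$, because there $x(z,\theta) \to z^2$ rather than vanishing like $x_0(1-z)$, so the second-order integrand obtained by naive term-by-term expansion is formally divergent at $\theta = \pi$. To rigorize, split the integration at $\theta = \pi - \eta$ for an intermediate scale $\eta$ satisfying $z \ll \eta \ll 1$: on $[0, \pi - \eta]$ the Taylor expansion is uniformly valid and the truncated expansion yields the coefficients up to controlled error; on the boundary region $[\pi - \eta, \pi]$, bound the contributions to $F(z)$ and $F(0)$ separately using the uniform bound $|\log(4+\hat\delta - x)| \leq C$ and the total mass $\mathsf{MP}_y([a, a + O(\eta^2)]) = O(\eta^3)$ coming from the square-root vanishing of the density at the soft edge. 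A choice such as $\eta = z^{2/3}$ makes the boundary contribution $o(z^2)$, justifying the term-by-term expansion on the central region and giving the claimed asymptotics of $I$.
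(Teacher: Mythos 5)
Your algebraic setup is fine: the exact relation $4+\hat\delta=(4+\delta)(1-z+z^2/2)$ is correct, and your parameterization $x=x_0(1-z)+z^2$, $x_0=4\cos^2(\theta/2)$, $d\mathsf{MP}_y=\frac{2\sin^2\theta}{\pi x}\,d\theta$ is exactly the paper's change of variable $x\mapsto(1-z)x+z^2$ carrying $\mathsf{MP}$ to $\mathsf{MP}_y$, written trigonometrically (and you correctly read the statement as a difference of $\log$-integrals). The gap is in how you dispose of the window $\theta\in[\pi-\eta,\pi]$. That window sits at the \emph{lower} edge $x\approx(1-\sqrt y)^2$, not the soft upper edge: for $\mathsf{MP}$ the $x$-density blows up like $x^{-1/2}$ there, so in the $\theta$ variable the density tends to the constant $2/\pi$, and the mass of $[\pi-\eta,\pi]$ is $\Theta(\eta)$ for both $\mathsf{MP}$ and (when $\eta\gg z$) $\mathsf{MP}_y$ --- not $O(\eta^3)$; the $O(\eta^3)$ bound you invoke is what happens near $\theta=0$. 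Bounding the $F(z)$ and $F(0)$ contributions separately therefore yields only $O(\eta)$, and no choice $z\ll\eta\ll 1$ can rescue this, because the window's contribution to $I=F(z)-F(0)$ is genuinely $\Theta(z)$: it is precisely where the $-\log(4+\delta)$ in $A$ comes from, and it also feeds finite pieces into $B$ (e.g.\ the extra $2\int\frac{d\mathsf{MP}}{4+\delta-x}$ and part of the $-\tfrac32\log(4+\delta)$), which your sketch attributes to the central-region expansion. Concretely, term-by-term expansion on the central region gives at order $z$ only $z\bigl(\int\log(4+\delta-x)\,d\mathsf{MP}-1\bigr)$; the missing $-z\log(4+\delta)$ is generated by the formally divergent ``second-order'' term $-z^2\int\frac{\log(4+\delta-x_0)}{x_0}\,d\mathsf{MP}(x_0)$, whose natural regularization at $x_0\sim z^2$ promotes it to order $z$. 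In your scheme this term either gets discarded (losing part of $A$) or survives as an $\eta$-dependent coefficient of size $z^2/\eta$ (e.g.\ $z^{4/3}$ for $\eta=z^{2/3}$), which is neither $o(z^2)$ nor absorbed into fixed constants $A,B$. So the claimed expansion does not follow from the argument as written.

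To repair it you must actually resolve the lower-edge region rather than truncate it: either a matched/local analysis at $x-a\lesssim z^2$ that extracts both the $\Theta(z)$ and $\Theta(z^2)$ contributions (including the constants entering $B$), or, as the paper does, avoid pointwise expansion of $1/((1-z)x+z^2)$ altogether and evaluate the edge-sensitive integrals in closed form. The paper's route after the same change of variables is to split $I=I_1+I_2$ and use the exact identities $\int_0^4\frac{1}{ax+b}\frac{\sqrt{x(4-x)}}{2\pi}\,dx=\frac{-\sqrt{4ab+b^2}+2a+b}{2a^2}$ and $\int_0^4\frac{dx}{\sqrt x\,(bx+a)}=\frac{2\arctan(2\sqrt{b/a})}{\sqrt{ab}}$ with the small parameter $z^2$ inserted; these produce the singular piece $\log(4+\delta)/z$ (whose product with the $z^2$ prefactor yields the $-z\log(4+\delta)$ in $A$) together with the accompanying constants $\frac{\log(4+\delta)}{2}-\frac{\log(4+\delta)}{\pi}$, the Cauchy-transform term, and the arcsine-weighted integral that assemble into $B$. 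Your outline, by contrast, cannot produce these edge-generated constants, so the second-order coefficient would come out wrong even if the first-order term were patched by hand.
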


Using the standard change of variable that takes $\mathsf{MP}$ to $\mathsf{MP}_y$ ($x\to (1-z)x+z^2$) and substituting $\sqrt{y}=1-z$ we get 

$$I= \bigint_0^{4} \left[ \frac{\log \biggl( (4+\delta-x)(1-z+\frac{z^2}{2})+\frac{z^2(x-2)}{2} \biggr) }{(1-z)x+z^2} -\frac{\log (4+\delta-x)}{x}\right] \frac{\sqrt{x(4-x)}}{2\pi}~dx.$$

Write $I=I_1+I_2$ where 

$$I_1= \bigint_0^{4} \left[ \frac{\log \biggl( (4+\delta-x)(1-z+\frac{z^2}{2})+\frac{z^2(x-2)}{2} \biggr) }{(1-z)x+z^2} -\frac{\log (4+\delta-x)}{(1-z)x+z^2}\right] \frac{\sqrt{x(4-x)}}{2\pi}~dx;~\text{and}$$

$$I_2= \int_0^4 \left[ \frac{\log (4+\delta-x)}{(1-z)x+z^2}- \frac{\log (4+\delta-x)}{x}\right]\frac{\sqrt{x(4-x)}}{2\pi}~dx.$$

Theorem \ref{t:intest} will follow from the next two propositions. 

\begin{proposition}
\label{p:inti1}
As $z\to 0$, we have $I_1=A_1z+B_1z^2+o(z^2)$ where $A_1=-1$ and $B_1=-\frac{1}{2}+ \frac{1}{2}(2+\delta)\int \frac{1}{4+\delta-x} d\mathsf{MP}$.
\end{proposition}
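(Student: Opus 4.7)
The plan is to Taylor expand the integrand of $I_1$ in $z$, but only after first making a change of variables that absorbs the factor $1/((1-z)x+z^2)$ into a natural probability measure. A direct term-by-term Taylor expansion in the original variable $x$ turns out to be hazardous near $x=0$, where $z^2/x$ is not a small parameter; the change of variables sidesteps this issue cleanly.

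\textbf{Step 1 (Algebraic simplification).} First I observe that the $z^2$ terms in the argument of the logarithm combine nicely:
\[
(4+\delta-x)(1-z+\tfrac{z^2}{2}) + \tfrac{z^2(x-2)}{2} = (4+\delta-x)(1-z) + \tfrac{z^2(2+\delta)}{2},
\]
the $x$-dependence in the $z^2$ coefficient having cancelled. Factoring $(4+\delta-x)$ inside the log and subtracting $\log(4+\delta-x)$, the numerator of the integrand of $I_1$ becomes
\[
\log\Bigl[(1-z) + \tfrac{z^2(2+\delta)}{2(4+\delta-x)}\Bigr].
\]

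\textbf{Step 2 (Change of variables).} Substituting $\eta = (1-z)x + z^2$ (so $x = (\eta-z^2)/(1-z)$) gives $dx = d\eta/(1-z)$ and $\sqrt{x(4-x)} = \sqrt{(\eta-a)(b-\eta)}/(1-z)$ where $a=z^2$, $b=(2-z)^2$. A short calculation shows that the weight $\frac{\sqrt{x(4-x)}\,dx}{2\pi[(1-z)x+z^2]}$ becomes precisely $d\mathsf{MP}_{(1-z)^2}(\eta)$. Hence
\[
I_1 = \int_a^b \log\!\Bigl[(1-z) + \tfrac{z^2(2+\delta)}{2(4+\delta-x(\eta))}\Bigr] \, d\mathsf{MP}_{(1-z)^2}(\eta).
\]

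\textbf{Step 3 (Expand and integrate).} Writing $\log[(1-z)+w] = \log(1-z) + \log[1 + w/(1-z)]$ with $w = z^2(2+\delta)/(2(4+\delta-x(\eta)))$, the first piece expands as $-z - z^2/2 + O(z^3)$, while the second piece equals $z^2(2+\delta)/(2(4+\delta-\eta)) + O(z^3)$ because $x(\eta) = \eta + O(z)$ and $4+\delta-\eta \geq \delta$ uniformly on the support. This yields
\[
\log[1+u] = -z + z^2\Bigl[\tfrac{2+\delta}{2(4+\delta-\eta)} - \tfrac{1}{2}\Bigr] + O(z^3),
\]
uniformly in $\eta$. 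Integrating against $d\mathsf{MP}_{(1-z)^2}$ and using $\int d\mathsf{MP}_{(1-z)^2} = 1$ together with the Lipschitz continuity of $\alpha \mapsto \int f\,d\mathsf{MP}_\alpha$ at $\alpha = 1$ (which introduces only $O(z)$ error, hence $O(z^3)$ after multiplication by $z^2$), I conclude
\[
I_1 = -z + z^2\Bigl[\tfrac{2+\delta}{2}\int \tfrac{d\mathsf{MP}}{4+\delta-x} - \tfrac{1}{2}\Bigr] + o(z^2),
\]
giving the claimed $A_1 = -1$ and $B_1 = -\tfrac{1}{2} + \tfrac{1}{2}(2+\delta) \int \tfrac{d\mathsf{MP}}{4+\delta-x}$.

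\textbf{Main obstacle.} The subtle point is obtaining the correct $B_1$, specifically the constant $-\frac{1}{2}$. A naive direct expansion of $1/((1-z)x+z^2)$ as a power series in $z$ is invalid in the regime $x \lesssim z^2$, and produces a wrong constant if one integrates term-by-term. The change of variables in Step 2 is the crucial move: it absorbs the $1/((1-z)x+z^2)$ factor into the Marchenko-Pastur density with parameter $(1-z)^2$, converting the problem into computing expectations of smooth functions against a family of probability measures varying continuously with $z$. With this reformulation, uniform bounds on the Taylor remainder for $\log(1+u)$ are straightforward (since $4+\delta-x\geq \delta$), and the residual $z$-dependence of the measure $\mathsf{MP}_{(1-z)^2}$ contributes only at $o(z^2)$.
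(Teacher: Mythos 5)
Your argument is correct, and it reaches the same expansion as the paper by a somewhat different mechanism. The paper also combines the two logarithms and Taylor expands in $z$, but it keeps the weight $\frac{\sqrt{x(4-x)}}{2\pi((1-z)x+z^2)}\,dx$ as is, splits the result into $I_{11}+I_{12}$, and gets the delicate constants from the exact closed-form identity $\int_0^4\frac{1}{ax+b}\frac{\sqrt{x(4-x)}}{2\pi}\,dx=\frac{-\sqrt{4ab+b^2}+2a+b}{2a^2}$ (with $a=1-z$, $b=z^2$ this integral equals exactly $1$, which is where both the clean $-z$ and the $-\tfrac12$ come from), finishing the remaining piece by dominated convergence; the same algebraic split $\frac{x-2}{2(4+\delta-x)}=-\tfrac12+\frac{2+\delta}{2(4+\delta-x)}$ that you perform inside the logarithm in your Step 1 appears there at the level of $I_{12}$. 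Your route instead recognizes, via the substitution $\eta=(1-z)x+z^2$, that this weight is precisely $d\mathsf{MP}_{(1-z)^2}(\eta)$, so the normalization that the paper obtains from the explicit antiderivative is automatic (unit mass of $\mathsf{MP}_{(1-z)^2}$), and the small-$x$ region where $z^2/x$ is not small --- correctly identified by you as the danger zone --- is neutralized structurally rather than computationally. This is arguably more conceptual and avoids any exact integral evaluation; what it costs is that you must compare $\int\frac{d\mathsf{MP}_{(1-z)^2}}{4+\delta-\eta}$ with $\int\frac{d\mathsf{MP}}{4+\delta-\eta}$, for which you assert Lipschitz continuity of $y\mapsto\int f\,d\mathsf{MP}_y$ at $y=1$ without proof. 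That assertion is true for this $f$ (it is the Stieltjes transform evaluated at $4+\delta$, analytic in $y$ off the support; or argue via the moment expansion of $\mathsf{MP}_y$), but note you only need an $o(1)$ error there, since the term carries a prefactor $z^2$, and that weaker statement follows from the same dominated convergence argument the paper uses --- undoing your change of variables, it is literally the paper's DCT step. With that one-line justification added, your proof is complete.
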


\begin{proposition}
\label{p:inti2}
As $z\to 0$, we have $I_2=A_2z+B_2z^2+o(z^2)$ where
$$A_2= \int \log (4+\delta-x)~d\mathsf{MP}-\log (4+\delta);$$
$$B_2=-\frac{3}{2}\log (4+\delta)+ 2\int \frac{1}{4+\delta-x} d\mathsf{MP} +\int \log (4+\delta-x)~d\mathsf{MP}+ \int_0^{4} \frac{\log (4+\delta-x)}{2\pi \sqrt{x(4-x)}}~dx.$$
\end{proposition}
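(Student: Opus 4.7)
My plan for proving Proposition \ref{p:inti2} proceeds in four stages.

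\emph{Stage 1 (key cancellation).} The central observation is that the auxiliary integral
$J(z) := \int_0^4 \bigl[\tfrac{1}{(1-z)x+z^2} - \tfrac{1}{x}\bigr]\tfrac{\sqrt{x(4-x)}}{2\pi}\,dx$ vanishes identically in $z$. I will verify this by the change of variable $u=(1-z)x+z^2$, which carries the (unnormalized) density $\sqrt{x(4-x)}/(2\pi)$ to the density $\sqrt{(u-z^2)((2-z)^2-u)}/(2\pi((1-z)u+z^2))$ on $((1-z)^2,(1+\sqrt y)^2)$, whose integral equals that of $\sqrt{x(4-x)}/(2\pi x) = d\mathsf{MP}(x)$, namely $1$. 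Splitting $\log(4+\delta-x)=\log(4+\delta)+\tilde f(x)$ where $\tilde f(x):=\log(1-x/(4+\delta))$, this reduces $I_2$ to $\tilde I_2 := \int_0^4 \tilde f(x)\bigl[\tfrac{1}{(1-z)x+z^2}-\tfrac{1}{x}\bigr]\tfrac{\sqrt{x(4-x)}}{2\pi}\,dx$. The gain is that $\tilde f(x)=O(x)$ near $x=0$, which will tame the singular behavior of the expansion.

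\emph{Stage 2 (Taylor expansion with controlled remainder).} Computing $z$-derivatives of $1/((1-z)x+z^2)$ at $z=0$ gives the pointwise expansion
$$\tfrac{1}{(1-z)x+z^2}-\tfrac{1}{x} \;=\; \tfrac{z}{x}+\tfrac{z^2(x-1)}{x^2}+R_2(x,z),$$
and direct algebra yields the closed form $R_2(x,z)=z^3(x^2-2x-z(x-1))/[x^2((1-z)x+z^2)]$. I will bound $|R_2|$ in two regimes: on $[z,4]$ use $(1-z)x+z^2\ge (1-z)x$ to get $|R_2|\le Cz^3/x^3$, and on $[0,z]$ use $(1-z)x+z^2\ge z^2/2$ with $|x^2-2x-z(x-1)|=O(z)$ to get $|R_2|\le Cz^2/x^2$. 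Combined with $|\tilde f(x)|\le Cx$ and $\sqrt{x(4-x)}\le 2\sqrt x$, both contributions to $\int \tilde f\,R_2\,d\rho$ work out to $O(z^{5/2})=o(z^2)$.

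\emph{Stage 3 (identifying coefficients).} Using $\sqrt{x(4-x)}/(2\pi)\,dx = x\,d\mathsf{MP}(x)$, the leading term gives $A_2 = \int \tilde f\,d\mathsf{MP} = \int\log(4+\delta-x)\,d\mathsf{MP}-\log(4+\delta)$ as required, and the second order coefficient is $\tilde B_2 = \int \tilde f(x)(x-1)/x\,d\mathsf{MP}(x)=A_2-K$ with $K:=\int \tilde f(x)/x\,d\mathsf{MP}(x)$, a convergent integral thanks to $\tilde f(x)=O(x)$.

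\emph{Stage 4 (matching the stated form of $B_2$).} This is where the main computational work lies. I will write $\tilde f(x)=-\int_0^x ds/(4+\delta-s)$ and swap the order of integration to obtain $K=-\int_0^4 [4+\delta-s]^{-1}H(s)\,ds$, with $H(s)=\int_s^4 d\mathsf{MP}(x)/x$. The substitution $x=4\sin^2\theta$ yields the closed form $H(s)=-\tfrac12+\pi^{-1}\sqrt{(4-s)/s}+\pi^{-1}\arcsin(\sqrt{s/4})$. The three resulting pieces of $K$ are evaluated as follows: the elementary first piece equals $\tfrac12\log((4+\delta)/\delta)$; the second, via $x=4\sin^2\theta$ and the classical identity $\int_0^\pi d\phi/(A+B\cos\phi)=\pi/\sqrt{A^2-B^2}$, equals $2\int d\mathsf{MP}/(4+\delta-x)$; and the third, integrated by parts using $\frac{d}{ds}\arcsin(\sqrt{s/4})=1/(2\sqrt{s(4-s)})$, produces $-\tfrac12\log\delta+\int_0^4\log(4+\delta-s)/(2\pi\sqrt{s(4-s)})\,ds$. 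Combining, $K=\tfrac12\log(4+\delta)-2\int d\mathsf{MP}/(4+\delta-x)-\int_0^4 \log(4+\delta-s)/(2\pi\sqrt{s(4-s)})\,ds$, so that $\tilde B_2 = A_2-K$ reproduces the stated $B_2$ term by term.

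The principal obstacle is Stage 4: while Stage 1's cancellation is structural and Stage 2's remainder bounds are routine, matching the specific form of $B_2$ requires recognizing how the arcsine density $1/(\pi\sqrt{x(4-x)})$ emerges from integration by parts against $\arcsin(\sqrt{s/4})$. The divergent pieces $\log\delta$ that appear in individual subcomputations must be tracked carefully, since only their cancellation shows $\tilde B_2$ is independent of $\delta$-regularization artifacts.
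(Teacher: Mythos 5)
Your proposal is correct, but it takes a genuinely different route from the paper's proof. The paper factors $I_2$ algebraically as $z\int\log(4+\delta-x)\,d\mathsf{MP}+z^2I_{21}-z^2(z+1)I_{22}$ and then evaluates $I_{22}$ --- which contains an individually singular $\log(4+\delta)/z$ term --- by splitting off the constant $2\log(4+\delta)$, invoking the closed form \eqref{e:fact1} with arctan expansions, and an integration by parts (Lemmas \ref{l:i21}--\ref{l:i222}); the $-\log(4+\delta)$ in $A_2$ then arises from the $z^2\cdot z^{-1}$ cross term. You instead use the identity $\int_0^4\frac{1}{(1-z)x+z^2}\frac{\sqrt{x(4-x)}}{2\pi}\,dx=1$ (your $J(z)=0$, which follows at once from the paper's \eqref{e:wolf1} with $a=1-z$, $b=z^2$, or from recognizing the pushed-forward measure as $\mathsf{MP}_{(1-z)^2}$), so only $\tilde f(x)=\log\bigl(1-x/(4+\delta)\bigr)=O(x)$ survives; a direct second-order Taylor expansion in $z$ with your explicit remainder $R_2$ (I verified both the closed form of $R_2$ and the $O(z^{5/2})$ bounds) gives $A_2=\int\tilde f\,d\mathsf{MP}$ and $\tilde B_2=A_2-K$, and the remaining work is the closed-form evaluation of $K=\int \tilde f(x)x^{-1}\,d\mathsf{MP}$ via Fubini, your tail function $H(s)$ (correct as stated), and integration by parts against $\arcsin(\sqrt{s/4})$; the resulting $K=\tfrac12\log(4+\delta)-2\int\frac{d\mathsf{MP}}{4+\delta-x}-\int_0^4\frac{\log(4+\delta-s)}{2\pi\sqrt{s(4-s)}}\,ds$ indeed reproduces the stated $B_2$. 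What your route buys is that no $z^{-1}$-singular intermediate quantities ever appear and the error is quantitative ($O(z^{5/2})$ rather than repeated appeals to dominated convergence); what it costs is the extra exact computations of $H$ and $K$. Two cosmetic slips, neither affecting correctness: in Stage 1 the transformed density and interval should read $\sqrt{(u-z^2)((2-z)^2-u)}\,/\,\bigl(2\pi(1-z)^2\bigr)$ on $(z^2,(2-z)^2)$ rather than what you wrote, and in Stage 4 the second and third ``pieces'' are described without the overall minus sign coming from $K=-\int_0^4 H(s)/(4+\delta-s)\,ds$, although your combined formula for $K$ carries the correct signs and the $\log\delta$ cancellation is as you say.
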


We first prove Proposition \ref{p:inti1}.

\begin{proof}[Proof of Proposition \ref{p:inti1}]
Observe that $$I_1= \bigint_0^{4} \left[ \frac{\log \biggl( (1-z+\frac{z^2}{2})+\frac{z^2(x-2)}{2(4+\delta-x)} \biggr) }{(1-z)x+z^2}\right] \frac{\sqrt{x(4-x)}}{2\pi}~dx.$$
Call the term inside the bracket $S$. Observe that the numerator of $S$ if of the form $\log (1+w)$ where $|w|<1$ for $z$ small enough. By Taylor expanding the numerator of $S$ at $z=0$ we get 

$$S= \frac{-z+\frac{z^2(x-2)}{2(4+\delta-x) }+O(z^3)}{(1-z)x+z^2}$$
where the constant in the $O(\cdot)$ term depends on $x$ but is uniformly bounded above for $x\in [0,4]$. This together with the observation that the denominator of $S$ is $\Theta(x)$ for $z$ small and that $\sqrt{(4-x)/x}$ is integrable at 0 implies 

$$I_1= z\bigint_0^{4} \left[\frac{-1+\frac{z(x-2)}{2(4+\delta-x) }}{(1-z)x+z^2}\right] \frac{\sqrt{x(4-x)}}{2\pi}~dx +O(z^3).$$
We now write $I_1=I_{11}+I_{12}+O(z^3)$ where 

$$I_{11}= -z \int_{0}^{4} \frac{1}{(1-z)x+z^2}\frac{\sqrt{x(4-x)}}{2\pi}~dx;~\text{and}$$
$$I_{12}= z^2 \int_{0}^{4}\frac{\frac{(x-2)}{2(4+\delta-x)}}{(1-z)x+z^2}\frac{\sqrt{x(4-x)}}{2\pi}~dx.$$
To control $I_{11}$ and $I_{12}$ we need the following fact:
\begin{equation}
\label{e:wolf1}
\int_{0}^{4} \frac{1}{(ax+b)}\frac{\sqrt{x(4-x)}}{2\pi}~dx= \frac{-\sqrt{4ab+b^2}+2a+b}{2a^2}
\end{equation}
if $a,b>0$. Using this with $a=(1-z)$ and $b=z^2$ gives 
$$I_{11}=-z \frac{z(z-2)+2(1-z)+z^2}{2(1-z)^2}= -z.$$
Now moving on to $I_{12}$, observe that simple algebra gives 

$$ I_{12}= -\frac{z^2}{2} \int_{0}^{4} \frac{\sqrt{x(4-x)}}{2\pi((1-z)x+z^2)}~dx + \frac{z^2(\delta+2)}{2} \int_{0}^{4} \frac{1}{(4+\delta-x)}\frac{\sqrt{x(4-x)}}{2\pi((1-z)x+z^2)}~dx.$$
Using \eqref{e:wolf1} for the first term and observing that we have (by DCT) 
$$\int_{0}^{4} \frac{1}{(4+\delta-x)}\frac{\sqrt{x(4-x)}}{2\pi((1-z)x+z^2)}~dx \to \int \frac{1}{(4+\delta-x)}~d\mathsf{MP}$$
yields
$$ I_{12}=-\frac{z^2}{2}+ z^2\frac{\delta+2}{2}\int \frac{1}{(4+\delta-x)}~d\mathsf{MP}+ o(z^2)$$
as $z=0$. Combining with the above gives
$$I_1=-z+\left(-\frac{1}{2}+\frac{\delta+2}{2}\int \frac{1}{(4+\delta-x)}~d\mathsf{MP}\right)z^2+o(z^2)$$
as $z\to 0$ completing the proof.
\end{proof}

We now move towards the proof of Proposition \ref{p:inti2} which is more involved and we need a number of preparatory lemmas. 

\begin{lemma}
\label{l:i21}
Let 
$$I_{21}= \int_{0}^{4} \frac{\log (4+\delta-x)}{(1-z)x+z^2}\frac{\sqrt{x(4-x)}}{2\pi}~dx.$$
Then, as $z\to 0$, 
$$I_{21}= \int \log (4+\delta-x)~d\mathsf{MP}+o(1).$$
\end{lemma}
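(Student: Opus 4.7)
\medskip

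\noindent\textbf{Proof proposal for Lemma \ref{l:i21}.} The plan is to identify $I_{21}$ as an integral whose integrand converges pointwise to the integrand of $\int \log(4+\delta-x)\,d\mathsf{MP}$ and then invoke the dominated convergence theorem. Recalling from \eqref{e:mp1} that
$$\int \log(4+\delta-x)\,d\mathsf{MP} \;=\; \int_0^4 \frac{\log(4+\delta-x)}{x}\cdot\frac{\sqrt{x(4-x)}}{2\pi}\,dx,$$
the key observation is the trivial pointwise convergence $\frac{1}{(1-z)x+z^2}\to\frac{1}{x}$ as $z\to 0$ for each $x\in(0,4]$. Multiplying by $\log(4+\delta-x)\sqrt{x(4-x)}/(2\pi)$, the pointwise limit of the integrand of $I_{21}$ is precisely the integrand above, so the conclusion will follow provided DCT applies.

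For the dominating function, I would restrict attention to $z\in(0,1/2)$ (we only care about $z\to 0$) and use the elementary bound
$$(1-z)x+z^2 \;\geq\; (1-z)x \;\geq\; \tfrac{x}{2}, \qquad x\in(0,4],$$
so the integrand of $I_{21}$ is bounded in absolute value by
$$F(x) \;:=\; \frac{2\,|\log(4+\delta-x)|}{x}\cdot\frac{\sqrt{x(4-x)}}{2\pi} \;=\; \frac{|\log(4+\delta-x)|\sqrt{4-x}}{\pi\sqrt{x}},$$
which is independent of $z$. Integrability of $F$ on $(0,4)$ is straightforward: near $x=0$, $F(x)\sim \frac{|\log(4+\delta)|\cdot 2}{\pi\sqrt{x}}$, which is integrable, and near $x=4$ the factor $\log(4+\delta-x)$ remains bounded (since $4+\delta-x\geq \delta>0$ on $[0,4]$) while $\sqrt{4-x}$ is obviously integrable.

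Applying the dominated convergence theorem then yields
$$\lim_{z\to 0} I_{21} \;=\; \int_0^4 \frac{\log(4+\delta-x)}{x}\cdot\frac{\sqrt{x(4-x)}}{2\pi}\,dx \;=\; \int \log(4+\delta-x)\,d\mathsf{MP},$$
establishing the claim. There is no substantive obstacle in this step; the only point worth noting is that the $\delta>0$ hypothesis ensures the logarithm stays bounded near the right endpoint of the support, which is exactly what makes the dominating function $F$ integrable at $x=4$. This lemma will then serve as one of the two building blocks (together with an analogous analysis of the remaining piece of $I_2$) feeding into the proof of Proposition \ref{p:inti2}.
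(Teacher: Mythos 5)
Your proof is correct and follows essentially the same route as the paper: both arguments reduce the claim to the dominated convergence theorem, with the paper bounding the ratio $\frac{x}{(1-z)x+z^2}$ uniformly against $d\mathsf{MP}$ and you bounding the denominator below by $x/2$ for small $z$ to produce an equivalent $z$-independent dominating function. No gap; the observation that $\delta>0$ keeps the logarithm bounded at the right endpoint is exactly the point needed for integrability.
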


\begin{proof}
Observe that 
$$I_{21}= \int \log (4+\delta-x)\frac{x}{(1-z)x+z^2}d\mathsf{MP}.$$
Now the proof is completed by noticing that $\frac{x}{(1-z)x+z^2}$ is uniformly bounded for $x\in [0,4]$ and $z\in [0,1]$, and applying DCT. 
\end{proof}

\begin{lemma}
\label{l:i22} Let 
$$I_{22}= \frac{1}{2\pi}\int_{0}^{4} \frac{\log (4+\delta-x)\sqrt{4-x}}{\sqrt{x}((1-z)x+z^2)}~dx.$$
Then as, $z\to 0$ we have 
$$I_{22}= \log (4+\delta) \frac{1}{z}+ \frac{\log (4+\delta)}{2} -2 \int \frac{1}{4+\delta-x}~d\mathsf{MP}-\int_{0}^{4} \frac{\log (4+\delta-x)}{2\pi \sqrt{x(4-x)}}~dx +o(1).$$
\end{lemma}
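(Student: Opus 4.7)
The plan is to isolate the singular $1/z$ behavior coming from the $x=0$ endpoint and treat the remainder by dominated convergence followed by integration by parts. Write $\log(4+\delta-x)\sqrt{4-x} = 2\log(4+\delta) + h(x)$, where $h(x) := \log(4+\delta-x)\sqrt{4-x} - 2\log(4+\delta)$ is smooth on $[0,4)$, vanishes linearly at $x=0$, and satisfies $h(4) = -2\log(4+\delta)$. Correspondingly decompose $I_{22} = I_{22}^{(a)} + I_{22}^{(b)}$, where
\[
I_{22}^{(a)} = \frac{\log(4+\delta)}{\pi}\int_0^4 \frac{dx}{\sqrt{x}\,((1-z)x+z^2)}, \qquad I_{22}^{(b)} = \frac{1}{2\pi}\int_0^4 \frac{h(x)\,dx}{\sqrt{x}\,((1-z)x+z^2)}.
\]

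For $I_{22}^{(a)}$, I would make the substitution $x = z^2 u^2$, which transforms the integral into $\frac{2}{z}\int_0^{2/z}\frac{du}{(1-z)u^2+1} = \frac{2}{z\sqrt{1-z}}\arctan\!\bigl(2\sqrt{1-z}/z\bigr)$. Using $\arctan(y) = \pi/2 - 1/y + O(1/y^3)$ as $y\to\infty$ and expanding $(1-z)^{-1/2}$ in powers of $z$, the bracket becomes $\pi/z + \pi/2 - 1 + O(z)$, so
\[
I_{22}^{(a)} = \frac{\log(4+\delta)}{z} + \frac{\log(4+\delta)}{2} - \frac{\log(4+\delta)}{\pi} + O(z).
\]
For $I_{22}^{(b)}$, since $h(x) = O(x)$ near $0$, the integrand is bounded by $C/\sqrt{x}$ uniformly in $z$ small (using $(1-z)x+z^2 \ge (1-z)x$), so dominated convergence gives $I_{22}^{(b)} \to \frac{1}{2\pi}\int_0^4 h(x)x^{-3/2}\,dx$. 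I would then integrate by parts with $u=h(x)$, $dv = x^{-3/2}\,dx$, $v = -2/\sqrt{x}$. The $x=0$ boundary vanishes because $h(x)/\sqrt{x} = O(\sqrt{x})$, and at $x=4$ it yields $2\log(4+\delta)$. Computing $h'(x) = -\sqrt{4-x}/(4+\delta-x) - \log(4+\delta-x)/(2\sqrt{4-x})$ and recognizing $\int_0^4 \frac{\sqrt{4-x}}{\sqrt{x}(4+\delta-x)}\,dx = 2\pi\int\frac{d\mathsf{MP}}{4+\delta-x}$ identifies the two terms in the target formula.

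Combining the two contributions, the spurious $-\log(4+\delta)/\pi$ from the arctan expansion cancels exactly against the $+\log(4+\delta)/\pi$ produced by the boundary term at $x=4$ in the integration by parts, leaving precisely the claimed expression. The main obstacle I anticipate is purely bookkeeping: tracking the $O(1)$ contributions so that these two $\pm\log(4+\delta)/\pi$ terms cancel with correct sign and normalization. The dominated convergence step and the integrability of $h(x)/x^{3/2}$ at $x=0$ are routine given $h(0)=0$ and smoothness; the arctan asymptotic is straightforward; the only place where one can easily slip is the constant term in $\frac{2}{z\sqrt{1-z}}\arctan(2\sqrt{1-z}/z)$, which requires expanding both the arctan and the prefactor to the right order in $z$.
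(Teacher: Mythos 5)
Your proposal is correct and takes essentially the same route as the paper: the same splitting of the numerator into $2\log(4+\delta)$ plus a remainder vanishing at $x=0$, dominated convergence and integration by parts (with the same boundary term $2\log(4+\delta)$ at $x=4$) for the remainder, and the arctan asymptotics for the singular piece, with the two $\pm\log(4+\delta)/\pi$ contributions cancelling exactly as in the paper's Lemmas \ref{l:i221} and \ref{l:i222}. The only cosmetic difference is that you obtain the arctan expression via the substitution $x=z^2u^2$ instead of quoting the closed-form integral identity used in the paper.
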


Proof of Lemma \ref{l:i22} involves some heavy computation so we postpone it for the moment and complete the proof of Proposition \ref{p:inti2}. 

\begin{proof}
We start by observing that 
\begin{eqnarray*}
I_2&=& \int_0^4 \frac{\log (4+\delta-x)}{x} \left[ \frac{x-(1-z)x-z^2}{(1-z)x+z^2} \right]\frac{\sqrt{x(4-x)}}{2\pi}~dx\\
&=&z\int_0^4 \frac{\log (4+\delta-x)}{x} \left[ \frac{(x-z)}{(1-z)x+z^2} \right]\frac{\sqrt{x(4-x)}}{2\pi}~dx\\
&=&z\int_0^4 \frac{\log (4+\delta-x)}{x} \left[1+ \frac{(x-z)}{(1-z)x+z^2}-1\right]\frac{\sqrt{x(4-x)}}{2\pi}~dx\\
&=&z\int_0^4 \frac{\log (4+\delta-x)}{x} \left[1+ \frac{zx-z^2-z}{(1-z)x+z^2}\right]\frac{\sqrt{x(4-x)}}{2\pi}~dx\\
&=&z\int \log (4+\delta-x)~d\mathsf{MP}+z^2\int_0^4 \frac{\log (4+\delta-x)}{x} \left[\frac{x-z-1}{(1-z)x+z^2}\right]\frac{\sqrt{x(4-x)}}{2\pi}~dx\\
&=& z\int \log (4+\delta-x)~d\mathsf{MP}+z^2I_{21}-z^2(z+1)I_{22}.
\end{eqnarray*}
Using Lemma \ref{l:i22} we now have 
$$-(z+1)I_{22}=-\log (4+\delta) \frac{1}{z}+ \left( -\frac{3}{2}\log (4+\delta) + 2 \int \frac{1}{4+\delta-x}~d\mathsf{MP}+\int_{0}^{4} \frac{\log (4+\delta-x)}{2\pi \sqrt{x(4-x)}}~dx\right)+o(1)$$
as $z\to 0$. Using this together with Lemma \ref{l:i21} we get 
\begin{eqnarray*}
I_{2}&=&\left(\int \log (4+\delta-x)~d\mathsf{MP}-\log (4+\delta)\right)z\\ 
&+& \left(-\frac{3}{2}\log (4+\delta)+ 2\int \frac{1}{4+\delta-x} d\mathsf{MP} +\int \log (4+\delta-x)~d\mathsf{MP}+ \int_0^{4} \frac{\log (4+\delta-x)}{2\pi \sqrt{x(4-x)}}~dx\right)z^2\\
&+& o(z^2)
\end{eqnarray*}
as $z\to 0$ completing the proof of the proposition.
\end{proof}

We now move towards the proof of Lemma \ref{l:i22}. For this proof we shall write $I_{22}=I_{221}+I_{222}$ where 
$$I_{221}= \frac{1}{2\pi}\int_{0}^{4} \frac{\log (4+\delta-x)\sqrt{4-x}-2\log (4+\delta)}{\sqrt{x}((1-z)x+z^2)}~dx;~\text{and}$$
$$I_{222}= \frac{2\log (4+\delta)}{2\pi} \int_{0}^{4} \frac{1}{\sqrt{x}((1-z)x+z^2)}~dx.$$
We shall control $I_{221}$ and $I_{222}$ separately in the next two lemmas which put together will immediately imply Lemma \ref{l:i22}. 

\begin{lemma}
\label{l:i221}
With the above notations we have 
$$I_{221}= \frac{\log (4+\delta)}{\pi}-2\int \frac{1}{ (4+\delta-x)}~d\mathsf{MP}-\int_0^{4} \frac{\log (4+\delta-x)}{2\pi \sqrt{x(4-x)}}~dx+o(1)$$
as $z\to 0$.
\end{lemma}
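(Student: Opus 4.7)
The plan is to first show that we can interchange limit and integral, obtaining
$$\lim_{z\to 0} I_{221} = \frac{1}{2\pi}\int_0^4 \frac{\log(4+\delta-x)\sqrt{4-x} - 2\log(4+\delta)}{x\sqrt{x}}\,dx,$$
and then to evaluate the resulting integral in closed form via integration by parts, matching the claimed expression up to an $o(1)$ error.

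For the first step, write $G(x):=\log(4+\delta-x)\sqrt{4-x}$ and note the crucial cancellation $G(0)=2\log(4+\delta)$, so the numerator of $I_{221}$ is exactly $G(x)-G(0)$. Since $G$ is smooth at $0$ with $G'(0)=-\frac{2}{4+\delta}-\frac{\log(4+\delta)}{4}$, we have $|G(x)-G(0)|\leq C x$ on $[0,2]$ and $|G(x)-G(0)|\leq C$ on $[2,4]$. Using $(1-z)x+z^2\geq (1-z)x$ for $z\leq 1/2$, the integrand is bounded pointwise by $2C/\sqrt{x}$ on $[0,2]$ and by an integrable function on $[2,4]$, so dominated convergence applies.

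The main work is computing the limit integral $L:=\int_0^4 (G(x)-G(0))/x^{3/2}\,dx$. The plan is to decompose
$$G(x)-G(0) \;=\; \log\tfrac{4+\delta-x}{4+\delta}\sqrt{4-x} \;+\; \log(4+\delta)\bigl(\sqrt{4-x}-2\bigr),$$
so that $L=L_1+\log(4+\delta)\cdot L_2$ with $L_2=\int_0^4 (\sqrt{4-x}-2)/x^{3/2}\,dx$ and $L_1$ the integral of the first piece. For $L_2$, integration by parts with $u=\sqrt{4-x}-2$ and $v=-2x^{-1/2}$ (boundary terms vanishing at $0$ because $\sqrt{4-x}-2=O(x)$, and equaling $2$ at $x=4$) yields $L_2=2-\pi$, using the standard identity $\int_0^4 dx/\sqrt{x(4-x)}=\pi$. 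For $L_1$, the same integration-by-parts template with $u=\log\tfrac{4+\delta-x}{4+\delta}\sqrt{4-x}$ produces zero boundary terms and leaves
$$L_1=-\int_0^4\frac{2\sqrt{4-x}}{\sqrt{x}\,(4+\delta-x)}\,dx-\int_0^4\frac{\log(4+\delta-x)-\log(4+\delta)}{\sqrt{x(4-x)}}\,dx,$$
which after recognizing the first integral as $4\pi\int (4+\delta-x)^{-1}\,d\mathsf{MP}$ and again using $\int_0^4 dx/\sqrt{x(4-x)}=\pi$ gives exactly the expression
$$L_1 = -4\pi\int \frac{d\mathsf{MP}}{4+\delta-x} \;-\; \int_0^4\frac{\log(4+\delta-x)}{\sqrt{x(4-x)}}\,dx \;+\; \pi\log(4+\delta).$$
Combining $L=L_1+(2-\pi)\log(4+\delta)$ and dividing by $2\pi$ produces the claimed asymptotic; the $\pi\log(4+\delta)$ from $L_1$ cancels the $-\pi\log(4+\delta)$ from $L_2$, leaving the $\log(4+\delta)/\pi$ term.

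The main obstacle is purely bookkeeping: one must justify the vanishing of all boundary terms (the nontrivial check being that $\log\tfrac{4+\delta-x}{4+\delta}\sqrt{4-x}\cdot x^{-1/2}\to 0$ at $x=0$, which follows from the order-$x$ behavior of the logarithm near $0$), and then verify that every constant lines up. No estimate here is delicate; the content is the algebraic cancellation of the $\log(4+\delta)$ terms and the matching of the $\int d\mathsf{MP}/(4+\delta-x)$ coefficient against the standard form of the Marchenko–Pastur density $d\mathsf{MP}=\sqrt{4-x}/(2\pi\sqrt{x})\,dx$.
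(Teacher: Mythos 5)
Your proposal is correct and follows essentially the same route as the paper: dominated convergence to replace $(1-z)x+z^2$ by $x$, followed by integration by parts against $x^{-3/2}$ and identification of $\frac{\sqrt{4-x}}{2\pi\sqrt{x}}\,dx$ with $d\mathsf{MP}$. The only difference is cosmetic bookkeeping — you split the numerator into two pieces and integrate each by parts (so the $\log(4+\delta)/\pi$ term arises from a cancellation), whereas the paper performs a single integration by parts in which that term comes from the boundary contribution at $x=4$; the constants agree either way.
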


\begin{proof}
Observe first that 
$$\frac{|\log (4+\delta-x)\sqrt{4-x}-2\log (4+\delta)|}{x^{3/2}}$$
is integrable on $[0,4]$ since it behaves like $\frac{1}{\sqrt{x}}$ near $0.$ This and observing that $(1-z)x+z^2=\Omega(x)$ uniformly for $x\in [0,4]$ and for small $z$ implies that DCT is applicable and 
$$ I_{221}=\frac{1}{2\pi}\int_{0}^{4} \frac{\log (4+\delta-x)\sqrt{4-x}-2\log (4+\delta)}{x^{3/2}}~dx +o(1)$$
as $z\to 0$.
Integrating by parts we get 
\begin{align*}
\frac{1}{2\pi}\int_{0}^{4} \frac{\log (4+\delta-x)\sqrt{4-x}-2\log (4+\delta)}{x^{3/2}}~dx &= \left[\frac{-2x^{-1/2}(\log (4+\delta-x)\sqrt{4-x}-2\log (4+\delta))}{2\pi} \right]_0^{4}\\&- \int_{0}^4 \frac{2x^{-1/2}}{2\pi}\left(  \frac{\sqrt{4-x}}{4+\delta-x}+ \frac{\log (4+\delta-x)}{2\sqrt{4-x}}\right)~dx\\
&=\frac{\log (4+\delta)}{\pi}-2\int \frac{1}{\log (4+\delta-x)}~d\mathsf{MP}\\
&-\int_0^{4} \frac{\log (4+\delta-x)}{2\pi \sqrt{x(4-x)}}~dx,
\end{align*}
completing the proof of the lemma.
\end{proof}

\begin{lemma}
\label{l:i222}
With the above notations we have 
$$I_{222}= \frac{\log (4+\delta)}{z}+\frac{\log (4+\delta)}{2}-\frac{\log (4+\delta)}{\pi}+o(1)$$
as $z\to 0$.
\end{lemma}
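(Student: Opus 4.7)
The plan is to reduce the integral defining $I_{222}$ to a standard arctangent integral via the substitution $u=\sqrt{x}$ and then extract the asymptotic expansion as $z\to 0$ by a careful expansion of $\arctan$ near infinity.

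More concretely, writing $x=u^2$, $dx=2u\,du$ gives
\begin{equation*}
\int_0^4 \frac{dx}{\sqrt{x}\,((1-z)x+z^2)} \;=\; 2\int_0^2 \frac{du}{(1-z)u^2+z^2}.
\end{equation*}
This integral evaluates in closed form as
\begin{equation*}
\frac{2}{z\sqrt{1-z}}\,\arctan\!\left(\frac{2\sqrt{1-z}}{z}\right).
\end{equation*}
As $z\to 0$, the argument of $\arctan$ tends to $+\infty$, so I would use the identity $\arctan(1/w)=\tfrac{\pi}{2}-w+O(w^3)$ with $w=\tfrac{z}{2\sqrt{1-z}}$ to obtain
\begin{equation*}
\arctan\!\left(\frac{2\sqrt{1-z}}{z}\right)=\frac{\pi}{2}-\frac{z}{2\sqrt{1-z}}+O(z^3).
\end{equation*}

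Multiplying by $\tfrac{2}{z\sqrt{1-z}}$ and using the Taylor expansions $\tfrac{1}{\sqrt{1-z}}=1+\tfrac{z}{2}+O(z^2)$ and $\tfrac{1}{1-z}=1+O(z)$ yields
\begin{equation*}
\int_0^4\frac{dx}{\sqrt{x}\,((1-z)x+z^2)}=\frac{\pi}{z}+\frac{\pi}{2}-1+o(1).
\end{equation*}
Substituting into the definition of $I_{222}=\frac{\log(4+\delta)}{\pi}\cdot\int_0^4\frac{dx}{\sqrt{x}\,((1-z)x+z^2)}$ gives exactly
\begin{equation*}
I_{222}=\frac{\log(4+\delta)}{z}+\frac{\log(4+\delta)}{2}-\frac{\log(4+\delta)}{\pi}+o(1),
\end{equation*}
as claimed.

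There is no real obstacle here: the only subtlety is keeping track of the expansion to sufficient order, since the leading term is $O(1/z)$ and we need the constant term to one higher order of accuracy. The substitution $u=\sqrt{x}$ is forced by the $\sqrt{x}$ in the denominator, and once made the entire computation is essentially a one-variable asymptotic expansion of $\tfrac{2}{z\sqrt{1-z}}\arctan\!\bigl(\tfrac{2\sqrt{1-z}}{z}\bigr)$ around $z=0$.
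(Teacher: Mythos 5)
Your computation is correct and follows essentially the same route as the paper: evaluate the integral in closed form as $\frac{2}{z\sqrt{1-z}}\arctan\bigl(\frac{2\sqrt{1-z}}{z}\bigr)$ (the paper quotes this identity directly, you derive it by the substitution $u=\sqrt{x}$), then expand the arctangent near infinity and the $\sqrt{1-z}$ factors to extract the $\frac{\pi}{z}+\frac{\pi}{2}-1$ asymptotics. The bookkeeping matches the paper's proof exactly.
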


\begin{proof}
We use the following three facts which are easy to verify: 
\begin{equation}
\label{e:fact1}
\int_{0}^{4} \frac{1}{\sqrt{x}(bx+a)}~dx= \frac{2\arctan \left(2\sqrt{\frac{b}{a}}\right)}{\sqrt{ab}}~\forall a,b>0.
\end{equation}

\begin{equation}
\label{e:fact2}
\frac{\arctan \left(\frac{z}{2\sqrt{1-z}}\right)}{\sqrt{1-z}}=\frac{z}{2}+O(z^2)~\text{as}~z\to 0.
\end{equation}

\begin{equation}
\label{e:fact3}
\frac{1}{\sqrt{1-z}}=1+\frac{z}{2}+O(z^2)~\text{as}~z\to 0.
\end{equation}

Using \eqref{e:fact1} with $a=z^2$ and $b=(1-z)$ gives

\begin{eqnarray*}
I_{222} &=&  \frac{2\log (4+\delta)}{2\pi} \frac{2 \arctan \left( 2\sqrt{\frac{1-z}{z^2}} \right)}{\sqrt{z^2(1-z)}}\\
&=& \frac{4\log (4+\delta)}{2\pi} \frac{\left( \frac{\pi}{2}- \arctan \frac{z}{2\sqrt{1-z}}  \right)}{z\sqrt{1-z}}\\
&=& \frac{\log (4+\delta)}{z\sqrt{1-z}}- \frac{4\log (4+\delta)}{2\pi} \frac{\arctan \frac{z}{2\sqrt{1-z}}}{z\sqrt{1-z}}.
\end{eqnarray*}

Using this together with \eqref{e:fact2} and \eqref{e:fact3} we get 
\begin{eqnarray*}
I_{222} &=& \frac{\log (4+\delta)}{z} \left[1+\frac{z}{2}+O(z^2)\right] -\frac{4\log (4+\delta)}{2\pi z} \left[ \frac{z}{2}+O(z^2)\right]\\
&=&  \frac{\log (4+\delta)}{z}+\frac{\log (4+\delta)}{2}-\frac{\log (4+\delta)}{\pi}+O(z)
\end{eqnarray*}
as $z\to 0$ completing the proof of the lemma.

\end{proof}

We are now ready to prove Theorem \ref{t:ratecompare} which directly implies Proposition \ref{curvature}.
Let us recall the set up: $n$ is a large integer and let $0\leq c \leq n$ be in $\N$. Set $m_1=n+c$, $n_1=n-c$ and $y=\frac{n_1}{m_1} \leq 1$. Define, 
\begin{equation}
\label{e:a0b0}
A_0 =\log \frac{Z_{m_1-1,n_1-1}}{Z_{m_1,n_1}}, \qquad B_0=\log \frac{Z_{n-1,n-1}}{Z_{n,n}},
\end{equation}

\begin{equation}
\label{e:a1b1}
A_1 = \left(2n_1 \int \log(4+\hat \delta-x){\rm d}\mathsf{MP}_y \right), \qquad B_1=\left(2n \int \log(4+ \delta-x){\rm d}\mathsf{MP} \right),
\end{equation}

\begin{equation}
\label{e:a2b2}
A_2 = -n_1 \int x{\rm d}\mathsf{MP}_y, \qquad B_2=-n \int x{\rm d}\mathsf{MP},
\end{equation}

\begin{equation}
\label{e:a3b3}
A_3 = (m_1-n_1) \log (4+\hat \delta), \qquad B_3=-(n-n) \log (4+ \delta),
\end{equation}

\begin{equation}
\label{e:a4b4}
A_4 = -m_1 (4+\hat \delta), \qquad B_3=-n(4+ \delta),
\end{equation}
where $\hat \delta$ is defined by $(4+\delta)n=(4+\hat \delta)m_1$. Our main objective is to prove the following result. 

\begin{theorem}
\label{t:ratecompare}
For $\delta>0$, with the notations as above, we have
$$\sum_{i=0}^{4} (A_{i}-B_{i})=-\beta_{\delta} \frac{c^2}{n} +o\left(\frac{c^2}{n}\right)$$
as $\frac{c}{n}\to 0$ for some $\beta_{\delta}>0$. 
\end{theorem}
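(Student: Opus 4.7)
The plan is to expand each $A_i - B_i$ as an asymptotic series in $c/n$, retain terms through order $c^2/n$, and verify that the linear-in-$c$ contributions cancel identically so that the surviving $c^2/n$ coefficient defines $\beta_\delta$. Three of the five terms are essentially immediate. By the defining relation $(4+\hat\delta)m_1 = (4+\delta)n$ we have $A_4 - B_4 = -m_1(4+\hat\delta) + n(4+\delta) = 0$ exactly; by \eqref{e:mpmean}, $\int x\, d\mathsf{MP}_y = 1$ for every $y \in (0,1]$, so $A_2 - B_2 = -n_1 + n = c$; and Taylor expanding $\log(4+\hat\delta) = \log(4+\delta) - \log(1+c/n)$ gives $A_3 - B_3 = 2c\log(4+\delta) - 2c^2/n + O(c^3/n^2)$.

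For the partition function term, Lemma \ref{l:partition} yields
\[
A_0 - B_0 = -c + (n-c)\log\tfrac{n+c}{n-c} + O(1),
\]
and the odd Taylor expansion $\log\frac{1+u}{1-u} = 2u + \tfrac{2}{3}u^3 + \cdots$ at $u = c/n$ then delivers $A_0 - B_0 = c - 2c^2/n + O(1) + O(c^3/n^2)$. The additive $O(1)$ is harmless because the downstream application, Proposition \ref{curvature}, already admits an $O(g(n))$ error that absorbs it.

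The main piece is $A_1 - B_1$, where Theorem \ref{t:intest} is the essential input. Setting $z = 1 - \sqrt{y}$ and writing $\int \log(4+\hat\delta - x)\, d\mathsf{MP}_y - \int \log(4+\delta-x)\, d\mathsf{MP} = Az + Bz^2 + o(z^2)$ from the theorem, we split
\[
A_1 - B_1 = -2c\int \log(4+\delta-x)\, d\mathsf{MP} + 2n_1(Az + Bz^2) + o(n z^2).
\]
Since $y = (n-c)/(n+c)$, we compute $z = c/n - c^2/(2n^2) + O((c/n)^3)$, whence $2n_1 z = 2c - 3c^2/n + O(c^3/n^2)$ and $2n_1 z^2 = 2c^2/n + O(c^3/n^2)$. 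Plugging in the explicit form $A = -1 + \int \log(4+\delta-x)\,d\mathsf{MP} - \log(4+\delta)$ from Theorem \ref{t:intest}, the linear-in-$c$ part of $A_1 - B_1$ reduces to $-2c - 2c\log(4+\delta)$, while the $c^2/n$ part is $(2B - 3A)\, c^2/n$.

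Summing the five contributions, two independent cancellations occur at linear order: the coefficients of $c$ add to $1 + (-2) + 1 + 0 + 0 = 0$, and the coefficients of $c\log(4+\delta)$ add to $0 + (-2) + 0 + 2 + 0 = 0$. Conceptually this cancellation is forced, since $\hat\delta$ was defined precisely so that the two rate functions should agree to leading order under a perturbation of the aspect ratio. What remains at order $c^2/n$ is $-2 + (2B - 3A) + 0 - 2 + 0 = 2B - 3A - 4$, and setting $\beta_\delta := 4 + 3A - 2B$ yields the required explicit constant expressed through $\mathsf{MP}$-integrals; its positivity reflects the fact that the square case $M = N$ is optimal among aspect ratios subject to the fixed threshold $(4+\delta)n$. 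The genuine technical obstacle for the overall argument is Theorem \ref{t:intest} itself, dealt with by Propositions \ref{p:inti1} and \ref{p:inti2}; once granted, Theorem \ref{t:ratecompare} reduces to the Taylor-series bookkeeping above, the key nontrivial check being the two linear cancellations just displayed.
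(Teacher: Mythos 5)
Your route is the same as the paper's: the identical decomposition into the five differences, $A_4-B_4=0$ and $A_2-B_2=c$ via \eqref{e:mpmean}, the expansion of $A_3-B_3$ (the paper's Lemma \ref{l:3}), the partition-function term through Lemma \ref{l:partition}, and the main term $A_1-B_1$ written as $2n_1I-2c\int\log(4+\delta-x)\,d\mathsf{MP}$ with $I=Az+Bz^2+o(z^2)$ from Theorem \ref{t:intest} and $z=\tfrac{c}{n}-\tfrac{c^2}{2n^2}+O(\tfrac{c^3}{n^3})$; your two linear cancellations and the coefficient $2B-3A$ all match the paper's computation. The one point of divergence is $A_0-B_0$, where you obtain $c-\tfrac{2c^2}{n}+O(1)$ while the paper's Lemma \ref{l:0} asserts $c-\tfrac{6c^2}{n}$. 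Here you are correct and the paper has a sign slip: its proof expands $\log(1-u)$ with $+\tfrac{u^2}{2}$ instead of $-\tfrac{u^2}{2}$, whereas $(n-c)\log\tfrac{n+c}{n-c}=2c-\tfrac{2c^2}{n}+O(\tfrac{c^3}{n^2})$ (immediate from the odd series of $\log\tfrac{1+u}{1-u}$). Hence the correct constant is your $\beta_\delta=4+3A-2B$, i.e.\ the paper's explicit $\beta_\delta$ diminished by $4$ (so the constant displayed in Proposition \ref{curvature} should be adjusted accordingly); a consistency check is that at $\delta=0$ one has $\int\log(4-x)\,d\mathsf{MP}=1$, $\int\tfrac{1}{4-x}\,d\mathsf{MP}=\tfrac12$ and $\int_0^4\tfrac{\log(4-x)}{2\pi\sqrt{x(4-x)}}\,dx=0$, giving $\beta_0=0$ for your constant, which matches the moderate-deviation scaling $c^3/n^2$ of $\log\P(T_{(1,1),(n+c,n-c)}\ge 4n)$, while the paper's value would give $\beta_0=4$. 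None of this damages the downstream applications in Sections \ref{ptu} and \ref{ptl}, where only positivity and the order $\tfrac{c^2}{n}$ are used.

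The genuine gap in your proposal is precisely that positivity: the statement asserts $\beta_\delta>0$, and you support it only with the heuristic that the square aspect ratio is optimal. That heuristic yields at best $\beta_\delta\ge 0$, and with your (correct) constant the question is genuinely delicate, since $\beta_\delta\to 0$ as $\delta\to 0^+$ (indeed $\beta_0=0$ by the computation above), so strict positivity for a fixed $\delta>0$ cannot be taken for granted. What is needed is the verification the paper carries out at the end of its proof, which transfers unchanged to your constant because the two differ by an additive number: write $\beta_\delta$ explicitly in terms of $\mathsf{MP}$-integrals and differentiate under the integral sign to get $\beta_\delta'=(6+\delta)\int\tfrac{1}{(4+\delta-x)^2}\,d\mathsf{MP}-\tfrac{1}{\pi}\int_0^4\tfrac{dx}{(4+\delta-x)\sqrt{x(4-x)}}=\tfrac{6+\delta}{(4+\delta)^{3/2}\sqrt{\delta}}-\tfrac{1}{\sqrt{(4+\delta)\delta}}>0$, and then combine strict monotonicity with $\beta_{0^+}=0$ to conclude $\beta_\delta>0$ for every $\delta>0$. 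Without some such argument the theorem as stated is not fully proved. (A minor further remark: the $O(1)$ inherited from Lemma \ref{l:partition} is $o(\tfrac{c^2}{n})$ only when $c\gg\sqrt{n}$; as you note, in the intended application it is absorbed into the $O(g(n))$ error of Proposition \ref{curvature}, and the paper's own statement shares this feature.)
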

We note down some preparatory facts before starting with the proof of Theorem \ref{t:ratecompare} starting by recalling \eqref{a0}-\eqref{a4}.

\begin{align}
\label{a01}
A_{0}-B_1&=\log \frac{Z_{m_1-1,n_1-1}}{Z_{m_1,n_1}}-\log \frac{Z_{n-1,n-1}}{Z_{n,n}}\\
\label{a11}
A_1-B_1&= \left(2n_1 \int \log(4+\hat \delta-x){\rm d}\mathsf{MP}_y \right)-\left(2n \int \log(4+ \delta-x){\rm d}\mathsf{MP} \right),\\
\label{a21}
A_2-B_2&=\left(-n_1 \int x{\rm d}\mathsf{MP}_y\right)+\left(n \int x{\rm d}\mathsf{MP}\right)=c~\text{by \eqref{e:mpmean}},\\
\label{a31}
A_3-B_3&=2c \log (4+\hat \delta),\\
\label{a41}
A_4-B_4 &= -m_1 (4+\hat \delta)+n (4+ \delta)=0~\text{by definition of}~\hat \delta.
\end{align}

Our main job will be to control $A_1-B_1$ which is done using Theorem \ref{t:intest}, but first let us go about the easier tasks of controlling $A_0-B_0$ and $A_3-B_3$. 

\begin{lemma}
\label{l:0}
We have $A_0-B_0=c-\frac{6c^2}{n}+o(\frac{c^2}{n})$ as $\frac{c}{n}\to 0$. 
\end{lemma}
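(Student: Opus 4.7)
The plan is to express $A_0-B_0$ in closed form using the explicit product formula \eqref{e:partition} and then asymptotically expand. From \eqref{e:partition} one computes directly
\begin{equation*}
\log \frac{Z_{M-1,N-1}}{Z_{M,N}} = -\log\bigl[(N-1)!(M-1)!\bigr]+NM\log M-(N-1)(M-1)\log(M-1) =: F(M,N),
\end{equation*}
so that $A_0-B_0 = F(n+c,n-c)-F(n,n)$. While Lemma \ref{l:partition} controls $F$ only to additive precision $O(1)$, the target precision $o(c^2/n)$ demands an exact constant together with all $1/n$ and $c^2/n$ corrections. The two ingredients needed are a finer Stirling expansion and a Taylor expansion in $c/n$.

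First I would apply $\log k!=k\log k-k+\tfrac12\log(2\pi k)+\tfrac{1}{12k}+O(k^{-3})$ to $\log(N-1)!$ and $\log(M-1)!$, and rewrite $(N-1)(M-1)\log(M-1)=(N-1)(M-1)\log M+(N-1)(M-1)\log(1-1/M)$ using the series $\log(1-1/M)=-\sum_{k\geq 1}\tfrac{1}{kM^k}$. Collecting terms rewrites $F(M,N)$ in closed form as
\begin{equation*}
F(M,N) = (N-\tfrac12)\log(M/N) + (2N+M-1) - \log(2\pi) - \tfrac{1}{12N} - \tfrac{1}{12M} - \tfrac{N-1}{2M} - \tfrac{N-1}{6M^2} + O(1/n^2),
\end{equation*}
which is an elementary function of $(M,N)$ modulo the additive error.

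Next I would substitute $(M,N)=(n+c,n-c)$ and $(n,n)$ and expand the difference in $c/n$ using $\log\tfrac{n+c}{n-c}=\tfrac{2c}{n}+\tfrac{2c^3}{3n^3}+\cdots$. The dominant piece $(N-\tfrac12)\log(M/N)$ contributes $2c$ at leading order together with a $c/n$ correction and a $c^2/n$ correction. The linear piece $2N+M-1=3n-c-1$ contributes a $-c$ that cancels one of the $2c$'s, leaving the claimed leading $c$. Each of the subleading pieces $\tfrac{1}{12N}+\tfrac{1}{12M}$, $\tfrac{N-1}{2M}$ and $\tfrac{N-1}{6M^2}$ must be Taylor expanded around $c=0$; individually these contribute further $c/n$, $1/n$, $c^2/n^2$ and $c/n^2$ terms. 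The key verification is that all $c/n$ and $1/n$ corrections cancel exactly between $A_0$ and $B_0$, and that the surviving $c^2/n$ contributions combine to the stated coefficient.

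The main obstacle is purely bookkeeping, not conceptual: $c^2/n$ contributions arise from several distinct sources in the expansion (the main $(N-\tfrac12)\log(M/N)$ piece, the $1/(2M)$ Stirling-type piece, and the $\log(1-1/M)$ series), so one must keep enough orders in both Stirling and in the $c/n$ Taylor expansion to collect all of them and verify the lower-order cancellations. A useful safeguard is a numerical cross-check with a few small explicit $(n,c)$ before committing to the final constant.
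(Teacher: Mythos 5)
Your route---the exact formula \eqref{e:partition}, Stirling, and a Taylor expansion in $c/n$---is essentially the paper's own route; the paper simply quotes Lemma \ref{l:partition} (accepting an additive $O(1)$ error, which is harmless downstream because Proposition \ref{curvature} tolerates an $O(g(n))$ error) and then Taylor-expands $(n-c)\log(1-\tfrac{2c}{n+c})$, whereas you propose to track the Stirling constants exactly. That extra precision is legitimate (an $O(1)$ error is not literally $o(c^2/n)$ unless $c\gg\sqrt n$, which is the regime where the lemma is applied), and your closed form for $F(M,N)$ is correct at the orders that matter. One small slip in the plan: the $\tfrac{N-1}{2M}$ piece enters the difference at order $c/n$ (where it exactly cancels the $-c/n$ coming from $(N-\tfrac12)\log(M/N)$) and at order $c^2/n^2$, not at order $c^2/n$; the $\tfrac{1}{12N}$-type terms only contribute $O(c/n^2)$ and are irrelevant. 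But as written the proposal defers all of the decisive bookkeeping, so on its own it is a plan rather than a proof.

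The substantive problem is that the coefficient you intend to ``verify'' is not the one a correct execution produces. Indeed
\begin{equation*}
\Bigl(n-c-\tfrac12\Bigr)\log\frac{n+c}{n-c}=2c-\frac{2c^2}{n}-\frac{c}{n}+O\Bigl(\frac{c^3}{n^2}\Bigr),
\end{equation*}
and after adding the $-c$ from $2N+M-1$ and the cancelling $c/n$ contributions one finds $A_0-B_0=c-\frac{2c^2}{n}+o(c^2/n)$, not $c-\frac{6c^2}{n}$. The printed constant comes from a sign error in the paper's own proof, where $\log(1-x)$ with $x=\tfrac{2c}{n+c}$ is expanded as $-x+\tfrac{x^2}{2}$ instead of $-x-\tfrac{x^2}{2}$. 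The numerical safeguard you suggest would catch this immediately: for $n=10$, $c=2$ one computes $A_0-B_0\approx 1.21$, matching $c-\tfrac{2c^2}{n}=1.2$ rather than $c-\tfrac{6c^2}{n}=-0.4$. So your ``key verification'' step cannot close for the stated coefficient; carried out faithfully, your method proves the corrected statement with $-\tfrac{2c^2}{n}$. This only shifts the constant $\beta_{\delta}$ in Proposition \ref{curvature} (its positivity for each fixed $\delta>0$ survives, so the downstream argument is structurally unaffected), but as a proof of Lemma \ref{l:0} exactly as stated the proposal cannot succeed.
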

\begin{proof}Recall  Lemma \ref{l:partition} i.e.,
$\log \frac{Z_{m_1,n_1}}{Z_{m_1-1,n_1-1}}= -2n_1-m_1+n_1\log \frac{n_1}{m_1}+ O(1).$ Thus,

\begin{align*}
\log(\frac{Z_{m_1,n_1}}{Z_{m_1-1,n_1-1}})
&=-3n+c+ (n-c)\log (1-\frac{2c}{n+c})+O(1)\\
&=-3n+c+ (n-c)(-\frac{2c}{n+c}+\frac{1}{2}\left(\frac{2c}{n+c}\right)^2+O(\frac{c^3}{n^3}))\\
&=-3n+c+ (n-c)(-\frac{2c}{n+c}+\frac{1}{2}\left(\frac{2c}{n+c}\right)^2)+O(\frac{c^3}{n^2})\\
&=-3n-c +\frac{6c^2}{n}+O(\frac{c^3}{n^2}).
\end{align*}
\end{proof}
\begin{lemma}
\label{l:3}
We have $A_3-B_3= 2c\log (4+\delta)-\frac{2c^2}{n}+o(\frac{c^2}{n})$ as $\frac{c}{n}\to 0$. 
\end{lemma}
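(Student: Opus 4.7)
The plan is a direct Taylor expansion of $\hat\delta$ around $\delta$, followed by expanding the logarithm. The defining relation $(4+\delta)n = (4+\hat\delta)(n+c)$ gives the exact formula
\[
4+\hat\delta = \frac{(4+\delta)n}{n+c} = \frac{4+\delta}{1 + c/n}.
\]
So the first step is to expand $(1+c/n)^{-1} = 1 - c/n + (c/n)^2 + O((c/n)^3)$ to get
\[
4 + \hat\delta = (4+\delta)\bigl(1 - c/n + (c/n)^2 + O((c/n)^3)\bigr).
\]

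Next, I would take logarithms and use $\log(1+u) = u - u^2/2 + O(u^3)$ with $u = -c/n + (c/n)^2 + O((c/n)^3)$. Since $u^2 = (c/n)^2 + O((c/n)^3)$, the two quadratic contributions combine as $(c/n)^2 - (c/n)^2/2 = (c/n)^2/2$, yielding
\[
\log(4+\hat\delta) = \log(4+\delta) - \frac{c}{n} + \frac{1}{2}\Bigl(\frac{c}{n}\Bigr)^2 + O\Bigl(\Bigl(\frac{c}{n}\Bigr)^3\Bigr).
\]

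Finally, multiplying by $2c$ and using the identity $A_3 - B_3 = 2c\log(4+\hat\delta)$ from \eqref{a31},
\[
A_3 - B_3 = 2c\log(4+\delta) - \frac{2c^2}{n} + \frac{c^3}{n^2} + O\Bigl(\frac{c^4}{n^3}\Bigr).
\]
As $c/n \to 0$, the cubic remainder $c^3/n^2 = (c/n)\cdot (c^2/n)$ is $o(c^2/n)$, which gives the claimed expansion. There is no real obstacle here; this is a short computation whose only subtlety is keeping the expansion accurate to order $(c/n)^2$ rather than stopping at first order, since the assertion concerns the precise quadratic coefficient.
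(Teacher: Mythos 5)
Your proof is correct and follows essentially the same route as the paper: both expand $\log(4+\hat\delta)=\log(4+\delta)+\log\bigl(\tfrac{1}{1+c/n}\bigr)$ and multiply by $2c$ via \eqref{a31}. The only difference is that you carry the expansion to order $(c/n)^2$, which is harmless but unnecessary — the paper stops at $\log(4+\hat\delta)=\log(4+\delta)-\tfrac{c}{n}+o(\tfrac{c}{n})$, since the prefactor $2c$ already upgrades the $o(c/n)$ error to $o(c^2/n)$.
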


\begin{proof}
Observe that, by definition of $\hat \delta$ we have 
\begin{equation}
\log (4+\hat\delta)=\log (4+\delta) +\log \left(\frac{1}{1+\frac{c}{n}}\right)= \log (4+\delta)-\frac{c}{n}+o(\frac{c}{n})
\end{equation}
as $\frac{c}{n}\to 0$. The lemma follows.
\end{proof}

We now provide the remaining details of the proof of Theorem \ref{t:ratecompare}.
\begin{proof}[Proof of Theorem \ref{t:ratecompare}]
Observe that 
\begin{eqnarray*}
A_1-B_1 &=& 2n\left(\int \log(4+\hat \delta-x){\rm d}\mathsf{MP}_y - \int \log(4+ \delta-x){\rm d}\mathsf{MP} \right)\\
&-&2c\left(\int \log(4+\hat \delta-x){\rm d}\mathsf{MP}_y -\int \log(4+ \delta-x){\rm d}\mathsf{MP} \right)\\
&-& 2c  \int \log(4+ \delta-x){\rm d}\mathsf{MP}.
\end{eqnarray*}
Notice now that 
$$y=\frac{n-c}{n+c}=1-\frac{2c}{n}+ \frac{2c^2}{n^2}+O(\frac{c^3}{n^3})$$
as $\frac{c}{n}\to 0$. Using the notation from Theorem \ref{t:intest}, we have 
$$z=1-\sqrt{y}=\frac{c}{n} -\frac{c^2}{2n^2}+O(\frac{c^3}{n^3}).$$
Using this together with Theorem \ref{t:intest}, we get, as $\frac{c}{n}\to 0$ that
$$A_1-B_1=c\left(2A-2\int \log(4+ \delta-x){\rm d}\mathsf{MP}\right)+\frac{c^2}{n}{\left( -3A+2B\right)}+ o(\frac{c^2}{n}),$$
where $A$ and $B$ are as in Theorem \ref{t:intest}. Putting this together with \eqref{a21}, \eqref{a41}, Lemma \ref{l:0} and Lemma \ref{l:3} we get 
$$\sum_{i=0}^{4} (A_i-B_i)= \alpha_{\delta}c- \beta_{\delta} \frac{c^2}{n}+o(\frac{c^2}{n})$$
where $\alpha_{\delta}= 2+2\log(4+\delta)+2A-2\int \log(4+ \delta-x){\rm d}\mathsf{MP}=0$ by plugging in the value of $A$ from Theorem \ref{t:intest}; and, 
recalling $$A= -1+ \int \log (4+\delta-x)~d\mathsf{MP}-\log (4+\delta);$$
$$B=-{\frac12}-\frac{3}{2}\log (4+\delta)+ (\frac{1}{2}(2+\delta)+2) \int \frac{1}{4+\delta-x} d\mathsf{MP} +\int \log (4+\delta-x)~d\mathsf{MP}+ \int_0^{4} \frac{\log (4+\delta-x)}{2\pi \sqrt{x(4-x)}}~dx,$$  
we get 
\begin{eqnarray*}
-\beta_{\delta}&=&-6+ (-3A+2B)-2=-6-\int \log(4+ \delta-x){\rm d}\mathsf{MP}\\
&+&(6+\delta) \int \frac{1}{4+\delta-x} d\mathsf{MP} +2\int_0^{4} \frac{\log (4+\delta-x)}{2\pi \sqrt{x(4-x)}}~dx.
\end{eqnarray*}
Computation of standard integrals  show that $-\beta_0=-4.$
Simple analysis also shows that $-\beta_{\infty}=-5.$
Differentiation under integral sign in $\delta,$ shows that 
\begin{align*}
-\beta'_{\delta}&= -\int \frac{(6+\delta)}{(4+\delta-x)^2} d\mathsf{MP}+2\int \frac{1}{2\pi(4+\delta-x)\sqrt{x(4-x)}},\\
&= - \frac{(6+\delta)}{(4+\delta)^{3/2}\sqrt{\delta}}+\frac{1}{\sqrt{(4+\delta)\delta}}<0.
\end{align*}
\end{proof}

\bibliography{slowbond}
\bibliographystyle{plain}

\end{document}